\newtheorem{proposition}{Proposition}[section]
\newtheorem{definition}[proposition]{Definition}
\newtheorem{lemma}[proposition]{Lemma}
\newtheorem{theorem}[proposition]{Theorem}
\newtheorem{corollary}[proposition]{Corollary}
\newdimen\AAdi%
\newbox\AAbo%
\def\AArm{\fam0 }
\def\AAk#1#2{\setbox\AAbo=\hbox{#2}\AAdi=\wd\AAbo\kern#1\AAdi{}}%
\def\AAr#1#2#3{\setbox\AAbo=\hbox{#2}\AAdi=\ht\AAbo\raise#1\AAdi\hbox{#3}}%
\def\BBone{{\AArm 1\AAk{-.8}{I}I}}%
\newcommand {\CB}{{\cal B}}
\newcommand {\CC}{{\cal C}}
\newcommand {\CF}{{\cal F}}
\newcommand {\CG}{{\cal G}}
\newcommand {\CK}{{\cal K}}
\newcommand {\CL}{{\cal L}}
\newcommand {\CM}{{\cal M}}
\newcommand {\CO}{{\cal O}}
\newcommand {\CP}{{\cal P}}
\newcommand {\CV}{{\cal V}}
\newcommand{\disp}{\displaystyle}
\newcommand{\eps}{\varepsilon}
\newcommand{\8}{\infty}
\def\lu{\lambda^u}  
\def\ls{\lambda^s}
\def\m1{{-1}}
\newcommand{\ninf}{{n\rightarrow+\8}}
\newcommand{\ol}{\overline}
\def\s{\sigma}
\newcommand{\wh}{\widehat}
\newcommand{\wt}{\widetilde}
\newcommand{\inte}[1]{\stackrel{\circ}{#1}}
\newcommand{\N}{\Bbb{N}}
\newcommand{\R}{\Bbb{R}}
\newcommand{\Z}{\Bbb{Z}}
\def\picture #1 by #2 (#3){  
\vbox to #2{
\hrule width #1 height 0pt depth 0pt
\vfill
\special{picture #3} 
}}
\def\scaledpicture #1 by #2 (#3 scaled #4){{
\dimen0=#1 \dimen1=#2
\divide\dimen0 by 1000 \multiply\dimen0 by #4
\divide\dimen1 by 1000 \multiply\dimen1 by #4
\picture \dimen0 by \dimen1 (#3 scaled #4)}}
\theoremstyle{definition}
\newtheorem{remark}{Remark}
\def \wh {\widehat }
\def\K{\mathbb{K}}
\newcommand{\amsc}[1]{\medskip\noindent{\bf\small{AMSC}:\ }{#1}\\}
\newcommand{\keywords}[1]{\medskip\noindent{\bf \small{Keywords}:\ }{#1}\\}
\newenvironment{proofof}[1]{\medskip \noindent{\bf Proof of #1.}}{ \hfill\qed\\ }
\newcommand{\diam}{\mbox{diam\,}}
\renewcommand{\P}{\mathbb{P}}
\def\deriv#1#2{\frac{\partial#1}{\partial#2}}
\newcommand{\ie}{{\it i.e.} }
\newcommand{\eg}{{\it e.g.} }
\def\be{\beta}
\def\al{\alpha}
\begin{document}
\synctex=1

\title{ Thermodynamic formalism for Lorenz maps}
\author{Renaud Leplaideur \& Vilton Pinheiro}

\thispagestyle{empty}

\maketitle

\begin{abstract}
For a 2-dimensional map representing an expanding geometric Lorenz attractor we prove that the attractor is the closure of a union of as long as possible unstable leaves with ending points. This allows to define  the notion of good measures, those giving full measure to the union of these open leaves. 

Then, for any H\"older continuous potential we prove that there exists at most one relative equilibrium state among the set of good measures. 
Condition yielding existence are given. 
  \end{abstract}

\keywords{Lorenz attractor; equilibrium states; unstable manifold; Markov set; non-uniformly hyperbolic dynamical system; induction scheme.}
\amsc{37A30; 37A60; 37D25; 37D35; 37D45; 37E.}

\section{Introduction and statement of results}
\subsection{Background}

The Lorenz attractor was introduced in \cite{lorenz} to develop a simplified mathematical model for atmospheric convection. It turns out that this system of differential equations became a famous example of chaotic dynamical system. 
Perhaps one of the most spectacular fact, at least from the mathematical point of view, is that the true proof of the existence of the attractor is computer assisted (see \cite{tucker}) and, as far as we know, there is no other proof.

The system of differential equations actually generates a partially hyperbolic flow in $\R^{3}$. It has been extended (see \eg \cite{rovella}) to Lorenz-like attractors. There exists a large literature concerning their topological dynamical properties (see \eg \cite{arroyo-pujals, guckenheimer, guckenheimer-williams, hubbard-sparrow,luzzatto-pacaud, milnor, williams}).

\bigskip
The present paper focus on the ergodic properties, more precisely on the thermodynamic formalism. For this topic, the literature is less abundant, and usually focus on physical measures and/or more generally on conformal measures with respect to the $uc$-Jacobian (see \eg \cite{keller-stpierre,pacifico-todd}). Here we work with general H\"older continuous potentials. 

It is now a classical way to consider induction for studying the thermodynamic formalism for non-uniformly hyperbolic dynamical systems. Several relatively different methods exist (see \eg \cite{leplaideur1, pesin-senti, bruin-todd, pinheiro}). But for all them, the main problem is that they may exist measures which are not seen by the induction; therefore, it is always needed to check if the maximum obtained is the global one. In other words, liftable measures are ``good'' measures and it is hoped that they will be relevant with respect to the thermodynamic formalism.

Here also, we define a notion of ``good'' measures, and they turn out to be the ones which can be lifted by our induction scheme. However, and we guess this is a novelty, a good (ergodic) measure is not defined by this property but by the locus of its ergodic component. Actually, we describe the attractor and  identify a``good'' locus which much have full measure for any ``good'' measure (see Definition \ref{def-measuregood}). The ``good'' measures carry the relevant part of the dynamics\footnote{ in fact all the dynamics except the singularities}, and we prove here the uniqueness of the equilibrium state among the set of good measures (see Theorem D).

\bigskip
Our description of the ``good'' locus is a consequence of a sensibly different approach for the study of the dynamics. 
Indeed, a classical way to study Lorenz-like  attractors is to consider the return map into a transversal Poincar\'e section (see Fig. \ref{Fig-the map2dim}), and then, the associated one-dimension dynamics. 
In that approach, the existence of the unstable  direction and then the unstable leaves is a direct consequence of the global definition as a partially hyperbolic flow with an invariant splitting 
$$\R^{3}=E^{u}\oplus E^{c}\oplus E^{s}.$$
Our approach is here different. We start with a dynamics on a two-dimensional square (which should play the role of the Poincar\'e section) and recover the existence of the unstable direction (see Theorem A) and the unstable leaves. For that we use classical tools as the graph transform theory. As a by-product, we get the description of the ``good'' part of the attractor (see Theorem C) and then the definition of ``good'' measures.


\subsection{Settings}
%

The map $F$ maps $[-1,1]^{2}$ into itself. It is  a fibered map of the form $F(x,y):=(f(x),g(x,y))$. 
The map $f$ has a discontinuity at 0 and is defined by two branches. Each branch is an increasing $\CC^{1+}$-diffeomorphism  respectively from $[-1,0[$ onto $[v_{l},1[\varsubsetneq]-1,1[$ and from $]0,1]$ onto $]-1,v_{r}]\varsubsetneq]-1,1[$ with  $-1<v_{l}<0<v_{r}<1$ (see Figure \ref{Fig-the map1dim}).
It is expanding and for simplicity we assume that 
$$f'\ge\sqrt 2$$
holds\footnote{The same work could be done with $f'>1+\eps>1$.}. 
At $0$, $f$ is thus bi-valued: $f(0^{-})=1$ and $f(0^{+})=-1$.
In addition of being a discontinuity point, $0$ is also a \emph{non-flat} critical point for the derivative: $f'(x)$ is of the form $\disp \frac1{|x|^{1-\rho}}$ for $\rho\in(0,1)$  and $x$ close to 0. Consequently, $\lim_{x\to 0^{\pm}}f'(x)=+\8$. 

\begin{figure}[htbp]
\begin{center}
\includegraphics[scale=0.3]{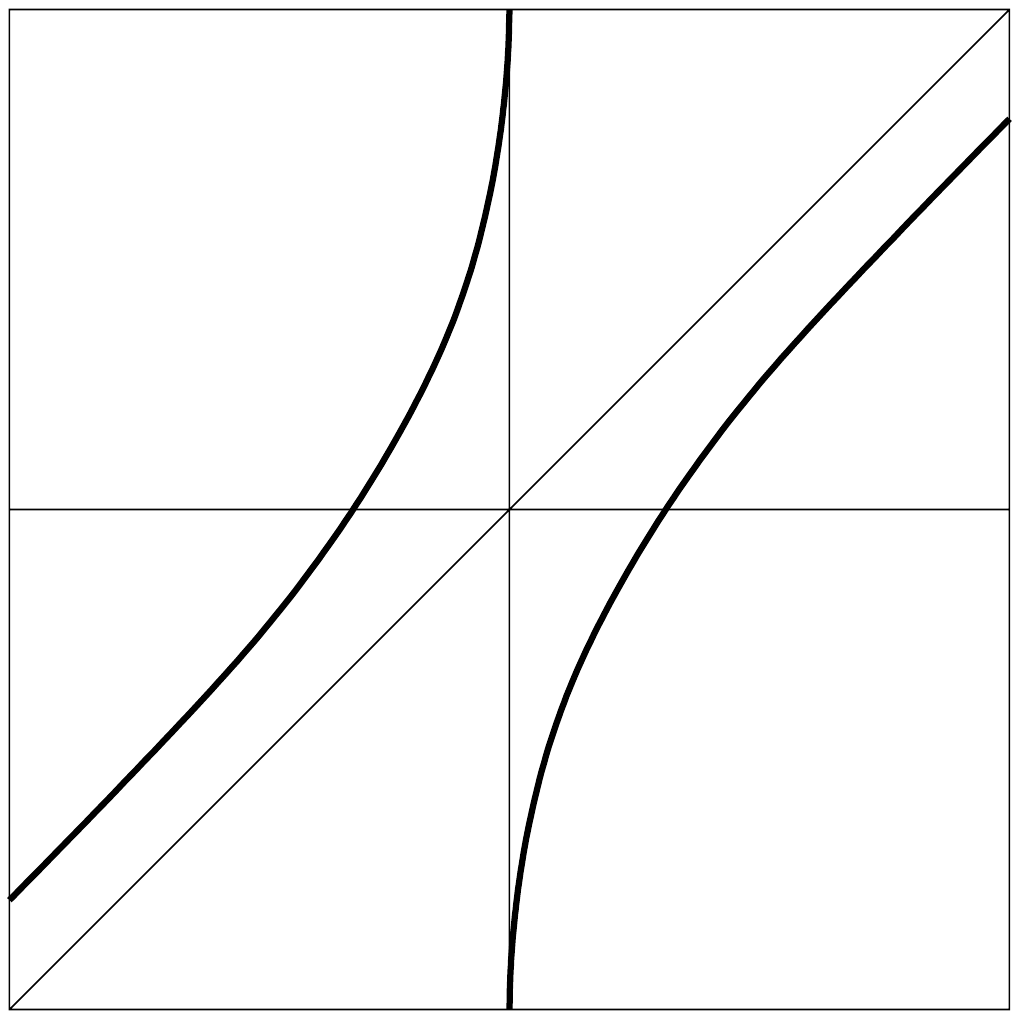}
\caption{1-dimensional map $f$}
\label{Fig-the map1dim}
\end{center}
\end{figure}

\medskip
The map $g$ is the continuation in $[-1,1]^{2}$ of a $\CC^{1+}$-map defined on $[-1,0)\cup(0,1]\times[-1,1]\to[-1,1]$. 
It satisfies $g(0^{-},y)=y_{+}>0$ and $g(0^{+},y)=y_{-}<0$, for every $y$, where $y_{\pm}$ are in $]-1,1[$. Moreover, $\disp\left|\frac{\partial g}{\partial y}\right|\le \frac12$ and there exists some positive real number $M$ such that $\disp\left|\frac{\partial g}{\partial x}\right|\le M$. 

Consequently, $F$ maps $[-1,1]^{2}$ strictly into itself, and maps each left and right part of the square onto two disjoint  ``triangles'' as on Figure \ref{Fig-the map2dim}. 
It is thus one-to-one, except on the vertical line $\{0\}\times[-1,1]$. 

\begin{figure}[htbp]
\begin{center}
\includegraphics[scale=0.4]{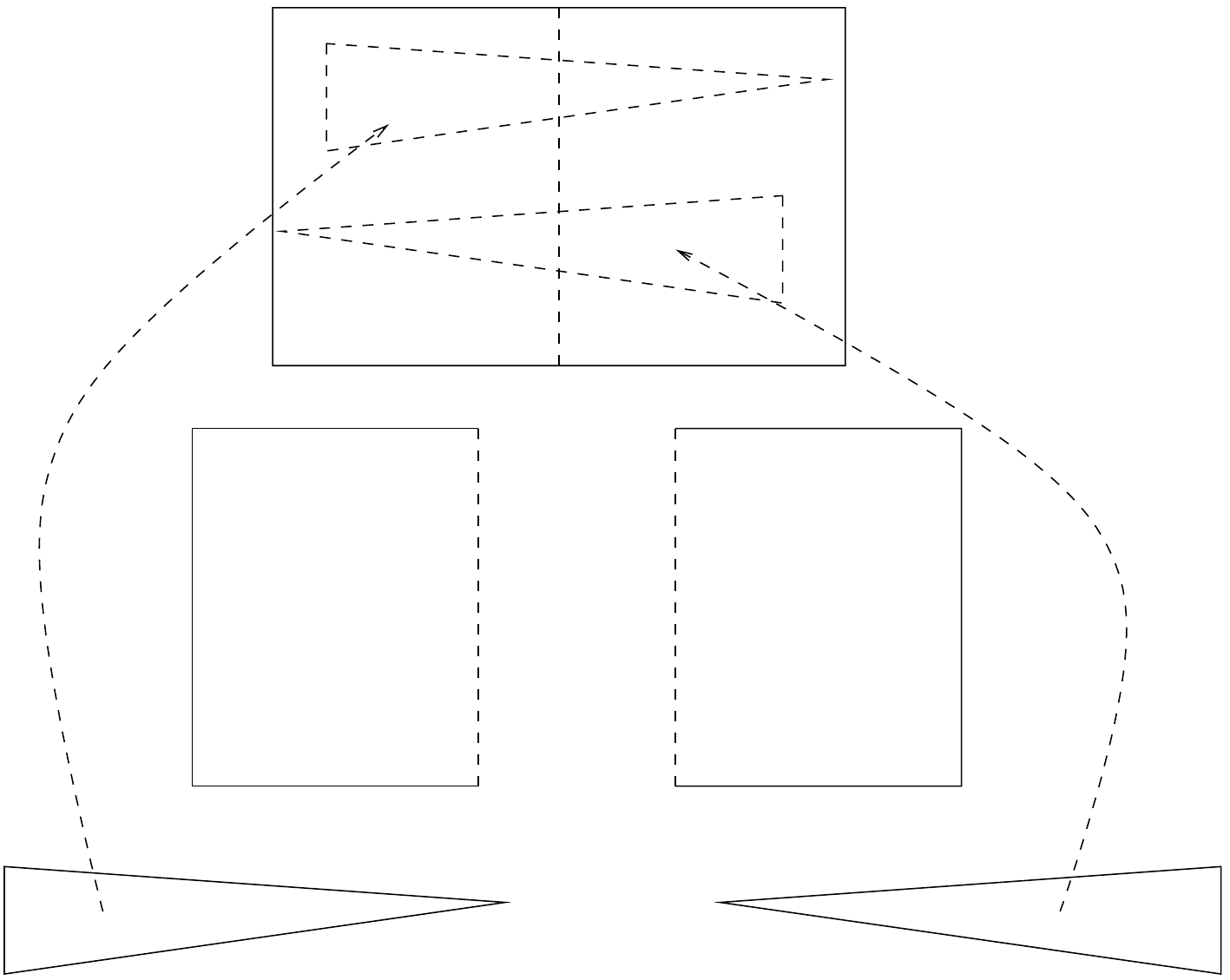}
\caption{2-dimensional map}
\label{Fig-the map2dim}
\end{center}
\end{figure}

The attractor is by definition the set $\Lambda:=\bigcap_{n\ge 0}F^{n}([-1,1]^{2})$. 
In $[-1,1]$ the point 0 is referred to as the {\em critical point} for the dynamics of $f$. The {\em post-critical orbits} are the respective forward orbits of $1$ and $-1$. They are denoted by $\CO_{f}^{+}(0)$.

\paragraph{Assumption.} {\bf In all the paper we assume that $0$ is neither periodic for $f$, nor pre-periodic.}

\medskip
In $\Lambda$ all the segment with first coordinate 0 is referred to as the {\em critical set}. This segment has two images by $F$, the point $(-1,y_{-})$ and the point $(1,y_{+})$. The forward orbits of these two points (by iteration of $F$) define the {\em post-critical orbits}. They are denoted by $\CO_{F}^{+}$. 

We emphasize that for $(x,y)\in\Lambda$, the preimages $F^{-n}(x,y)$ can be defined (for every $n$) only for points in $\Lambda\setminus\CO^{+}_{F}$.  

\medskip
\noindent
Our first results deal with hyperbolicity of $F$: 

\medskip
\noindent\noindent
{\bf Theorem A. }{\it For every $(x,y)\in \Lambda\setminus \CO_{F}^{+}$  there exists a  $DF$-invariant unstable direction $E^u(x,y)$.}

\medskip\noindent
As a by-product of Theorem A we recover the next result: 

\medskip\noindent{\bf Corollary B. }{\it Every invariant measure $\mu$ in $\Lambda$ is hyperbolic. The two Lyapunov exponents are $\disp\int\log\left|\frac{\partial g}{\partial y}\right|\,d\mu<0<\int\log\left|f'\right|\circ \pi_{1}\,d\mu\le +\8$, where $\pi_{1}(x,y)=x$. }

Corollary $B$ allows to talk about the stable Lyapunov exponent and the unstable Lyapunov exponent for an invariant measure $\mu$. They will be denoted by $\ls_{\mu}$ and $\lu_{\mu}$. 

\medskip

It is {\it a priori} not forbidden that a measure $\mu$ satisfies $\lu_{\mu}=+\8$. 
Few results exist in the literature concerning measures with infinite Lyapunov exponent. Our strategy is here to split the set of measures; on the one hand   we get the good measures and on the other hand theses measures difficult to control. The definition of good measure follows from the next property of $\Lambda$:

\medskip
\noindent\noindent
{\bf Theorem C. }{\it There exists an increasing sequence of compact sets, $\Lambda_{n}$ such that $\Lambda=\disp\ol{\cup \Lambda_{n}}$. The connected components of $\disp\cup \Lambda_{n}$ are  integral curves of the vector field $E^{u}$.

A point  belongs to $\Lambda_{n}$  if and only if it admits a (local) unstable manifold; this unstable manifold is then the connected component of $\disp\cup \Lambda_{n}$ containing the point.

Moreover, 
 for every ergodic $F$-invariant probability $\mu$, $\disp\mu(\cup\Lambda_{n})=$ 0 or 1. }

\medskip
We refer to Fig. \ref{fig-millefeuille&bananas} for a picture of the long integral curves of the unstable vector field. 

\begin{figure}[htbp]
\begin{center}
\includegraphics[scale=0.3]{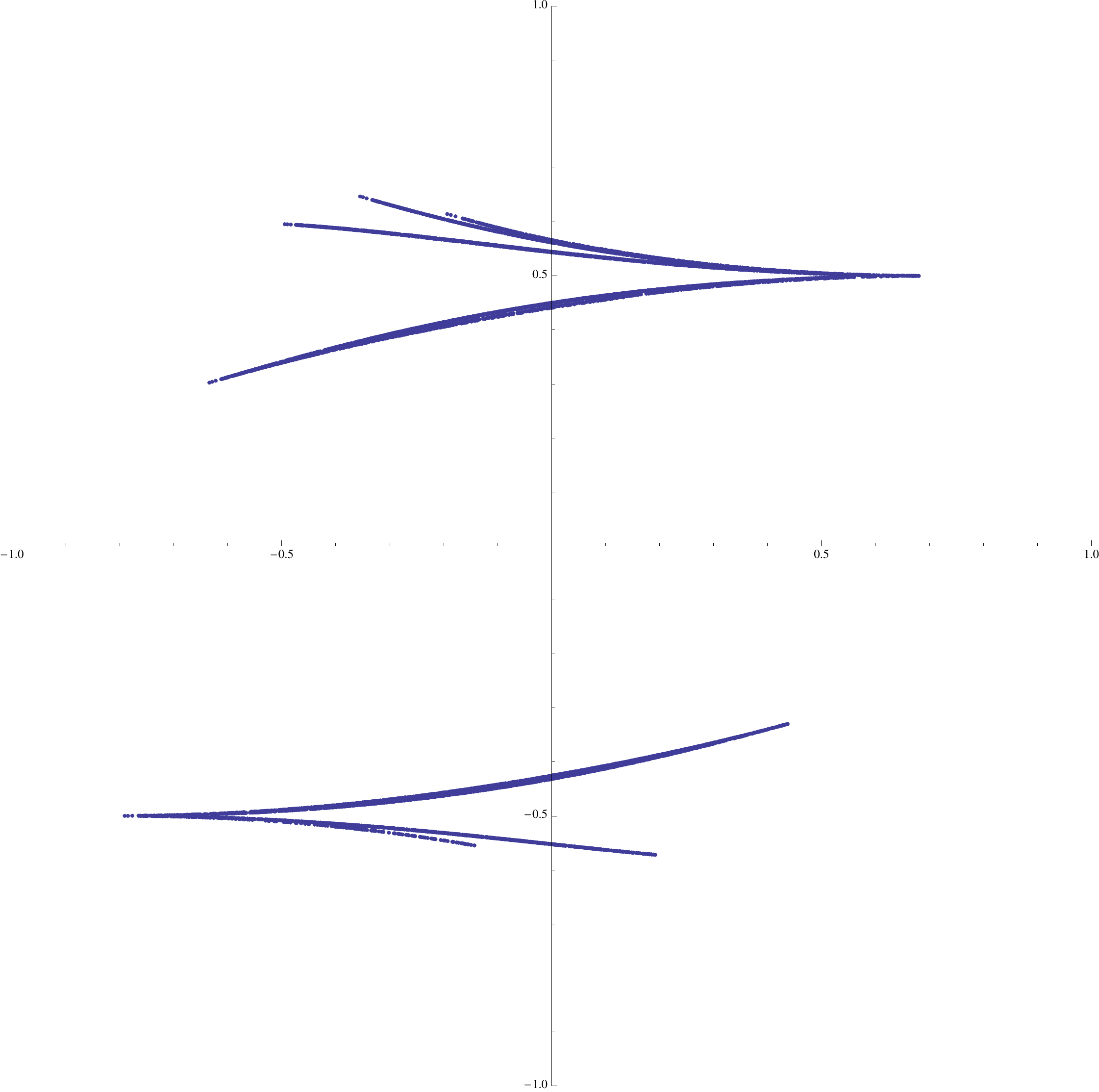}
\caption{The unstable curves and the beaks}
\label{fig-millefeuille&bananas}
\end{center}
\end{figure}

\medskip
\begin{definition}
\label{def-measuregood}
A $F$-invariant ergodic probability $\mu$ is said to be a good measure if $\disp\mu(\cup\Lambda_{n})=1$. 
\end{definition}

\begin{remark}
\label{rem-good-meas}
It is {\it a priori} not forbidden that a good measure has infinite unstable Lyapunov exponent. But due to Pesin Theory, every ``bad'' measure (\ie a measure which is not good) must have infinite unstable Lyapunov exponent.  
$\blacksquare$\end{remark}

Considering  $A_{0}:[-1,1]^{2}\to \R$ H\"older continuous, we recall that an \emph{equilibrium state} for the \emph{potential} $A_{0}$ is any $F$-invariant probability measure $\mu$ such that 
$$h_{\mu}(F)+\int A_{0}\,d\mu=\sup_{\nu}\left\{h_{\nu}(F)+\int A_{0}\,d\nu\right\},$$
where the supremum is taken over $F$-invariant probabilities. The value of the supremum is called the {\em pressure} of $A_{0}$ and shall be denoted by $\CP(A_{0})$. Expansiveness of the one-dimensional dynamics $f$ yields that the supremum is attained (see Corollary \ref{cor-entropy-scs}), hence, it is a maximum. The question is thus to study uniqueness of this equilibrium state. Nevertheless, due to potential existence of ``bad'' measures, even the existence among good measures is not guarantied. We address this problem now:

\begin{definition}
\label{def-relatequil}
A $F$-invariant ergodic good probability measure $\mu$ is said to be a relative equilibrium state among the set of good measures if 
$$h_{\mu}(F)+\int A_{0}\,d\mu=\sup_{\nu\ good}\left\{h_{\nu}(F)+\int A_{0}\,d\nu\right\}.$$
\end{definition}

We give partial answer to the question of uniqueness of the equilibrium state:

\medskip
\noindent\noindent
{\bf Theorem D. }{\it  For every H\"older potential $A_{0}:[-1,1]^{2}\to \R$, there exists at most one relative equilibrium state for $A_{0}$ among the set of good measures. }

\medskip\noindent
We refer the reader to Fig. \ref{fig-3cases} p. \pageref{fig-3cases} for existence or non-existence of the relative equilibrium state.

\subsection{Plan of the paper}

The paper proceeds as follows. In Section \ref{sec-thA} we prove Theorem A and Corollary B. The construction of the unstable direction follows from the construction of a unstable cone fields. 

In Section \ref{sec-thC} we prove Theorem C. As it is said above, we use tools from the Pesin theory to construct a local unstable manifold under the assumption that the point satisfies the \emph{past-stabilization property} (see Def. \ref{def-paststab}). Then we define the sets $\Lambda_{n}$ and prove that the connected component  of $\bigcup \Lambda_{n}$ are ``as long as possible'' unstable manifolds. 

In Section \ref{sec-inducedmap} we construct a nice set with the Markov property. It is called a mille-feuilles. Then, we adapt the theory of local thermodynamic formalism introduced in \cite{leplaideur1} and then developped  in further works of the author. 

In Section \ref{sec-thD} we finish the proof of Theorem D. We define the restricted notion of relative equilibrium state associated to a given mille-feuilles. We prove that the pressure is the relative maximum of the free energies among the good measures. Then we prove that there exists at most a unique equilibrium state and gives condition for existence. 

In Appendix \ref{sec-appen} we discuss some condition where the relative equilibrium state is also a global equilibrium state.


\section{Proofs of Theorem A and Corollary B}\label{sec-thA}


\subsection{Construction of an unstable cones field}
Let $\alpha$ be a positive real number such that
\begin{equation}\label{eq1-condi-alpha}
	\alpha M<\frac12.
\end{equation}
We define the cone field $\CC^u:=\{(u,v)\in \R^2,\ |u|\ge\alpha|v|\}$

\begin{lemma}\label{lem-cone-1}
There exists an integer $N=N(\alpha)$ such that for every $(x,y)\in\Lambda$ there exists $0<n\le N$ satisfying
$$DF^n(x,y).\CC^u\subset \CC^u.$$
\end{lemma}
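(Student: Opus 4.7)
My plan is to show that the lemma holds in fact with $N = 1$: at every $(x,y) \in \Lambda$ where $DF$ is defined (i.e.\ $x \ne 0$), the cone $\CC^u$ is already strictly forward-invariant under $DF(x,y)$. The condition $\alpha M < \frac12$ was imposed precisely so that this one-step inclusion holds, given the uniform lower bound $f' \ge \sqrt 2$.

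To carry this out I write the differential in coordinates. Since $F(x,y)=(f(x),g(x,y))$, one has
$$DF(x,y)(u,v)\;=\;\bigl(f'(x)\,u,\;\partial_x g(x,y)\cdot u+\partial_y g(x,y)\cdot v\bigr),$$
which I abbreviate as $(u',v')$. Fix $(u,v)\in\CC^u$, so that $|u|\ge\alpha|v|$, equivalently $|v|\le|u|/\alpha$. Then the first coordinate satisfies $|u'|=|f'(x)u|\ge\sqrt2\,|u|$, while by the triangle inequality together with $|\partial_x g|\le M$ and $|\partial_y g|\le\frac12$,
$$\alpha\,|v'|\;\le\;\alpha M\,|u|+\frac{\alpha}{2}\,|v|\;\le\;\alpha M\,|u|+\frac12\,|u|\;=\;\bigl(\alpha M+\tfrac12\bigr)|u|.$$

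Comparing these two bounds: since $\alpha M<\frac12$ forces $\alpha M+\frac12<1<\sqrt2$, the upper bound on $\alpha|v'|$ is strictly less than the lower bound on $|u'|$. Hence $\alpha|v'|<|u'|$, so $DF(x,y)(u,v)\in\CC^u$ with strict inclusion, and the lemma holds with $N=1$.

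The main ``obstacle'' is essentially non-existent: the argument is a one-line estimate that leverages only the block-triangular structure of $DF$ together with the hypotheses $f'\ge\sqrt2$ and $\alpha M<\frac12$. The one conceptual point is that the statement must be read at points where $DF$ is defined, since $DF$ does not make sense on the critical segment $\{0\}\times[-1,1]$; the lemma's more permissive phrasing with a general $N$ reflects flexibility (e.g.\ for the weaker expansion regime $f'>1+\varepsilon$) rather than necessity in the present setting.
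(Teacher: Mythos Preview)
Your proof is correct and considerably shorter than the paper's. The paper sets up an inductive scheme: it computes the slope of $DF^n(x,y)(u,v)$, introduces auxiliary quantities $R_n$ satisfying a recursion $R_{n+1}\le(\tfrac12+\alpha M)R_n+\tfrac{M}{2}$, and argues that since $\alpha M<\tfrac12$ the $R_n$ stay bounded while $(f^n)'(x)\ge(\sqrt2)^n$ blows up, forcing the cone inclusion after finitely many steps. But the very first step of this induction (the paper's inequality $(\ref{eq2-cones-fields})$: $|f'(x_0)|>|g_{0,x}|\alpha+|g_{0,y}|$) is \emph{always} satisfied, since $|g_{0,x}|\alpha+|g_{0,y}|\le \alpha M+\tfrac12<1\le f'$; so the whole recursive machinery is never actually invoked. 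Your observation that $N=1$ suffices is exactly this. The paper's argument appears to be written with an eye toward weaker hypotheses (cf.\ the footnote about $f'>1+\varepsilon$), but even there the same one--line estimate gives $N=1$ as long as $\alpha M+\tfrac12<1$. The only thing to note is that the paper extracts from its computation the quantitative bound $(\ref{equ1-defconemajor})$, used in the subsequent Remark to produce a smaller invariant cone $\widehat\CC^u_\kappa$; with your approach that remark becomes immediate with $\kappa=1/\alpha$, since one-step invariance already gives $DF^n(x,y).\CC^u\subset\CC^u=\widehat\CC^u_{1/\alpha}$ for all $n$.
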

\begin{proof}
Let $(x,y)$ be in $\Lambda$. We set for every $n\ge0$, $F^n(x,y):=(x_n,y_n)$.
We also set $\disp g_{n,x}:=\frac{\partial g}{\partial x}(x_n,y_n)$ and $\disp g_{n,y}:=\frac{\partial g}{\partial y}(x_n,y_n)$.
We pick $(u,v)$ in $\CC^u$.

The proof is done by induction. We get $\disp DF(x,y).
\left(\begin{array}{c}
	u\\v
\end{array}\right)=\left(\begin{array}{c}
	f'(x_0)u\\g_{0,x}u+g_{0,y}v
\end{array}\right).
$
Assuming the second component of this last vector is not zero the slope of the vector satisfies
\begin{equation}\label{eq1-cones-fields}
	\frac{|f'(x_0)u|}{|g_{0,x}u+g_{0,y}v|}\ge \frac{|f'(x_0)u|}{|g_{0,x}||u|+|g_{0,y}||v|}\ge \frac{|f'(x_0)|}{|g_{0,x}|\alpha+|g_{0,y}|}.\al,
\end{equation}
where we use $\alpha|v|\le |u|$ to get the last inequality.

If
\begin{equation}\label{eq2-cones-fields}
	\frac{|f'(x_0)|}{|g_{0,x}|\alpha+|g_{0,y}|}>1,
\end{equation}
then we get $DF(x,y).\CC^u\subset\CC^u$. If \eqref{eq2-cones-fields} is false, then we compute $DF^2(x,y)\left(\begin{array}{c}
	u\\v
\end{array}\right)=$.

We get
$$DF^2(x,y)\left(\begin{array}{c}
	u\\v
\end{array}\right)=\left(\begin{array}{c}
f'(x_1)	f'(x_0)u\\g_{1,x}f'(x_0)u+g_{1,y}g_{0,x}u+g_{1,y}g_{0,y}v
\end{array}\right).$$
Then we get
$$\frac{|f'(x_1)	 f'(x_0)u|}{|g_{1,x}f'(x_0)u+g_{1,y}g_{0,x}u+g_{1,y}g_{0,y}v|}\ge\frac{|f'(x_1)	 f'(x_0)|\alpha}{\alpha(|g_{1,x}||f'(x_0)|+|g_{1,y}||g_{0,x}|)+|g_{1,y}||g_{0,y}|}.$$

Again, if
\begin{equation}\label{eq3-cones-fields}
	\frac{|f'(x_1)	 f'(x_0)|}{\alpha(|g_{1,x}||f'(x_0)|+|g_{1,y}||g_{0,x}|)+|g_{1,y}||g_{0,y}|}>1,
\end{equation}
then we get $DF^2(x,y).\CC^u\subset \CC^u$.
If \eqref{eq3-cones-fields} is false, we set $R_0:=|g_{0,x}|$ and $R_1:=|g_{1,x}||f'(x_0)|+|g_{1,y}g_{0,x}|$. We use the fact that \eqref{eq2-cones-fields} is false and the bounds $|g_{i,x}|\le M$ and $|g_i,y|\le\frac12$ to obtain

\begin{equation}\label{eq4-cones-fields}
	R_1\le (\frac12+\alpha M)R_0+\frac12 M.
\end{equation}

Now, we iterate our process and compute a lower bound for the slope for $DF^3\left(\begin{array}{c}
	u\\v
\end{array}\right)=:(u_3,v_3)$. Note that \eqref{eq3-cones-fields} is false, thus we get
$$|f'(x_0)f'(x_1)|\le \alpha R_1+|g_{1,y}g_{0,y}|.$$
This yields
$$\frac{|u_3|}{|v_3|}\ge\alpha.\frac{|(f^3)'(x_0)|}{\alpha(|g_{2,y}|R_1+|g_{2,x}|(\alpha R_1+|g_{1,y}g_{0,y}|))+|g_{2,y}g_{1,y}g_{0,y}|}.$$
Setting $R_2:=|g_{2,y}|R_1+|g_{2,x}|(\alpha R_1+|g_{1,y}g_{0,y}|)$, we get
$$R_2\le (\frac12+\alpha.M)R_1+|g_{2,x}||g_{1,y}g_{0,y}|\le (\frac12+\alpha.M)R_1+\frac12M.$$
Iterating this process, as long as we do not get $DF^n(x,y).\left(\begin{array}{c}
	u\\v
\end{array}\right)\in\CC^u$, we can construct a sequence of terms $R_n$, satisfying
$$R_{n+1}\le (\frac12+\alpha.M)R_n+\frac{M}2.$$
Our assumption on $\alpha$ (see \eqref{eq1-condi-alpha}) yields that the sequence is bounded by some constant $K$. Now, we set $DF^n(x,y).(u,v)=:(u_n,v_n)$, and we let the reader check that we have
\begin{equation}
\label{equ1-defconemajor}
\frac{u_n}{v_n}\ge \alpha.\frac{(f^n)'(x_0)}{\alpha R_n+\left(\frac12\right)^n}\ge \alpha.\frac{(\sqrt2)^n}{\alpha K+1}.
\end{equation}
The last term in the right hand side goes to $+\8$ as $n$ increases. This proves that for some $N=N(\alpha)$ it is bigger than 1. The Lemma is proved.
\end{proof}

\begin{remark}
\label{rem-conemajor}
Note that inequality \eqref{equ1-defconemajor} implies that there exists some positive constant $\kappa$ such that for every $(\xi,\eta)\in \Lambda$ and for every $n$, $DF^n(\xi,\eta).\CC^{u}$ is included in the cone 
$$\wh\CC^{u}_{\kappa}:=\{(u,v)\in \R^2,\ |v|\le \kappa|u|\}.$$
$\blacksquare$\end{remark}

\begin{lemma}\label{lem-expansion-cone}
Let $(x,y)$ be in $\Lambda$ and ${\mathbf w}:=(u,v)$ be a vector in $\CC^u$. Then for every $n\ge 0$,
$$||DF^n(x,y).\mathbf{w}||\ge\frac{(\sqrt2)^n}{\sqrt{1+\frac1{\alpha^2}}}||\mathbf{w}||.$$
Moreover, for ${\mathbf w}:=(u,v)$  in $\wh\CC^u_{\kappa}$ and for every $n\ge 0$,
$$||DF^n(x,y).\mathbf{w}||\ge\frac{(\sqrt2)^n}{\sqrt{1+\kappa^2}}||\mathbf{w}||.$$

\end{lemma}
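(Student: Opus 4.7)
The proof should exploit the skew-product (block lower triangular) structure of $DF$ together with the two cone conditions.

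My plan is to first observe that since $F(x,y)=(f(x),g(x,y))$, the differential $DF(x,y)$ has the lower triangular form
$$DF(x,y)=\begin{pmatrix} f'(x) & 0 \\ \partial_x g(x,y) & \partial_y g(x,y) \end{pmatrix}.$$
The class of lower triangular matrices is stable under multiplication, so for any $n\ge 0$ the matrix $DF^n(x,y)$ is again lower triangular, and its $(1,1)$-entry equals $(f^n)'(x)$ (the $(1,2)$-entry is $0$). Consequently, for any $\mathbf{w}=(u,v)$, the first coordinate of $DF^n(x,y)\cdot\mathbf{w}$ equals $(f^n)'(x)\,u$, and hence
$$\|DF^n(x,y)\cdot\mathbf{w}\|\;\ge\;|(f^n)'(x)|\cdot|u|\;\ge\;(\sqrt 2)^n\,|u|,$$
using the standing hypothesis $f'\ge\sqrt 2$.

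Next I would translate the cone conditions into a lower bound on $|u|$ in terms of $\|\mathbf{w}\|$. If $\mathbf{w}\in\CC^u$, then by definition $|v|\le |u|/\alpha$, so
$$\|\mathbf{w}\|=\sqrt{u^2+v^2}\le |u|\sqrt{1+1/\alpha^2},$$
which gives $|u|\ge \|\mathbf{w}\|/\sqrt{1+1/\alpha^2}$. Plugging into the previous display yields the first inequality of the lemma. For the second inequality, if $\mathbf{w}\in\wh\CC^u_\kappa$, the analogous computation gives $\|\mathbf{w}\|\le |u|\sqrt{1+\kappa^2}$, hence $|u|\ge\|\mathbf{w}\|/\sqrt{1+\kappa^2}$, and combining with the bound on $\|DF^n\mathbf{w}\|$ gives the claim.

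There is essentially no obstacle here: the whole content of the lemma is the triangular algebra of $DF$ plus the trivial geometric fact relating the first coordinate of a vector to its norm inside a cone. The only point worth double-checking is the computation that the $(1,2)$-entry of $DF^n$ is indeed zero, which is an immediate induction from the block-triangular formula for products.
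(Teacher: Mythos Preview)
Your proof is correct and follows essentially the same route as the paper: use the lower-triangular form of $DF^n$ to get $\|DF^n\mathbf{w}\|\ge (\sqrt2)^n|u|$, then bound $\|\mathbf{w}\|$ in terms of $|u|$ via the cone condition. There is nothing to add.
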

\begin{proof}
We set $DF^n(x,y).\mathbf{w}:=(u_n,v_n)$. As the matrix of $DF^n$ is lower-triangular, we get
$$u_n=(f^n)'(x)u.$$
Moreover the Euclean norm $||DF^n(x,y).\mathbf{w}||$ is larger than $|u_n|$. This yields  $\disp ||DF^n(x,y).\mathbf{w}||\ge (\sqrt2)^n|u|$.

On the other hand, $\mathbf w$ belongs to $\CC^u$, hence $\disp ||\mathbf{w}||\leq \sqrt{1+\frac1{\alpha^2}}|u|$. The first part of the Lemma is proved. The second inequality is obtained in the same way. 
\end{proof}

\begin{lemma}\label{lem-open-cones}
    Let $(x,y)$ be in $\Lambda$ and $n$ be an integer. Then the angle of $DF^n(x,y).\CC^u$ is smaller than $\frac{C}{(2\sqrt2)^n}$ for some $C>0$.
\end{lemma}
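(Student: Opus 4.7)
The plan is to exploit the lower-triangular form of $DF^n(x,y)$. Since each $DF$ is lower-triangular and the product of lower-triangular matrices is lower-triangular, I will write
$$DF^n(x,y) = \left(\begin{array}{cc} (f^n)'(x) & 0 \\ a_n & b_n \end{array}\right),$$
with $b_n = \prod_{i=0}^{n-1} \frac{\partial g}{\partial y}(F^i(x,y))$. The standing hypotheses $f' \geq \sqrt 2$ and $|\partial g/\partial y| \leq 1/2$ then give $(f^n)'(x) \geq (\sqrt 2)^n$ and $|b_n| \leq (1/2)^n$, so the ratio $|b_n|/(f^n)'(x)$ is bounded by $1/(2\sqrt 2)^n$, i.e.\ exactly the target decay rate. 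Nothing needs to be said about $a_n$, which will cancel in the computation below.

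Next I parameterize $\CC^u$ by slope. Every nonzero vector in $\CC^u$ is a nonzero scalar multiple of $(1,s)$ for some $s \in [-1/\alpha, 1/\alpha]$. Applying $DF^n$ yields $\bigl((f^n)'(x),\, a_n + b_n s\bigr)$, so the image direction has slope
$$\sigma(s) := \frac{a_n + b_n s}{(f^n)'(x)}.$$
This is affine in $s$, so as $s$ sweeps $[-1/\alpha, 1/\alpha]$ the image slopes sweep out an interval of length exactly
$$\frac{2|b_n|/\alpha}{(f^n)'(x)} \;\leq\; \frac{2}{\alpha}\cdot \frac{1}{(2\sqrt 2)^n}.$$

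Finally I convert the slope range into an angular width. By Remark \ref{rem-conemajor}, $DF^n(x,y).\CC^u \subset \wh\CC^u_\kappa$, so every slope in the image lies in the bounded interval $[-\kappa, \kappa]$. The angle between two vectors with slopes $\sigma_1$ and $\sigma_2$ is $|\arctan \sigma_1 - \arctan \sigma_2|$, and since $(\arctan)'\leq 1$ on $\R$, the mean value theorem bounds this angle by $|\sigma_1 - \sigma_2|$. Combining with the previous step, the angular aperture of $DF^n(x,y).\CC^u$ is at most $2/(\alpha (2\sqrt 2)^n)$, and the lemma holds with $C := 2/\alpha$. The only substantive ingredient is the triangularity of $DF^n$; once that is noted, the rest is a direct computation, and I do not expect any real obstacle.
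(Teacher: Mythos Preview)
Your proof is correct and rests on the same two ingredients as the paper's: the lower-triangular form of $DF^n$, which gives vertical contraction by $(1/2)^n$ and horizontal expansion by at least $(\sqrt 2)^n$. The only difference is in packaging: the paper takes the two border vectors $\mathbf{w}_\pm$ of $\CC^u$, observes their difference is vertical and hence its image has norm $\le C/2^n$, invokes Lemma~\ref{lem-expansion-cone} to get $\|DF^n\mathbf{w}_\pm\|\ge (\sqrt 2)^n/\sqrt{1+1/\alpha^2}$, and finishes with a Thales-type triangle argument, whereas you parametrize the cone by slope, compute the image slope range $2|b_n|/(\alpha(f^n)'(x))$ directly from the matrix, and convert to an angle via the mean value theorem for $\arctan$. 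Your route is slightly more explicit and self-contained (it does not need Lemma~\ref{lem-expansion-cone}, and your citation of Remark~\ref{rem-conemajor} is in fact unnecessary since $(\arctan)'\le 1$ holds on all of $\R$); the paper's route is more geometric. Substantively they are the same argument.
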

\begin{proof}
We consider the two "border" vectors of $\CC^u$, $\mathbf{w_+}:=(1,\alpha)$ and $\mathbf{w_-}:=(1,-\alpha)$.

Note that $\mathbf{w_+}-\mathbf{w_-}=(0,2\alpha)$ and remember that the vertical direction is invariant by $DF$. Hence
$$||DF^n(x,y).(\mathbf{w_+}-\mathbf{w_-})||\le 2\alpha \frac1{2^n}.$$

On the other hand, $\CC^u$ is the set of vectors of the form $a\mathbf{w_+}+b\mathbf{w_-}$, with $ab\ge 0$. Then $DF^n(x,y).\mathbf{w_+}$ and $DF^n(x,y).\mathbf{w_-}$ are border vectors of $DF^n(x,y).\CC^u$. Lemma \ref{lem-expansion-cone} proves that their norm are greater than $\disp \frac{(\sqrt 2)^n}{\sqrt{1+\frac1{\alpha^2}}}$. Then, we use Thales theorem, the fact that the line joining the extremities of $DF^n(x,y).\mathbf{w_+}$ and $DF^n(x,y).\mathbf{w_-}$ is vertical, and that the axe of the cone $DF^{n}(x,y).\CC^{u}$ has slope lower than $\frac1\al$.
\end{proof}

\bigskip
Let us now prove Theorem A. We pick some point $(x,y)$ in $\Lambda\setminus \CO^{+}_{F}$. For a fixed $n$, we can apply Lemma \ref{lem-cone-1}. More precisely, we can apply Lemma \ref{lem-cone-1} by induction to get a sequence of points of the form $(x_{-n_k},y_{-n_k})$, with $0<-n_k+n_{k+1}\le N=N(\alpha)$ and
$$DF^{n_{k+1}-n_k}(x_{-n_{k+1}},y_{-n_{k+1}}).\CC^u\subset \CC^u.$$
Starting from $(x_{-n},y_{-n})$ we construct the sequence by induction and stop when the next point of the sequence is in the forward orbit of $(x,y)$. For the last point of the sequence, say $(x_{-n_0},y_{-n_0})$ we thus get $-n_0\in[-N+1,0]$. In that case we say that \emph{the sequence of hyperbolic jumps starting at $-n$ arrives at $-n_0$} (see Figure \label{fig-sequence cones}).
\begin{figure}[htbp]
\begin{center}
\includegraphics[scale=0.8]{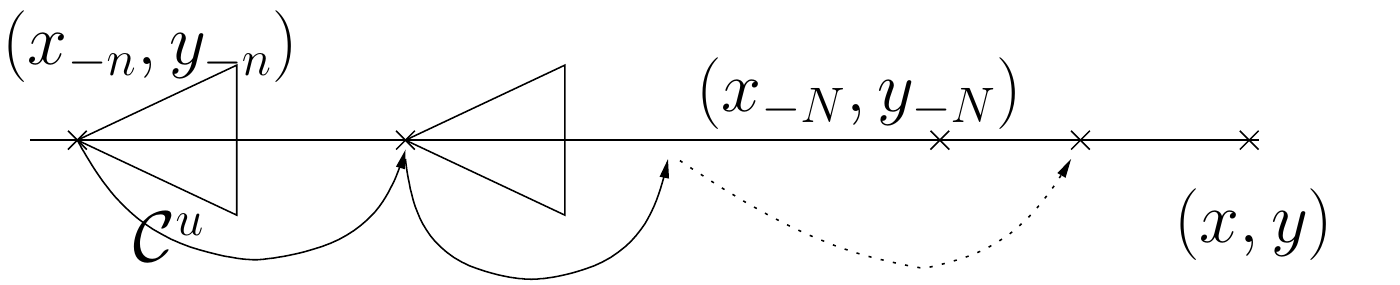}
\caption{Sequence of hyperbolic jumps}
\label{fig-sequence cones}
\end{center}
\end{figure}

Conversely, for $k\in [-N+1,0]$, we denote by $\N(k)$ the set of integers $n$ such that the sequence of hyperbolic jumps starting at $-n$ arrives at $k$.

\paragraph{Claim :} there exists at least one $k$ in $[-N+1,0]$ such that $\N(k)$ is infinite.

\medskip
This claim is a simple consequence of the fact that $\N$ is infinite and $[-N+1,0]$ is finite ! However, we point out that if $n>m$ are in $\N(k)$, this does not necessarily mean that the sequence of jumps starting at $(x_{-n},y_{-n})$ contains the point $(x_{-m},y_{-m})$. This is a source of difficulties we shall control to define the unstable direction. In particular, we emphasize that it is not clear at all that the images of the unstable cones  at $(x_{-k},y_{-k})$ form a decreasing sequence. 

\medskip
We consider some integer $k$ in  $[-N+1,0]$ such that $\N(k)$ is infinite. For $n$ in $\N(k)$ the cone $DF^{n-k}(x_{-n},y_{-n}).\CC^{u}$ seen in the projective space $\mathbb{P}\R^2$ is an interval $[a_{n},a_{n}+a.\theta^{n}]$ with $\theta\le  \frac1{(2\sqrt2)}$ and $a:=\sqrt{1+\frac1{\alpha^2}}$ (see Figure \ref{fig-cone-interval}). 
\begin{figure}[htbp]
\begin{center}
\includegraphics[scale=0.5]{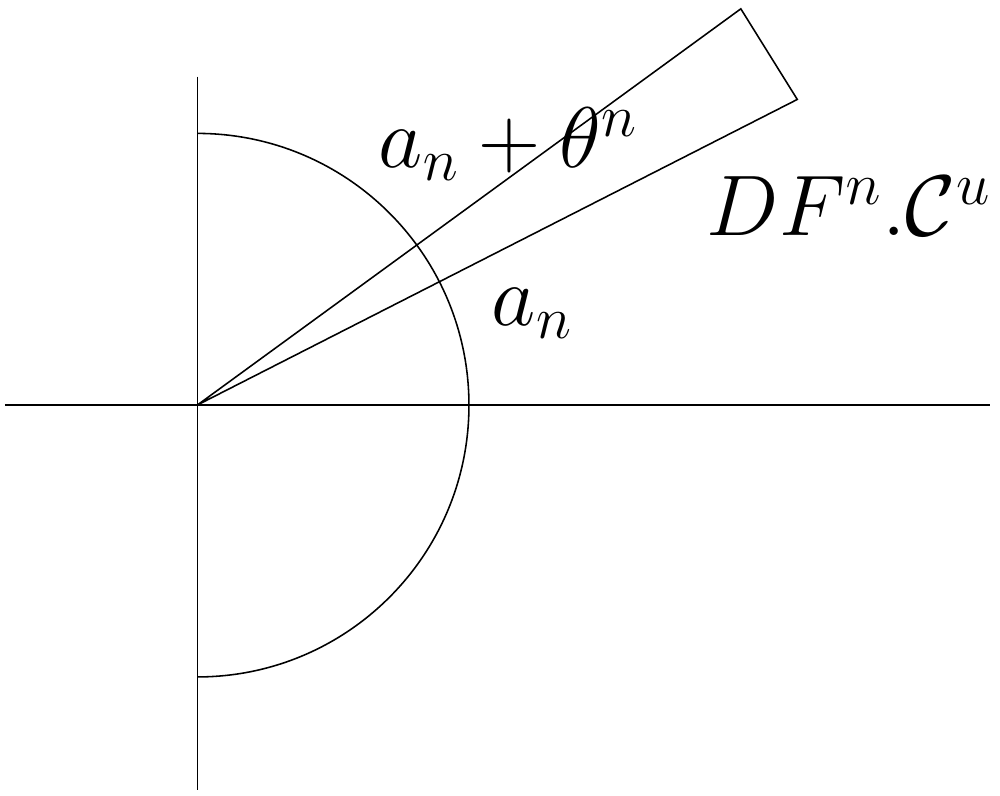}
\caption{Cones in the projective space}
\label{fig-cone-interval}
\end{center}
\end{figure}

We can thus define accumulation points for these segments. Now, we prove that all these accumulation points are equal. 

Note that such an accumulation point is a direction, and can thus been pushed forward in $T_{(x,y)}M=\R^2$. 
We consider two accumulation points, possibly for two different $k$'s in $\mathbb{P}\R^2$, say $\mathbf{e_{1}}$ and $\mathbf{e_{2}}$. We assume that $\mathbf{e_{1}}$ is the accumulation point for a sequence $(m_{j})$ and $\mathbf{e_{2}}$ is the accumulation point for the sequence $(n_{k})$. We pick $n$ very large. The sequence of cones $DF^{l-n}(x_{-l},y_{-l}).\CC^{u}$ converges, for $l=m_{1},m_{2},m_{3},\ldots$ to $DF^{-n}(x,y).\mathbf{e_{1}}$, and   for $l=n_{1},n_{2},n_{3},\ldots$ to $DF^{-n}(x,y).\mathbf{e_{2}}$. 

Note that by Remark \ref{rem-conemajor} both directions $DF^{-n}(x,y).\mathbf{e_{1}}$ and $DF^{-n}(x,y).\mathbf{e_{2}}$ are in the cone $\wh\CC^{u}_{\kappa}$. Choosing two representative vectors of these two directions which coincide on the first coordinate, and then applying $DF^{n}(x_{-n},y_{-n})$ to these two vectors, Lemma \ref{lem-expansion-cone} proves that the distance (in $\P\R^2$) between $\mathbf{e_{1}}$ and $\mathbf{e_{2}}$ is smaller than $\sqrt{1+\kappa^2}\frac1{(2\sqrt2)^n}$. This is true for every $n$, hence $\mathbf{e_{1}}=\mathbf{e_{2}}$.

\subsection{Proof of Corollary B}
Let $\mu$ be a $F$-invariant probability measure. We denote by $\nu$ the push-forward $\pi_{1,*}\mu$. It is a $f$-invariant probability measure on $[-1,1]$. 

\begin{lemma}
\label{lem-0-0mesure}
The measure $\nu$ satisfies $\nu(\{0\})=0$.
\end{lemma}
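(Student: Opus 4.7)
The plan is a short proof by contradiction. The fibered form $F(x,y)=(f(x),g(x,y))$ gives the semi-conjugacy $\pi_{1}\circ F=f\circ\pi_{1}$, which transports the $F$-invariance of $\mu$ into the $f$-invariance of $\nu=\pi_{1,*}\mu$, once any measurable choice is made for $f$ at the bi-valued point $0$ (say $f(0):=1$); the specific choice is harmless because the conclusion of the lemma is precisely that $\{0\}$ is $\nu$-negligible.

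Suppose then that $\nu(\{0\})=c>0$. By $f$-invariance, $\nu(f^{-n}(\{0\}))=c$ for every $n\ge 0$. The key step is to observe that the preimages $f^{-n}(\{0\})$ are pairwise disjoint: if $x\in f^{-m}(\{0\})\cap f^{-n}(\{0\})$ with $0\le m<n$, then $f^{m}(x)=0$ and $f^{n}(x)=0$, so composing yields $f^{n-m}(0)=f^{n-m}(f^{m}(x))=f^{n}(x)=0$. This makes $0$ a periodic point of $f$ (under whichever extension one picked at $0$), contradicting the standing hypothesis.

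We therefore obtain a countable family of pairwise disjoint measurable sets, each of $\nu$-mass $c>0$, which is impossible for a probability measure; hence $c=0$. An equivalent one-line argument is Poincar\'e recurrence applied to the singleton $\{0\}$: if $\nu(\{0\})>0$ its unique point $0$ must recur under $f$, i.e.\ $f^{n}(0)=0$ for some $n\ge 1$, contradicting non-periodicity. There is no real obstacle; the only subtlety is the ambiguity of $f$ at $0$, which is neutralised by the observation that the periodicity conclusion is insensitive to the value one assigns to $f(0)$.
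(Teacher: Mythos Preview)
Your proof is correct and essentially dual to the paper's. Both arguments exhibit countably many pairwise disjoint sets each carrying a fixed positive $\nu$-mass, but they build these sets in opposite directions. The paper pushes \emph{forward}: it uses an $\varepsilon$-neighbourhood argument (exploiting that each branch of $f$ is a homeomorphism on a one-sided neighbourhood of $0$) to transfer the atom at $0$ to an atom at $1$ (or $-1$), then observes that $\nu(\{f^k(1)\}) \ge \nu(\{1\})$ for every $k$ and invokes the standing non-pre-periodicity hypothesis to ensure the points $f^k(1)$ are all distinct. You pull \emph{backward}: after fixing a single-valued extension of $f$ at $0$, you use $\nu(f^{-n}(\{0\})) = \nu(\{0\})$ together with the non-periodicity of $0$ to conclude that the sets $f^{-n}(\{0\})$ are pairwise disjoint. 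Your route (and its Poincar\'e-recurrence one-liner) is the more direct of the two, at the cost of committing to a choice of $f(0)$ and checking that the semi-conjugacy $\pi_1\circ F=f\circ\pi_1$ transports invariance; the paper's $\varepsilon$-limit sidesteps the bi-valuation entirely by working only with the branch homeomorphisms.
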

\begin{proof}
Assume by contradiction that $\nu(\{0\})>0$. Then, for every $\eps>0$, $\nu(]-\eps,\eps[)>0$. For $\eps$ sufficiently small, $f$ is a bijection from $]-\eps,0^{-}]$ onto $]1-\rho(\eps),1]$ and from $[0^{+},\eps[$ onto $[-1,-1+\rho'(\eps)[$, with $\rho(\eps)$ and $\rho'(\eps)$ going to 0 as $\eps\to 0$. 

One (at least) of these two intervals has a $\nu$-measure bigger than $\disp\frac{\nu(0)}2$ for every $\eps$ (recall that $\nu$ is $f$-invariant). Say it is $]1-\rho(\eps),1]$. Doing $\eps\to0$ this shows $\nu(\{1\})>0$. Now, $\nu(\{f(1)\})$ is bigger than $\nu(\{1\})$ because $\nu$ is $f$-invariant. Doing this by induction, we prove $\nu(f^{k}(1))\ge \nu(1)>0$ for every $k$, and  $\nu$ should be infinite. This contradicts the fact that it is a probability.
\end{proof}

\begin{corollary}
\label{cor-entropy-scs}
The metric entropy $\mu\mapsto h_{\mu}$ is upper-semi continuous. 
\end{corollary}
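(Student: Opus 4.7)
My plan is to reduce the upper-semicontinuity of $\mu\mapsto h_{\mu}(F)$ to that of $\nu\mapsto h_{\nu}(f)$ on the space of $f$-invariant probabilities, and then to prove the latter by a Misiurewicz-type argument relying on the expansion of $f$ and on Lemma \ref{lem-0-0mesure}.

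For the reduction, I would establish the identity $h_{\mu}(F)=h_{\nu}(f)$ with $\nu=\pi_{1,*}\mu$. The inequality $h_{\mu}(F)\ge h_{\nu}(f)$ is automatic since $\pi_{1}\colon(\Lambda,F)\to([-1,1],f)$ is a factor. For the reverse inequality I would apply Bowen's factor formula: the fibers of $\pi_{1}$ carry zero topological entropy because the contraction $|\partial g/\partial y|\le\frac12$ yields $|y_{n}-y'_{n}|\le 2^{-n}|y-y'|$ for any two points in the same vertical fiber, so for every $\varepsilon>0$ the maximal $(n,\varepsilon)$-separated subset of a fiber has cardinality bounded uniformly in $n$. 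Lemma \ref{lem-0-0mesure} guarantees that the discontinuity line $\{0\}\times[-1,1]$ is $\mu$-null, hence the argument can be carried out on an $F$-invariant full-measure Borel set on which $F$ is continuous.

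For the upper-semicontinuity of $\nu\mapsto h_{\nu}(f)$, I would use the partition $\cp=\{[-1,0),(0,1]\}$. By Lemma \ref{lem-0-0mesure} together with $f$-invariance, $\nu(f^{-i}\{0\})=0$ for every $i\ge 0$ and every $f$-invariant $\nu$, so the boundary of each refinement $\cp^{k}=\bigvee_{i=0}^{k-1}f^{-i}\cp$ is $\nu$-null uniformly in the invariant measure. The hypothesis $f'\ge\sqrt{2}$ gives $\diam(\cp^{k})\le 2(\sqrt{2})^{-k}\to 0$, so $\cp$ is a one-sided generator and $h_{\nu}(f)=\inf_{k}\frac{1}{k}H_{\nu}(\cp^{k})$. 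Weak convergence $\mu_{n}\to\mu$ implies, by continuity of $\pi_{1}$, that $\nu_{n}:=\pi_{1,*}\mu_{n}\to\nu$ weakly, and since each atom of $\cp^{k}$ is an interval with $\nu$-null boundary, Portmanteau's theorem gives $H_{\nu_{n}}(\cp^{k})\to H_{\nu}(\cp^{k})$ for every fixed $k$. Hence $\limsup_{n}h_{\mu_{n}}(F)=\limsup_{n}h_{\nu_{n}}(f)\le\frac{1}{k}H_{\nu}(\cp^{k})$ for every $k$, and taking the infimum over $k$ yields $\limsup_{n}h_{\mu_{n}}(F)\le h_{\nu}(f)=h_{\mu}(F)$.

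The main obstacle is the reduction step: the discontinuity of $F$ along $\{0\}\times[-1,1]$ forbids a direct appeal to Bowen's formula in the continuous category. This is precisely what Lemma \ref{lem-0-0mesure} is designed to circumvent, by allowing the entire argument to be carried out on an $F$-invariant Borel set of full $\mu$-measure on which $F$ is continuous; once that is granted, the uniform $\frac12$-contraction in the fiber direction supplies the rest, and the Misiurewicz step is routine.
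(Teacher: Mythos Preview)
Your proof is correct and essentially parallels the paper's argument, though you organize it differently. The paper codes $([-1,1],f)$ by itineraries with respect to the partition $\{[-1,0),(0,1]\}$, obtains a subshift $\K\subset\{0,1\}^{\N}$, uses Lemma~\ref{lem-0-0mesure} to lift every $f$-invariant probability to $\K$, and then invokes upper-semicontinuity of entropy on the full shift. Your Misiurewicz argument with the generating partition $\cp$ is the same mechanism without the symbolic detour: the itinerary coding is precisely the $\cp$-coding, the shift is expansive for the same reason $\cp$ is a generator, and the role of Lemma~\ref{lem-0-0mesure} (nullity of cylinder boundaries) is identical in both versions.

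One point worth noting: the paper's written proof treats only the one-dimensional map $f$ and does not spell out the reduction $h_{\mu}(F)=h_{\pi_{1,*}\mu}(f)$, which you handle explicitly. Your appeal to Bowen's factor inequality is slightly delicate because of the discontinuity of $F$ along $\{0\}\times[-1,1]$; a cleaner route, avoiding any continuity hypothesis, is to observe directly that $\cp':=\pi_{1}^{-1}\cp$ is a two-sided generator for $(\Lambda,F,\mu)$ (forward refinements shrink horizontally by expansion of $f$, backward refinements shrink vertically by the $\tfrac12$-contraction of $g$), whence $h_{\mu}(F)=h_{\mu}(F,\cp')=h_{\nu}(f,\cp)=h_{\nu}(f)$ since $\bigvee_{i=0}^{k-1}F^{-i}\cp'=\pi_{1}^{-1}\bigl(\bigvee_{i=0}^{k-1}f^{-i}\cp\bigr)$. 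This yields the same conclusion without invoking Bowen's topological formula on a discontinuous map.
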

\begin{proof}
There is a canonical semi-conjugacy between a subshift in $\{0,1\}^{\N}$ and $([-1,1],f)$. For $x$ which is not a preimage of 0, $f^{k}(x)$ always belongs either to $[-1,0[$ or  to $]0,1]$. We associate to $x$ the itinerary coding, assigning 0 if $f^{k}(x)$ belongs to $[-1,0[$ and $1$ if it belongs to $]0,1]$. 

Itinerary of -1 and 1 are well defined because $0$ is not periodic. Then, we associate to 0 two codes, one which is 0-and the coding sequence of -1 and one which is 1 and the coding sequence of 1. 
 
If $x$ is a preimage of 0, its coding sequence is well determined up to it ``falls'' on 0. Then we just concatenate  any of the coding sequence of 0.

This yields a set of admissible codes say $\K\subset\{0,1\}^{\N}$. The inclusion is strict because, \eg, the codes $000\ldots$ or $111\ldots$ are not eligible. 
As $f$ is expanding,  the map which assigns to a code in $\K$ the associated point is one-to-one except on the codes mapped on preimages of $0\in [-1,1]$; it is actually two-to-one for preimages of 0 (including 0). Expansion also shows that the map is continuous, actually H\"older continuous.

The main consequence of Lemma \ref{lem-0-0mesure} is that any $f$-invariant probability can be lifted into $\K$ by a $\s$-invariant probability. Upper semi-coninuity of $h_{\mu}$ is $\{0,1\}^{\N}$ yields upper semi-continuity in $[-1,1]$. 
\end{proof}

Note that by Lemma  \ref{lem-0-0mesure}, for any $\mu$ $F$-invariant, $\mu$-almost every $(x,y)\in\Lambda$ has an unstable direction $E^{u}(x,y)$.

On the other hand, for such a point $(x,y)$,
$DF^{n}(x,y)$ is of the form 
\begin{equation}
\label{equ1-form-differentiel}
\left(\begin{array}{cc}(f^n)'(x) & 0 \\U_{n} & \disp\deriv{g^n}{y}\end{array}\right),
\end{equation}
where $\disp |U_{n}|\le C.\sum_{k=0}^{n-1}|(f^k)'(x)|$ for some constant $C$. This implies that the vertical direction is invariant. The asymptotic logarithmic expansion in that direction is  $\disp\int\log\left|\deriv{g}{y}\right|\,d\mu$. It is a negative number.

Moreover, computing the largest Lyapunov exponent $\lu_{\mu}$, we get for \emph{a.e.} $(x,y)$
$$\lu_{\mu}:=\lim_{\ninf}\frac1n\log||DF^{n}(x,y)||,$$
and \eqref{equ1-form-differentiel}  shows that $\disp\lu_{\mu}=\lim_{\ninf}\frac1n\log|(f^n)'(x)|$. By Birkhoff theorem, this last limit is $\disp\int\log|f'|\,d\nu$. 
Hence, Osseledec theorem insures that there exists a $DF$-invariant direction associated to this positive Lyapunov exponent. This direction must coincide with $E^{u}(x,y)$ (for $\mu$-\emph{a.e.} $(x,y)$) otherwise we would get three invariant directions in a 2-dimension space.

\section{Proof of Theorem C}\label{sec-thC}

\subsection{Construction of the unstable manifold}

Let $(x,y)$ be in $\Lambda$. For simplicity, we set $F^{-n}(x,y):=(x_{-n},y_{-n})$.

Pick an interval $I$ in $[-1,1]$. We consider the sequence of iterates of the interval, $f(I), f^{2}(I), \ldots, f^{k}(I)$. As long as these image intervals do not contain zero their image are still an interval (whose length increases in $k$). Hence, there eventually exists some minimal $k$ such that $f^{k}(I)\ni 0$. The image $f^{k+1}(I)$ is a union of two intervals, and we say that \emph{the interval is cut}.

\noindent{\bf Notation}: for simplicity we shall consider intervals $[x_{-n},0[$ but they could also denote $]0,x_{-n}]$ if $x_{-n}>0$.


\begin{definition}
\label{def-paststab}
Let $(x,y)$ be a point in $\Lambda\setminus \CO^{+}_{F}$.  
We say that a point $(x,y)$ has the past-stabilization property if 
there exist $\delta=\delta(x,y)>0$ and $N=N(x,y)$ such that for every $n\ge N$, 
 if  the iterates of interval, say  $f^{k}([x_{-n},0[)$ are never cuts, for $k\le n$, then the length of $f^{n}([x_{-n},0])$ is bigger than $\disp\frac\delta2$. 
\end{definition}

\paragraph{\bf Notation.} For $M=(x,y)$ in $\Lambda\setminus\CO^{+}_{F}$,  we call the inverse branches of $x$ following $M$ the sequence of points $(x_{-n})$ in the interval $[-1,1]$. They satisfied $f^{n}(x_{-n})=x$.

\medskip
We left it to the reader to check the following equivalent definition for the past-stabilization property:

\begin{definition}
\label{def-paststab2}
The point $M=(x,y)$ has the past-stabilization property if there exists some $\eps>0$ such that  the preimages of the interval $]x-\eps,x+\eps[$ following the inverse branches defined by $F^{-n}(x,y)$ are intervals which never contain 0.
\end{definition}

\begin{remark}
\label{rem-paststab}
This later definition shows that if $M$ has the past-stabilization property, then it also holds for $F^{-k}(M)$, $k\in N$.  
$\blacksquare$\end{remark}

If $\delta$ is a positive real number and $z$ in $[-1,1]$, $||z||_{\delta}$ denotes $\max(1,\disp\frac{|z|}{\delta})$.
\begin{definition}
\label{def-bsrp}
Let $(x,y)$ be a point in $\Lambda\setminus\CO^{+}_{F}$.  
We say that a point $(x,y)$ has the  backward slow recurrence property if for every $\eps>0$, exists $\delta>0$ such that 
$$\limsup_{\ninf}\frac1n\sum_{k=0}^{n-1}\left|\log ||\pi_{1}\circ F^{-k}(x,y)||_{\delta}\right|\le \eps.$$
In abridge way we shall say $(x,y)$ satisfies the b.s.r.p. or equivalently $(x,y)$ is backward slow recurrent (b.s.r. in abridge way).
\end{definition}

\begin{lemma}
\label{lem-bsrp-dilatboule}
Assume that $(x,y)$ is {\em b.s.r.}. Then, it has the past-stabilization property. \end{lemma}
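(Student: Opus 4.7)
The plan is to verify Definition \ref{def-paststab} directly, using only the uniform expansion $f'\ge\sqrt 2$ together with a pointwise decay estimate on $|x_{-n}|$ extracted from the b.s.r.\ hypothesis. I first transfer the averaged control of Definition \ref{def-bsrp} into a pointwise one: fix $\eps\in (0,\frac12\log 2)$ and let $\delta_0>0$ be given by Definition \ref{def-bsrp} applied to this $\eps$. Since each summand $|\log\|x_{-k}\|_{\delta_0}|$ is nonnegative, the single term at index $k=n-1$ is bounded above by the full partial sum, so
$$|\log\|x_{-(n-1)}\|_{\delta_0}|\le \sum_{k=0}^{n-1}|\log\|x_{-k}\|_{\delta_0}|\le (\eps+o(1))\,n.$$
Shifting the index and unpacking the definition of $\|\cdot\|_{\delta_0}$ yields a pointwise lower bound $|x_{-n}|\ge \delta_0\, e^{-\eps n-o(n)}$, valid for every $n\ge N_0$ with $N_0$ large.

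Next, I apply the expansion. Fix $n\ge N_0$ and assume the iterates $f^{k}([x_{-n},0[)$ are not cut for $k\le n$. Then $f^{n}$ is a single-valued $C^{1}$ map on the whole interval $[x_{-n},0[$, and every intermediate iterate lies in one branch of $f$; by the chain rule, $(f^{n})'(t)\ge (\sqrt 2)^{n}$ for every $t\in [x_{-n},0[$. Combining with the pointwise bound from the previous step,
$$\length\bigl(f^{n}([x_{-n},0[)\bigr)=\int_{x_{-n}}^{0}(f^{n})'(t)\,dt\ge (\sqrt 2)^{n}|x_{-n}|\ge \delta_0\,e^{n(\log\sqrt 2-\eps)-o(n)}.$$
Since $\log\sqrt 2-\eps>0$, the right-hand side exceeds $\delta_0/2$ for every $n\ge N_0$ (possibly after enlarging $N_0$). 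Taking $\delta:=\delta_0$ and $N:=N_0$ then verifies the past-stabilization condition.

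The only real obstacle is the first step---extracting a pointwise decay bound on $|x_{-n}|$ from the Cesaro average---and it is easily handled here because the summands are nonnegative, so the single-term bound follows directly from the partial-sum bound. Had the weights carried a sign, one would instead invoke a Borel--Cantelli-type argument to rule out infinitely many deep excursions of $(x_{-n})$ toward $0$. Everything else reduces to the uniform expansion $f'\ge\sqrt 2$ and the fundamental theorem of calculus; in particular, no bounded distortion estimate is required for this implication, because the assumption that no cut occurs already gives a uniform lower bound $(\sqrt 2)^n$ on $(f^n)'$ along the whole interval.
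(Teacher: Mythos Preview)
Your proof is correct. The underlying mechanism is the same as the paper's---relate the distance of $x_{-n}$ to $0$ to the length of $f^{n}([x_{-n},0[)$ via the expansion, and use the b.s.r.\ hypothesis to rule out $x_{-n}$ being too close to $0$---but your execution is tidier in two respects.

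First, the paper argues by contradiction: it assumes $|f^{n}([x_{-n},0[)|\le \delta/2$, works back to an upper bound on $|x_{-n}|$, and shows that this single term already forces the Ces\`aro average in the b.s.r.\ definition above $\eps$. You run the argument forward instead: nonnegativity of the summands gives the pointwise bound $|x_{-n}|\ge \delta_0 e^{-\eps n-o(n)}$ directly, and then the uniform expansion pushes the length past $\delta_0/2$. These are contrapositives of one another, but the direct version avoids the bookkeeping.

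Second, and more notably, the paper invokes the non-flat critical structure $f'(z)\asymp |z|^{-(1-\rho)}$ near $0$ to sharpen the upper bound on $|x_{-n}|$ (gaining a factor $1/\rho$ in the exponent), and accordingly restricts to $\eps<\tfrac14\log 2$. Your argument shows this is unnecessary for the present lemma: the uniform bound $f'\ge\sqrt2$ alone suffices once $\eps<\tfrac12\log 2$. So your proof is strictly more elementary and makes explicit that the implication b.s.r.\ $\Rightarrow$ past-stabilization does not depend on the singular behaviour of $f'$ at the critical point.
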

\begin{proof}
Pick $0<\eps<\frac14\log2$ and then pick $\delta$ associated as in Definition \ref{def-bsrp}.

Assume the Lemma is false. 
Then, for every $N$, there exists $n\ge N$  such that the interval $[x_{-n},0[$ or $]0,x_{-n}]$ is never cut and 
\begin{equation}
\label{equ1-bsrp-dilatboule}
|f^{n}(x_{-n})-f^{n}(0)|\le \frac\delta2.
\end{equation}
Here $f(0)$ is  either 1 or -1 depending on $x_{-n}$'s sign.

As $f$ is expanding, the last inequality certainly implies $|x_{-n}|<\frac\delta2(\sqrt2)^{n}$. However, we want a thinner estimate for $|x_{-n}|$.

We can always assume that $\delta$ is sufficiently small such that 
$$\forall \, z, |z|\le \frac\delta2\Longrightarrow|f'(z)|\ge \frac{C}{|z|^{1-\rho}},$$
with $C>0$ and $\rho\in ]0,1[$. 

Moreover, the norm of derivative is decreasing with respect to the distance to 0. This means that for every $z$ in $[x_{-n},0[$ (or $]0,x_{-n}]$), 
$$|f'(z)|\ge |f'(x_{-n})|.$$
For the rest of the orbit, we simply use expansion. Then, \eqref{equ1-bsrp-dilatboule} yields,
$$(\sqrt{2})^{n-1}C|x_{-n}|^{\rho}\le \frac\delta2,$$
which is equivalent to 
$$\frac1{n+1}\left|\log \frac{|x_{-n}|}{\delta}\right|\ge \frac{n-1}{2(n+1)\rho}\log2-\frac1{(n+1)\rho}\log\frac{\delta^{1-\rho}}{2}-\frac1{(n+1)\rho}\log C.$$
Note that $\disp \frac{|x_{-n}|}{\delta}=||x_{-n}||_{\delta}$. Then we get
$$\frac1{n+1}\sum_{k=0}^{n}\left|\log||x_{k}||_{\delta}\right|\ge \frac{n-1}{(n+1)\rho}\eps-\frac1{(n+1)\rho}\log\frac{\delta^{1-\rho}}{2}-\frac1{n+1}\log C.$$
As $n$ is as big as wanted, this is in contradiction with the {\em b.s.r.p.}.
\end{proof}

\begin{theorem}
\label{theo-var-instable}
If $(x,y)$ has the past-stabilization property, then it admits a local unstable manifold, $W^{u}_{loc}(x,y)$;  $W^{u}_{loc}(x,y)$ satisfies the following properties:
\begin{enumerate}
\item $W^{u}_{loc}(x,y)$ is a $\frac1\alpha$-Lipschitz continuous graph over an horizontal interval $]x-\eta(x,y)/2,x+\eta(x,y)/2[$ for some $\eta(x,y)>0$, and $\alpha$ is defined in Inequality \eqref{eq1-condi-alpha}.
\item The function $(x,y)\mapsto \eta(x,y)$ is Borel. 
\item For every $n$, $\pi_{1}\circ F^{-n}(W^{u}_{loc}(x,y))\not\ni0$ and $\eta(x,y)$ is maximal with this property. 
\end{enumerate}
\end{theorem}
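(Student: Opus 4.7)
The plan is to realize $W^{u}_{loc}(x,y)$ as a graph limit of pushforwards of horizontal segments taken at the backward orbit of $(x,y)$. Using the equivalent Definition \ref{def-paststab2}, let $\eta(x,y)>0$ be the supremum of those $\epsilon>0$ for which the preimages of $]x-\epsilon,x+\epsilon[$ along the inverse branches defined by $(x_{-n})_{n\ge 0}$ are intervals that avoid $0$ for every $n$; this supremum is attained by monotonicity of the preimages in $\epsilon$. Set $J:=\,]x-\eta(x,y)/2,x+\eta(x,y)/2[$ and let $J_n\ni x_{-n}$ denote its $n$-th such preimage, so $f^n|_{J_n}\colon J_n\to J$ is a diffeomorphism with $0\notin J_n$. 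Define the horizontal segment $H_n:=J_n\times\{y_{-n}\}$ and the curve $\gamma_n:=F^n(H_n)$; each $\gamma_n$ is smooth, passes through $(x,y)$, and satisfies $\pi_1(\gamma_n)=J$.

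Next I would establish the graph structure. Under our assumptions $\alpha M<1/2$ and $|\partial g/\partial y|\le 1/2$, the denominator $\alpha|g_{0,x}|+|g_{0,y}|$ appearing in \eqref{eq2-cones-fields} is strictly less than $1\le|f'(x_0)|/\sqrt 2$ at every smooth point, so in fact $DF\cdot\CC^u\subset\CC^u$ pointwise, in a single step. Since $(1,0)\in\CC^u$, all tangent vectors of $\gamma_n$ lie in $\CC^u$, which is exactly the statement that $\gamma_n$ is the graph of a $\frac{1}{\alpha}$-Lipschitz function $\phi_n\colon J\to[-1,1]$ with $\phi_n(x)=y$ for all $n$.

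The core of the argument is a uniform Cauchy estimate for $(\phi_n)$. For $x'\in J$, set $z_n:=(f^n|_{J_n})^{-1}(x')\in J_n$. Using $f(z_{n+1})=z_n$ and $F(z_{n+1},y_{-n-1})=(z_n,g(z_{n+1},y_{-n-1}))$, both $\phi_n(x')$ and $\phi_{n+1}(x')$ are second coordinates of $F^n$ applied to the two points $(z_n,y_{-n})$ and $(z_n,g(z_{n+1},y_{-n-1}))$, which share their first coordinate. Since $F$ preserves the common first coordinate on such pairs and contracts their vertical distance by a factor at most $1/2$, and since the initial vertical distance is $|g(x_{-n-1},y_{-n-1})-g(z_{n+1},y_{-n-1})|\le M|J_{n+1}|\le M\eta/(\sqrt 2)^{n+1}$ (using $|\partial g/\partial x|\le M$ and $|f'|\ge\sqrt 2$), one obtains
\begin{equation*}
\|\phi_n-\phi_{n+1}\|_{\infty,J}\le\frac{M\,\eta}{(\sqrt 2)^{n+1}\,2^n}.
\end{equation*}
This is summable, so $(\phi_n)$ converges uniformly on $J$ to a $\frac{1}{\alpha}$-Lipschitz function $\phi_\infty$; I then set $W^{u}_{loc}(x,y):=\{(x',\phi_\infty(x')):x'\in J\}$. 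Every point of this graph has a full backward $F$-orbit inside $\bigcup_n J_n\times[-1,1]\subset[-1,1]^2$, hence belongs to $\Lambda$.

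Properties (1) and (3) are then immediate from the construction, with maximality of $\eta$ coming directly from its definition as a supremum. For Borel-measurability, the superlevel set $\{\eta>c\}$ can be expressed as $\bigcup_{q\in\mathbb{Q}\cap(c,\infty)}\bigcap_{n\ge 0}\{(x,y):\,0\notin J_n^{(q/2)}(x,y)\}$, where $J_n^{(q/2)}$ denotes the $n$-th preimage of $]x-q/2,x+q/2[$; each of these conditions is Borel because the relevant inverse branches vary continuously on $\Lambda\setminus\CO^{+}_{F}$. The main obstacle is the Cauchy estimate, and specifically the uniform bound $|J_n|\le\eta/(\sqrt 2)^n$: this is exactly what past-stabilization secures, for otherwise the $J_n$ could accumulate near $0$ where the inverse branches of $f^n$ fail to contract at rate $\sqrt 2$, and the geometric bound on $\|\phi_n-\phi_{n+1}\|$ would collapse.
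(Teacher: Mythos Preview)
Your argument is correct and follows essentially the same scheme as the paper's proof: push forward horizontal segments from the backward orbit, use the unstable cone to get the $\frac{1}{\alpha}$-Lipschitz bound, and use vertical contraction for uniform convergence of the graphs. Your version is in fact a bit more streamlined than the paper's: by working directly with Definition~\ref{def-paststab2} you bypass the cut-tracking the paper does starting from fixed-width segments $]x_{-n}-\frac{\delta}{2},x_{-n}+\frac{\delta}{2}[$, and your observation that under the standing hypotheses $\alpha M<\frac12$, $|\partial g/\partial y|\le\frac12$, $|f'|\ge\sqrt2$ one actually has $DF\cdot\CC^u\subset\CC^u$ in a \emph{single} step is correct and makes the Lipschitz claim immediate. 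Your explicit Cauchy estimate $\|\phi_n-\phi_{n+1}\|_\infty\le M\eta/((\sqrt2)^{n+1}2^n)$ is also right; the paper only appeals qualitatively to vertical contraction at this point.

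Two small corrections. First, there is a factor-of-two slip: you define $\eta(x,y)$ as the supremum of the \emph{half-widths} $\epsilon$, but then set $J=\,]x-\eta/2,x+\eta/2[$, so your $J$ has half-width $\eta/2$, not the maximal half-width $\eta$. With this convention property~(3) (maximality) is not quite achieved; either set $J=\,]x-\eta,x+\eta[$ or define $\eta$ as twice the supremum. Second, your closing paragraph misidentifies the role of past-stabilization. The bound $|J_n|\le\eta/(\sqrt2)^n$ is automatic from $|f'|\ge\sqrt2$ and needs no hypothesis; near $0$ the inverse branches contract \emph{more}, not less, since $f'$ blows up there. What past-stabilization actually secures is that $\eta(x,y)>0$, i.e.\ that the intervals $J_n$ exist at all as connected intervals avoiding $0$; without it there is no interval $J$ to start from. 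This does not affect the validity of your proof, only the commentary.
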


\begin{remark}
\label{rem-lambdaWu}
The Theorem shows that if $M=(x,y)$ belongs to $\Lambda\setminus\CO^{+}_{F}$, then $W^{u}_{loc}(x,y)\subset \Lambda\setminus\CO^{+}_{F}$. 
$\blacksquare$\end{remark}

%
%
%

\begin{proofof}{Theorem \ref{theo-var-instable}}
 We consider $\delta=\delta(x,y)$ as in Lemma \ref{lem-bsrp-dilatboule}. For every $n$, we consider the horizontal segment $]x_{-n}-\frac\delta2,x_{-n}+\frac\delta2[\times\{y_{-n}\}$.

 We study each half of this interval in a similar way. 
 The map $F$ is expansive. The lengths of intervals $F^{k}\left(\left[x_{-n},x_{-n}+\frac\delta2\right[\right)$  increase in $k$, unless one interval is ``cut''  by the discontinuity at 0 (\ie one of the interval contains 0). 
 If the intervals are not cut, expansivity shows that the image interval overlaps all the half-ball $[x,x+\frac\delta2[$.

If one interval is cut, say at some iterates $k$, then the new interval to consider is of the form $[x_{-k},0[$ or $]0,x_{-k}]$. If the images of this interval are never cut, then 
Lemma \ref{lem-bsrp-dilatboule} shows that if $k>N(x,y)$ the final image interval has length bigger than $\frac\delta2$. If not, we induces this process until we get an interval say $[x_{-j},0[$ with $j\le N(x,y)$. But then, we only have to consider a finite number of intervals, and can consider the intersection of the connected components  of  the $f^{j}([x_{-j},0[)$'s which contain $x$.

We do this for both side (the left hand side and the right hand side) of $x$ considering the $F^{k}\left(\left[x_{-n}-\frac\delta2,x_{-n}\right]\right)$'s.

This gives some positive $\eta$ such that for every $n$, 
the connected component  which contains $(x,y)$ of the image by $F^{n}$ of $]x_{-n}-\frac\delta2,x_{-n}+\frac\delta2[\times\{y_{-n}\}$ is a graph (at least) over the  interval $]x-\frac\eta2,x+\frac\eta2[$, say  of a map $\varphi_{n}$.

Contraction in the vertical direction, and the fact that the map $F$ has a fibered structure ($F(x,y)=(f(x),g(x,y))$ and thus maps verticals into verticals) show that the sequence of graphs $(\varphi_{n})$ converge uniformly to a limit graph, say $\varphi_{u,x,y}$

Uniform expansion and contraction yield (this is a standard computation) that these maps $\varphi_{n}$ are all $\frac1\alpha$-Lipschitz continuous because they must have slope in the unstable cone field.

 It remains to prove that $\eta$ is a Borel map. This follows from the fact that for each $(x,y)$, there exists $n$, such that 
$\eta/2=\min_{k\le n}\left\{\left|f^{k}(x_{-k})-f^{k}(0)\right|\right\}$.

By construction, for every $n$, $\pi_{1}\circ F^{-n}(graph(\varphi_{u,x,y}))$ is never cut by iterating $f^{k}$ ($k\le n$). 
We can then consider the maximal $\eta$ with this property. It is again a Borel map. 
\end{proofof}

By construction,  the $\eta$ is maximal. This means that on the left or on the right there is a cutting point, \ie a point whose first coordinate is an image of 0. At that point the unstable leaf stops or has a ``{\it beak}''.

\subsection{Proof of Theorem C}

We define $\Lambda_{n}$ as the set of points  $M=(x,y)$ in $\Lambda$ whose unstable local manifold contains a graph over the interval $]x-\frac1n,x+\frac1n[$. 

\begin{lemma}
\label{lem-lambdancompact}
The set $\Lambda_{n}$ is compact. 
\end{lemma}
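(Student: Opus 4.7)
Since $\Lambda$ is compact, as the decreasing intersection of the compact sets $F^{n}([-1,1]^{2})$, it suffices to prove that $\Lambda_{n}$ is closed in $\Lambda$. Let $(p_{k} = (x_{k}, y_{k}))_{k}$ be a sequence in $\Lambda_{n}$ converging to $p = (x, y) \in \Lambda$; the goal is to build a local unstable manifold through $p$ of width at least $2/n$ as a uniform limit of the $W^{u}_{loc}(p_{k})$ and then invoke the maximality of $\eta$ in item~(3) of Theorem~\ref{theo-var-instable} to conclude $p \in \Lambda_{n}$.

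By item~(1) of Theorem~\ref{theo-var-instable}, each $W^{u}_{loc}(p_{k})$ is the graph of a $\frac{1}{\alpha}$-Lipschitz function $\varphi_{k} : \,]x_{k} - 1/n, x_{k} + 1/n[ \,\to [-1, 1]$. The family $(\varphi_{k})$ is uniformly bounded and equi-Lipschitz, so by Arzel\`a--Ascoli combined with a diagonal extraction along an exhausting sequence of compact subintervals of $]x - 1/n, x + 1/n[$, a subsequence converges uniformly on each such compact subinterval to a $\frac{1}{\alpha}$-Lipschitz limit $\varphi$ with $\varphi(x) = y$. Let $\Gamma := \mathrm{graph}(\varphi)$; closedness of $\Lambda$ and the inclusion $\mathrm{graph}(\varphi_{k}) \subset \Lambda$ yield $\Gamma \subset \Lambda$.

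To identify $\Gamma$ with a subset of $W^{u}_{loc}(p)$, one checks that for every $m \geq 0$, $J^{m} := \pi_{1} \circ F^{-m}(\Gamma)$ is a well-defined open interval not containing $0$. Fix $m$ and pass to a further subsequence so that all the $p_{k}$'s share the same length-$m$ itinerary under the coding introduced in the proof of Corollary~\ref{cor-entropy-scs} (possible since the alphabet is finite). Along this common itinerary, the corresponding branch of $F^{-m}$ is a smooth contraction, the interval $J_{k}^{m} := \pi_{1} \circ F^{-m}(\mathrm{graph}(\varphi_{k}))$ satisfies $0 \notin J_{k}^{m}$ by item~(3) applied to $p_{k}$, and $f^{m}(J_{k}^{m}) = \,]x_{k} - 1/n, x_{k} + 1/n[$. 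Uniform convergence $\varphi_{k} \to \varphi$ and continuity of the selected inverse branch give Hausdorff-convergence $J_{k}^{m} \to J^{m}$, so $J^{m}$ is open and does not contain $0$. A second diagonal extraction across $m \geq 0$ combines everything into a single subsequence realizing $\Gamma \subset W^{u}_{loc}(p)$, hence $p \in \Lambda_{n}$.

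I expect the main obstacle to be ruling out that $\pi_{1} \circ F^{-m}(p_{k})$ drifts to $0$ for some $m$, which would place $p$ in the post-critical orbit $\mathcal{O}^{+}_{F}$ and make $p$ a beak where, in the framework of Theorem~\ref{theo-var-instable}, a two-sided unstable manifold cannot exist. This is controlled by combining the uniform width lower bound $\eta(p_{k}) \geq 2/n$ with the blow-up estimate $|f'(z)| \geq C / |z|^{1-\rho}$ near $0$ and the baseline expansion $|f'| \geq \sqrt{2}$ away from $0$: tracking the intermediate intervals $f^{j}(J_{k}^{m})$ for $0 \leq j \leq m$, a drift of $\pi_{1} \circ F^{-m}(p_{k})$ to $0$ would pin all previous iterates against the critical point, in contradiction with the uniform-in-$k$ requirement that $|f^{m}(J_{k}^{m})| = 2/n$ together with $0 \notin J_{k}^{m}$ imposed by item~(3).
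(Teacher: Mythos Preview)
Your approach parallels the paper's in using that the pulled-back intervals $J_k^m$ avoid $0$, but it is considerably more convoluted. The paper skips Arzel\`a--Ascoli entirely: rather than building a limit curve $\Gamma$ and identifying it a posteriori as an unstable manifold, it directly verifies that $p = (x,y)$ satisfies the past-stabilization property (Definition~\ref{def-paststab2}), after which Theorem~\ref{theo-var-instable} supplies $W^{u}_{loc}(p)$. The key $\eps$-trick is: for $\eps > 0$ and $k$ large, $]x - 1/n + \eps, x + 1/n - \eps[\, \subset \,]x_k - 1/n, x_k + 1/n[$, and since the inverse branches of $p$ and $p_k$ agree up to any fixed depth $j$ once $k$ is large, the smaller interval's $j$-th preimage (following $p$) avoids $0$. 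Letting $\eps \to 0$ gives $\eta(p) \ge 2/n$. No subsequences or diagonal extractions are needed.

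The obstacle you flag in your last paragraph---that $\pi_1 \circ F^{-m}(p_k)$ might drift to $0$, placing $p$ in $\mathcal{O}^{+}_{F}$---is real (the paper tacitly assumes the inverse branches of $p$ exist), but your fix via the derivative blow-up $|f'(z)| \ge C/|z|^{1-\rho}$ does not work as stated: the claim that ``all previous iterates are pinned against the critical point'' is not correct, since for $j < m$ the iterates $\pi_1(F^{-j}(p_k))$ converge to the well-defined non-zero values $\pi_1(F^{-j}(p))$, and the length condition $|f^m(J_k^m)| = 2/n$ alone produces no contradiction with $J_k^m$ being a tiny interval near $0$ (the derivative blow-up is precisely what allows such an interval to have image of length $2/n$). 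The correct, elementary argument is: if $\pi_1(F^{-m}(p_k)) \to 0$, then the endpoint $b_k$ of $J_k^m$ nearest $0$ is trapped between $\pi_1(F^{-m}(p_k))$ and $0$ (as $0 \notin J_k^m$), hence $b_k \to 0$; but $f^m(b_k)$ is an endpoint of $]x_k - 1/n, x_k + 1/n[$, so $f^m(b_k) \to x \pm 1/n$, while simultaneously $f^m(b_k) = f^{m-1}(f(b_k)) \to f^{m-1}(\pm 1) = x$ by continuity of $f^{m-1}$ along the post-critical orbit (which never meets $0$ since $0$ is not periodic). This contradiction rules out $p \in \mathcal{O}^{+}_{F}$.
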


\begin{proof}
It is sufficient to prove that $\Lambda_{n}$ is closed. 
We consider a sequence of points $M_{k}:=(\xi_{k},\xi'_{k})$ converging to $(x,y)$ and all in $\Lambda_{n}$. Fix $\eps$ very small. There exists $k_{\eps}$ such that for every $k\ge k_{\eps}$, $|x-\xi_{k}|<\eps$. Then, for $k\ge k_{\eps}$, $W^{u}_{loc}(\xi_{k},\xi'_{k})$ contains a graph over the interval $]x-\frac1n+\eps, x+\frac1n-\eps[\subset ]\xi_{k}-\frac1n,\xi_{k}+\frac1n[$. 

Fix some integer $j$ bigger than $k_{\eps}$. Then, consider $k$ sufficiently big such that the inverse branches of $F$ for $M$ and for $M_{k}$ coincide at least until $j$. The interval $]x-\frac1n+\eps,x+\frac1n-\eps[$ is never cut by $f^{-i}$, $i\le j$ (following the $F^{-i}(M)$'s) because it is contained in the interval $]\xi_{k}-\frac1n,\xi_{k}+\frac1n[$ which is never cut following the $F^{-i}(M_{k})$'s and these inverse branches coincide.

As $j$ can be chosen as big as wanted, Definition \ref{def-paststab2} shows that $M$ has the past stabilization property. 
Furthermore, the local unstable manifold $W^{u}_{loc}(M)$ contains a graph over the interval $]x-\frac1n+\eps,x+\frac1n-\eps[$. Letting $\eps\to 0$ this proves that $M$ belongs to $\Lambda_{n}$. 
\end{proof}

\begin{proposition}
\label{prop-cuplambdandense}
The set $\disp\cup\Lambda_{n}$ is dense in $\Lambda$.
\end{proposition}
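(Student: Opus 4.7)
The strategy is to approximate any $M\in\Lambda$ by a periodic point $P\in\Lambda$ of $F$ and to observe that every such $P$ already lies in $\cup\Lambda_{n}$.

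\textbf{Periodic points lie in $\cup\Lambda_{n}$.} Let $P\in\Lambda$ be periodic for $F$. Since $0$ is neither periodic nor pre-periodic for $f$ by the standing assumption, $P\notin\CO_{F}^{+}$, and the backward orbit of $P$ under $F$ coincides with the (finite) periodic orbit of $P$. Its first-coordinate values avoid $0$ and are therefore bounded below by some $\delta>0$. For $\epsilon<\delta$ the preimages of $]\pi_{1}(P)-\epsilon,\pi_{1}(P)+\epsilon[$ along this backward orbit contract by at least $(\sqrt{2})^{-n}$ at step $n$ and stay inside the $\delta$-gap around $0$. So $P$ has the past-stabilization property (Definition~\ref{def-paststab2}), and Theorem~\ref{theo-var-instable} places $P$ in some $\Lambda_{n}$.

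\textbf{Periodic points are dense in $\Lambda$.} Every $M\in\Lambda\setminus\CO_{F}^{+}$ carries a well-defined bi-infinite itinerary $\omega(M)=(s_{n}(M))_{n\in\ZZ}\in\{0,1\}^{\ZZ}$, where $s_{n}(M)$ records whether $\pi_{1}(F^{n}(M))$ is negative or positive; the forward part is the coding from the proof of Corollary~B, and the backward part is well-defined by uniqueness of inverse branches on $\Lambda\setminus\CO_{F}^{+}$. A cylinder estimate shows: if $\omega(M_{1})$ and $\omega(M_{2})$ agree on $[-N',N]$, then $|\pi_{1}(M_{1})-\pi_{1}(M_{2})|\leq 2(\sqrt{2})^{-N}$ by forward expansion of $f$, and $|\pi_{2}(M_{1})-\pi_{2}(M_{2})|\leq 2(1/2)^{N'}+C(\sqrt{2})^{-N}$ for some universal $C$ (obtained by writing $M_{i}=F^{N'}(F^{-N'}(M_{i}))$, noticing that $F^{-N'}(M_{1})$ and $F^{-N'}(M_{2})$ share a depth-$(N+N'+1)$ \emph{forward} cylinder, and iterating $|\Delta y_{k+1}|\leq\frac{1}{2}|\Delta y_{k}|+M|\Delta x_{k}|$ with $|\Delta x_{k}|\leq 2(\sqrt{2})^{-(N+N'+1-k)}$). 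Given $M$ and $\eta>0$, I choose $N,N'$ so that both bounds are $<\eta/2$, then extend $\omega(M)|_{[-N',N]}$ to a $(N+N'+1)$-periodic bi-infinite sequence (for $N,N'$ large enough the window contains both symbols, so the extension is admissible). Via Corollary~B's coding this periodic sequence corresponds to a periodic point $p$ of $f$, which lifts to a unique periodic $P=(p,y_{p})\in\Lambda$ by the contraction of $F^{T}$ on the vertical $\{p\}\times[-1,1]$, where $T$ denotes the period. Then $|P-M|<\eta$, and by the first step $P\in\cup\Lambda_{n}$.

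\textbf{Expected main difficulty.} The subtle point is the $\pi_{2}$-estimate: backward itineraries alone do not determine $\pi_{2}$, so one must control forward and backward depths simultaneously. Using the depth-$(N+N'+1)$ forward cylinder rather than the cruder depth-$(N'-k)$ one is what makes the error term $C(\sqrt{2})^{-N}$ tend to $0$ with $N$ instead of remaining a fixed constant.
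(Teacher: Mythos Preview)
Your first step is correct: any $F$-periodic point satisfies the past-stabilization property (its backward orbit is finite and avoids the critical line), hence lies in some $\Lambda_n$.

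The gap is in the second step. You claim that the periodic repetition of the block $\omega(M)|_{[-N',N]}$ is admissible, with the sole justification that ``the window contains both symbols.'' This is not enough: the Lorenz subshift $\K\subset\{0,1\}^{\N}$ is governed by the kneading sequences of the two critical values and is in general \emph{not} of finite type. A finite word appearing inside an admissible sequence need not remain admissible under periodic repetition; equivalently, if $I_w\subset[-1,1]$ is the cylinder of points with initial itinerary $w=s_{-N'}\ldots s_N$, the inclusion $f^{|w|}(I_w)\supset I_w$ (which is what would produce the desired periodic point) can fail. Concretely, the concatenation $s_N s_{-N'}$ at the seam may force some shift of $w^\infty$ to violate the kneading inequality even though every shift of the true bi-infinite itinerary of $M$ satisfies it. So ``Corollary~B's coding'' does not hand you a periodic point of $f$ without further work.

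A repair is possible within your framework: instead of periodising the block, use the density of $f$-periodic points in $[-1,1]$ (which the paper takes for granted in Section~\ref{sec-inducedmap}) to choose a periodic $q$ so close to $x_{-N'}$ that its forward $f$-itinerary agrees with that of $x_{-N'}$ for $N+N'+1$ steps; then $p:=f^{N'}(q)$ is periodic, its forward itinerary matches that of $x$ to depth $N$, and its backward cycle matches the backward itinerary of $M$ to depth $N'$, so the lift $P=(p,y_p)$ approximates $M$ via your cylinder estimates. The paper's own proof takes a quite different route: for $M\notin\cup\Lambda_n$ it exploits the failure of past-stabilization to find times $k_n$ with $f^{k_n}([x_{-k_n},0[)$ uncut and of diameter tending to $0$, places SRB-generic points $Q_n$ in the vertical over $[x_{-k_n},0[$, and shows that $F^{k_n}(Q_n)\in\cup\Lambda_n$ converges to $M$; no symbolic admissibility issue arises.
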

\begin{proof}
Let $M=(x,y)$ in $\disp\Lambda\setminus\cup\Lambda_{n}$. Theorem \ref{theo-var-instable} shows that $M$ cannot get the past-stabilization property. Thus, there exists an increasing sequence $(k_{n})$ such that for each $n$, the intervals $f^{j}([x_{-k_{n}},0[)$ are never cut (for $0\le j\le k_{n}$) and the sequences $\disp \diam(f^{k_{n}}([x_{-k_{n}},0[))$ decreases\footnote{Following the above proofs, for simplicity we consider the $x_{-k_{n}}$ are negative} to $0$. 

It is known (see \cite{viana}) that there exists a {\em SRB} measure for $F$. The projection on $[-1,1]$ of its support is the whole interval $[-1,1]$. Therefore, for every $n$, there exists some point $Q_{n}=(\xi_{n},\eta_{n})$, generic for this {\em SRB}-measure such that $x_{-k_{n}}<\xi_{n}<0$. 

The point $Q_{n}$ is ``generic'' for the measure, and we can assume it has a local unstable manifold $W^{u}_{loc}(Q_{n})$ (in the sense of Pesin theory). Moreover, for every $k$, $F^{k}(Q)$ belongs to some $\Lambda_{j}$. 
By definition of $k_{n}$, $F^{k_{n}}(Q_{n})$ belongs to the vertical band $\pi^{-1}(f^{k_{n}}([x_{-k_{n}},0[))$. It also belongs to the horizontal stripe $[-1,1]\times [y-2^{\frac{k_{n}}2},y+2^{\frac{k_{n}}2}]$. 

Therefore the sequence  of points $F^{k_{n}}(Q_{n})$  converges to $M$ as $n$ goes to $+\8$. 
\end{proof}

\begin{definition}
\label{def-wuLocgrand}
We call $u$-curve in $[-1,1]$ any integration curve for the vector field $E^{u}$: $\CF^{u}$ is an $u$-curve if and only if for every $M\in \CF^{u}$, $T_{M}\CF^{u}=E^{u}(M)$. 
\end{definition}

\begin{remark}
\label{rem-nobeakucurve}
Beaks do not belong to $u$-curves. 
$\blacksquare$\end{remark}

Existence of $u$-curves directly follows from Theorem \ref{theo-var-instable}.  Note that inclusion defines a relative order relation between $u$-curves. Then,  Zorn's lemma shows that  maximal element exist. For $M$ in $\cup\Lambda_{n}$ we write $W^{u}_{L}(M)$ a maximal (for the inclusion) $u$-curve which contains $M$. 

\begin{lemma}
\label{lem-Wumaxunic}
For every $M$ in $\cup\Lambda_{n}$ there exists a unique maximal $u$-curve $W^{u}_{L}(M)$. It is called the maximal local unstable manifold for $M$. Moreover, for every $j\ge 0$, $F^{-j}(W^{u}_{L}(M))$ is well defined, connected and does not intersect the critical set. 
\end{lemma}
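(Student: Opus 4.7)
The existence part is by Zorn's lemma as noted just above, so the substance is uniqueness of the maximal curve together with the control of its backward iterates. My plan is to prove \emph{local uniqueness} of integral curves of $E^u$ at every point of $\cup\Lambda_n$, and then derive all three conclusions by connectedness and maximality. For local uniqueness at $N=(x_N,y_N)\in\cup\Lambda_n$, let $\CF$ be any $u$-curve through $N$ and let $W^u_{loc}(N)$ be the local unstable manifold from Theorem \ref{theo-var-instable}. By Remark \ref{rem-conemajor}, $E^u(\cdot)\subset\wh\CC^{u}_\kappa$, so both curves are $\kappa$-Lipschitz graphs $y=\psi(x)$ and $y=\varphi_u(x)$ over a common horizontal interval $I\ni x_N$, with $\psi(x_N)=\varphi_u(x_N)=y_N$. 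Since $N\notin\CO^{+}_{F}$, its first-coordinate backward orbit $(x_{-j})$ is well defined, and for $x\in I$ close enough to $x_N$ continuity of both graphs forces the $f$-preimage branches of the first coordinates of $F^{-j}(x,\psi(x))$ and $F^{-j}(x,\varphi_u(x))$ to coincide with those of $x_{-j}$ for every finite $j$. The two points then share the same first coordinate at every backward level, and the fiber contraction $|\partial g/\partial y|\le 1/2$ gives by induction
\[
|\psi(x)-\varphi_u(x)|\le (1/2)^j\,|\psi_{-j}(x)-\varphi_{u,-j}(x)|\le 2(1/2)^j.
\]
Letting $j\to\infty$ yields $\psi\equiv\varphi_u$ on $I$, so $\CF$ agrees with $W^u_{loc}(N)$ near $N$.

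Given two maximal $u$-curves $\CF_1,\CF_2$ through $M$, local uniqueness at every common point shows that $\CF_1\cap\CF_2$ is open in each $\CF_i$; it is closed by continuity of the tangent direction and contains $M$; by connectedness of each $\CF_i$ (a maximal integral curve of a line field is a connected arc) this intersection equals both curves, and maximality forces $\CF_1=\CF_2$, proving uniqueness of $W^u_L(M)$. The same local uniqueness, combined with maximality, forces $W^u_{loc}(N)\subset W^u_L(M)$ for every $N\in W^u_L(M)$, so
\[
W^u_L(M)\;=\;\bigcup_{N\in W^u_L(M)} W^u_{loc}(N)\;\subset\;\Lambda\setminus\CO^{+}_{F}
\]
by Remark \ref{rem-lambdaWu}. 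Consequently $F^{-j}$ is well defined on $W^u_L(M)$ via the continuous inverse branch based at $F^{-j}(M)$, its image is connected as the continuous image of a connected arc, and since each $F^{-j}(W^u_{loc}(N))$ avoids the critical set $\{0\}\times[-1,1]$ by item (3) of Theorem \ref{theo-var-instable}, so does the union $F^{-j}(W^u_L(M))$.

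The main technical delicacy is the branch-identification step in the local uniqueness argument: for the telescoping estimate above to be meaningful I need, for $x$ close to $x_N$ and every finite $j$, the $f$-preimage branch of $\pi_1(F^{-j}(x,\psi(x)))$ to coincide with that of $x_{-j}$ (and analogously for $\varphi_u$). This is where the hypothesis $N\notin\CO^{+}_{F}$, the continuity of both graphs, and the fact that $F$ is a homeomorphism off the critical set combine: as $x\to x_N$ along $I$ the $j$-th backward itinerary is locally constant, so for $x$ sufficiently close to $x_N$ (depending on $j$) the branches must agree. Once this is in place the rest of the argument is routine.
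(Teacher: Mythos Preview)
Your approach via local uniqueness of integral curves of $E^u$ is different from the paper's and, once completed, is a clean alternative. The paper argues directly with two maximal curves $W^u_1,W^u_2$: it considers the endpoints $M_l,M_r$ of their maximal interval of coincidence and rules out, by a short case analysis, the possibility that both curves continue past such a point and split. The splitting case is excluded by observing that two points with the same first coordinate but different second coordinates are driven apart by the vertical expansion of $F^{-j}$ and eventually land in different connected components of $F([-1,1]^2)$, contradicting that $F^{-j}(W^u_1)$ and $F^{-j}(W^u_2)$ are connected and share the backward image of the common piece. Your route replaces this case analysis by a fibre-contraction estimate followed by an open--closed argument; this is the standard invariant-manifold uniqueness template and is arguably more transparent.

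There is, however, a real gap precisely at the step you flag as the ``main technical delicacy.'' You establish that for $x$ close to $x_N$ \emph{depending on $j$} the backward branches agree up to level $j$, and then write ``letting $j\to\infty$ yields $\psi\equiv\varphi_u$ on $I$.'' If the admissible neighbourhood shrinks with $j$, that limit only recovers $\psi(x_N)=\varphi_u(x_N)$, which you already have. The repair removes the $j$-dependence altogether: a $u$-curve $\CF$ lies by definition in $\Lambda\setminus\CO_{F}^{+}$, so each $F^{-j}(\CF)$ is well defined, connected, and disjoint from the critical line; hence $\pi_1(F^{-j}(\CF))$ stays on one side of $0$ and the backward itinerary is \emph{globally} constant on $\CF$. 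The same holds for $W^{u}_{loc}(N)$ by item~(3) of Theorem~\ref{theo-var-instable}. Since both itineraries agree at $N$, the first coordinates of $F^{-j}(x,\psi(x))$ and $F^{-j}(x,\varphi_u(x))$ coincide for every $x\in I$ and every $j$, and your contraction bound $|\psi(x)-\varphi_u(x)|\le 2\cdot 2^{-j}$ then holds on all of $I$, giving $\psi\equiv\varphi_u$. (The same observation also shows that every point on a $u$-curve has the past-stabilisation property and hence lies in $\cup\Lambda_n$; you need this implicitly when you invoke $W^{u}_{loc}(N)$ at an arbitrary common point $N\in\CF_1\cap\CF_2$ in the open--closed step.)
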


\begin{proof}
Let $M=(x,y)$ in $\cup\Lambda_{n}$. Let $W^{u}_{L}(M)$  be any maximal $u$-curve containing $M$. By definition any point in $W^{u}_{L}(M)$ belongs to $\Lambda\setminus\CO^{+}_{F}$. Then, $F^{-j}(W^{u}_{L}(M))$ is well-defined and does not intersect the critical set. It is connected because otherwise $W^{u}_{L}(M)$ would contain a beak, hence would intersect $\CO^{+}_{F}$.

Assume that there exists two maximal $u$-curves, say $W^{u}_{1}$ and $W^{u}_{2}$ containing $M=(x,y)$. 
Both $W^{u}_{1}$ and $W^{u}_{2}$ are graphs over two intervals $I_{1}$ and $I_{2}$ containing $x$ in their interior.  We set $W_{i}:=graph(g_{i})$. 
Both graphs coincide for some interval ( {\it a priori} possibly containing only $M$) say $I$. The left extremal point of $I$ is denoted by $x_{l}$ and the right extremal point is denoted by $x_{r}$. By continuity we can define $g_{i}(x_{j})$ with $i=1,2$ and $j=l,r$. Necessarily $g_{1}(x_{l})=g_{2}(x_{l})$ and $g_{1}(x_{r})=g_{2}(x_{r})$. 

Assume $M_{l}:=(x_{l},g_{1}(x_{l}))$ is in $\bigcup\Lambda_{n}$. There are only three possibilities:
\begin{itemize}
\item $M_{l}$ is a beak. 
\item $M_{l}$ is a limit point just for one $W^{u}_{i}(M)$.
\item $M_{l}$ is not a limit point and $W^{u}_{1}$ and $W^{u}_{2}$ split at that moment and go further.
\end{itemize}

We claim that only the first alternative is possible. 
Indeed, the second would contradict maximality for the shortest $W^{u}_{i}$ (the one which stopped at $M_{l}$), because it could be continued with a piece of the other one. 
The third alternative is also impossible for the following reason. Both $W^{u}_{i}(M)$ split at $M_{l}$ and continue a little bit further on the left. As they have bounded slope, they necessarily exits $x'<x_{l}$ such that $g_{1}(x')$ and $g_{2}(x')$ exist and are different. 
Now, $F^{-j}(W^{u}_{1})(M)$ and $F^{-j}(W^{u}_{2})(M)$ are connected; they must  be in the same connected component of $F([-1,1]^{2})$ because they have an non-empty intersection. 
On the other hand,  $F^{-j}$ expands in the vertical direction. Thus, there exists some $j$ such that $F^{-j}((x',g_{1}(x')))$ and $F^{-j}((x',g_{2}(x')))$ are in two different connected component of $F([-1,1]^{2})$. This is a contradiction with connectivity.

The same reasoning shows that the other extremal point $M_{r}:=(x_{r},g_{1}(x_{r}))$ is also a beak. 
Consequently $W^{u}_{1}=W^{u}_{2}$ because they coincide into the ``interval'' $]M_{l},M_{r}[$ and these two extremal points are not in $\bigcup\Lambda_{n}$. 
\end{proof}

\begin{remark}
\label{rem-Wuopen}
Actually, adapting the proof of Lemma \ref{lem-Wumaxunic} we get that $W^{u}_{L}(M)$ is a graph over an interval. The two extreme points of $W^{u}_{L}(M)$ (meaning with extreme $x$-coordinates) are beaks and $W^{u}_{L}(M)$ is an open embedded submanifold. 
$\blacksquare$\end{remark}

We define the relation $M\sim_{u}M'\iff W^{u}_{L}(M)=W^{u}_{L}(M')$.

\begin{proposition}
\label{prop-classe-WuL}
The equivalent classes for $\sim_{u}$ are the connected components of $\cup\Lambda_{n}$.
\end{proposition}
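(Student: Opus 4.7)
My plan is to prove the two inclusions separately, via a clopen argument. Fix $M \in \cup\Lambda_n$, let $[M] = W^u_L(M)$ denote its $\sim_u$-class, and let $C_M$ denote the connected component of $M$ in $\cup\Lambda_n$. I will show $W^u_L(M) \subset C_M$ (easy direction) and then $W^u_L(M) = C_M$ by showing $W^u_L(M)$ is clopen in $\cup\Lambda_n$.

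For the easy direction, every $M' = (x', y') \in W^u_L(M)$ satisfies $W^u_L(M') = W^u_L(M)$ by the uniqueness part of Lemma \ref{lem-Wumaxunic}; by Remark \ref{rem-Wuopen} this is a graph over an open interval containing $x'$ in its interior, so some $(x'-1/n, x'+1/n)$ fits inside that interval and hence $M' \in \Lambda_n$. Thus $W^u_L(M) \subset \cup\Lambda_n$, and because this set is a graph over an open interval it is path-connected, so it lies in a single component $C_M$.

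For closedness of $W^u_L(M)$ in $\cup\Lambda_n$, I use Remark \ref{rem-Wuopen}: the only limit points of $W^u_L(M)$ in $\Lambda$ outside of $W^u_L(M)$ itself are its two beak endpoints $M_l$ and $M_r$. At a beak, by construction, the first coordinate is a forward iterate of $0$ under $f$, so some inverse iterate of a neighborhood of it is cut exactly by the critical point, with the beak landing on the boundary of the cut interval. This rules out the past-stabilization property of Definition \ref{def-paststab2}, and by Theorem \ref{theo-var-instable} it rules out membership in any $\Lambda_n$. Consequently $\overline{W^u_L(M)} \cap \cup\Lambda_n = W^u_L(M)$, i.e., $W^u_L(M)$ is closed in the subspace $\cup\Lambda_n$.

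The main obstacle, and the heart of the proof, is openness of $W^u_L(M)$ in $\cup\Lambda_n$. Suppose for contradiction that some sequence $M''_k \to M' \in W^u_L(M)$ with $M''_k \in \cup\Lambda_n \setminus W^u_L(M)$. By Lemma \ref{lem-Wumaxunic}, each $W^u_L(M''_k)$ is disjoint from $W^u_L(M)$. I plan to derive a contradiction from the continuity of $E^u$ on $\cup\Lambda_n$ together with the uniform Lipschitz / graph-transform bounds of Theorem \ref{theo-var-instable}: if $M''_k \in \Lambda_n$ for a fixed $n$, each $W^u_L(M''_k)$ contains a graph of length at least $2/n$ with slope bounded by $1/\alpha$ and tangent direction $E^u(M''_k) \to E^u(M')$. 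Track these graphs back under $F^{-j}$, $j \leq j_0$ large, where the expansion of $F^{-1}$ in the vertical direction (Lemma \ref{lem-expansion-cone}) separates any two distinct graphs of uniform size through nearby points into different connected components of $F([-1,1]^2)$, contradicting the fact that the backward orbits of $M''_k$ and $M'$ must lie in the same component once $k$ is large (they are close for each fixed $j$). This forces $W^u_L(M''_k) = W^u_L(M')$ for large $k$, contradicting $M''_k \notin W^u_L(M)$. Combining closedness and openness with connectedness of $C_M$ yields $C_M = W^u_L(M)$, finishing the proof.
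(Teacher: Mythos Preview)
Your clopen strategy breaks down at the openness step, and the failure is structural rather than a matter of filling in details. In the Lorenz attractor the unstable leaves accumulate on one another: for any $M'\in W^{u}_{L}(M)$ there are points of $\cup\Lambda_{n}$ lying on \emph{other} maximal $u$-curves arbitrarily close to $M'$ (think of a lamination with a Cantor transversal, as in a solenoid). Hence $W^{u}_{L}(M)$ is simply not open in $\cup\Lambda_{n}$, and no argument can establish that it is. Concretely, your contradiction does not close: for a sequence $M''_{k}\to M'$ with $M''_{k}$ on other leaves, the backward images $F^{-j}(M''_{k})$ and $F^{-j}(M')$ do stay close for each fixed $j$ and large $k$, and the graphs $F^{-j}(W^{u}_{L}(M''_{k}))$ and $F^{-j}(W^{u}_{L}(M'))$ are distinct but also close---there is nothing contradictory about this. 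The assumption that all $M''_{k}$ lie in a fixed $\Lambda_{n}$ is also unjustified, but even granting it the argument does not yield a contradiction. (Your easy direction and your closedness argument are fine; closedness alone, however, only says each leaf is contained in a component, which you already had.)

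The paper avoids this trap by never trying to prove openness. Instead it takes the connected component $W\supset W^{u}_{L}(M)$ and exploits the fact that $W\subset\cup\Lambda_{n}$ contains no beaks, so $F^{-k}|_{W}$ is continuous and every $F^{-k}(W)$ is connected. If $W$ contained a second point $M'$ with the same first coordinate as some $M''\in W^{u}_{L}(M)$, vertical expansion under $F^{-k}$ would eventually put $F^{-k}(M')$ and $F^{-k}(M'')$ in different components of $F([-1,1]^{2})$, contradicting connectedness of $F^{-k}(W)$. Thus distinct maximal $u$-curves in $W$ cannot overlap in the $x$-direction; but $\pi_{1}(W)$ is an interval, and an open interval cannot be a disjoint union of more than one open interval, so $W$ contains a single leaf. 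The essential input you are missing is that connectedness of $W$ propagates to all backward images---this is what replaces (and is strictly weaker than) openness.
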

\begin{proof}
An equivalence class is a maximal $u$-curve. It is thus connected. Moreover $\disp\cup\Lambda_{n}$ is included into $\disp \bigcup_{M\in \cup\Lambda_{n}}W^{u}_{L}(M)$. 

It remains to prove that every $W^{u}_{L}(M)$ is included in $\cup\Lambda_{n}$ and that they indeed are the connected components of $\cup\Lambda_{n}$. 

\medskip
Let $M'$ be in some $W^{u}_{L}(M)$. We will show that $M'$ has the past-stabilization property. The set $W^{u}_{L}(M)$ is connected  does not contain points of $\CO^{+}_{F}$. Then for every integer $k$, $F^{-k}(W^{u}_{L}(M))$ is a connected curve. It contains $F^{-k}(M')$. Consequently, $\pi_{1}(F^{-k}(W^{u}_{L}(M)))$ is an interval in $[-1,1]$ which is never cut by $f^{j}$, $0\le j\le k$ which contains $x'_{k}:=\pi_{1}(F^{-k}(M'))$. This shows that $M'$ has the past stabilization property.

Therefore we get $\disp \cup\Lambda_{n}=\cup_{M\in\cup\Lambda_{n}} W^{u}_{L}(M)$.

\medskip
Consider $M \in \cup\Lambda_{n}$ and $W$ the connected component of $\cup\Lambda_{n}$ which contains $M$. 
As $W^{u}_{L}(M)$ is connected, $W^{u}_{L}(M)\subset W$. Now assume that there exists $M'\in W\setminus W^{u}_{L}(M)$ and $M''\in W^{u}_{L}(M)$ such that $\pi_{1}(M')=\pi_{1}(M'')$. Again, for every $k$, $F^{-k}(W)$ is a connected set (otherwise there would be a beak in $W$). But $F^{-1}$ expands in the vertical direction and $F^{-k}(M')$ and $F^{-k}(M'')$ cannot always be in the same connected component of $F([-1,1]^{2})$. 

This shows that if $W$ is (or contains) an union of different maximal unstable local manifolds, these manifolds cannot overlaps in the vertical direction. 
On the other hand, $\pi_{1}$ is a continuous function and $\pi_{1}(W)$ has to be a connected set (thus an interval) in $[-1,1]$. Nevertheless if $W$ contains several maximal local unstable manifolds, their projections on $[-1,1]$ are disjoint intervals because the manifolds cannot overlaps (in $x$-direction). Over a junction point, there should be another maximal local unstable manifold and there should be overlapping. On the other hand, connectedness shows that an open interval cannot be written as the union of disjoints intervals.  This proves that $W$ cannot contain more than one maximal local unstable manifold, hence $W=W^{u}_{L}(M)$. 
\end{proof}

\begin{proposition}
\label{prop-cuplambdanergo}
Let $\mu$ be a $F$-invariant ergodic measure. Assume $\mu(\Lambda_{j})>0$ for some integer $j$. Then $\mu(\cup\Lambda_{n})=1$. 
\end{proposition}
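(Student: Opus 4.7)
The strategy is to show that the set $U:=\bigcup_{n}\Lambda_{n}$ is $F$-invariant modulo $\mu$ and then to conclude by ergodicity. Since each $\Lambda_{n}$ is compact (Lemma \ref{lem-lambdancompact}), $U$ is a Borel subset of $\Lambda$, and the hypothesis $\mu(\Lambda_{j})>0$ gives $\mu(U)>0$. By Theorem \ref{theo-var-instable} together with the characterization of $\bigcup\Lambda_{n}$ stated in Theorem C, $U$ coincides with the set of points of $\Lambda\setminus\CO_{F}^{+}$ that enjoy the past-stabilization property of Definition \ref{def-paststab} (equivalently, Definition \ref{def-paststab2}).

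The heart of the argument is the inclusion $F^{-1}(U)\subset U$. Suppose $F(M)\in U$; then $F(M)$ satisfies the past-stabilization property, so by Remark \ref{rem-paststab} every backward iterate of $F(M)$ inherits the same property. In particular each preimage of $F(M)$ under $F$ lies in $U$, and hence $M\in U$. This inclusion is unconditional, with no exceptional $\mu$-null set to remove.

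Since $\mu$ is $F$-invariant, $\mu(F^{-1}U)=\mu(U)$, so $\mu(U\setminus F^{-1}U)=0$; combined with the inclusion above, this gives $U=F^{-1}(U)$ modulo $\mu$. Ergodicity of $\mu$ then forces $\mu(U)\in\{0,1\}$, and the hypothesis $\mu(\Lambda_{j})>0$ rules out $\mu(U)=0$, yielding $\mu(U)=1$. There is no significant obstacle here: the proof rests only on the clean identification of $U$ with the set of past-stabilizing points and on the backward invariance encoded in Remark \ref{rem-paststab}; the more delicate ingredients (construction of local unstable manifolds, compactness of the $\Lambda_{n}$'s) have already been taken care of.
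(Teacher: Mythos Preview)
Your proof is correct. Both your argument and the paper's reduce to showing that $U=\bigcup_n\Lambda_n$ is $F$-invariant modulo $\mu$ and then invoke ergodicity, but you run the invariance in the opposite direction. The paper checks \emph{forward} invariance: if $M\in U$ and $\pi_1(F(M))\neq 0$ then $F(M)$ again has the past-stabilization property, so $F(U)\subset U$ outside the critical fibre; it then appeals to Lemma~\ref{lem-0-0mesure} to see that $\pi_1^{-1}(0)$ is $\mu$-null. You instead check \emph{backward} invariance $F^{-1}(U)\subset U$ via Remark~\ref{rem-paststab}, which yields an unconditional set-theoretic inclusion and makes Lemma~\ref{lem-0-0mesure} unnecessary for this step. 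Your route is thus marginally more economical; the paper's route has the minor advantage of making explicit why the critical line is the only obstruction to forward invariance.
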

\begin{proof}
We claim that for $\mu$ a.e. $M$, $F(M)$ belongs to some $\Lambda_{j}$. Actually, if $M$ belongs to $\Lambda_{n}$ and $\pi_{1}(F(M))\neq 0$, then $F(M)$ again belongs to some $\Lambda_{j}$ because it has the past-stabilization property. 
Conversely, if $\pi_{1}(F(M))=0$, $F(M)$ clearly does not belong to any $\Lambda_{j}$.

Now Lemma \ref{lem-0-0mesure} shows that $\mu(\pi_{1}^{-1}(0))=0$ and then $\mu(F^{-1}(\pi_{1}^{-1}(0)))=0$ (note that $F^{-1}(\pi_{1}^{-1}(0))$ is well defined). 

This shows that the following inclusion holds $\mu$-a.e. :
$$F(\cup\Lambda_{n})\subset \cup\Lambda_{n}.$$
As the measure $\mu$ is ergodic and $\mu(\Lambda_{j})>0$, then $\mu(\cup\Lambda_{n})=1$. 
\end{proof}


\section{Rectangle and induced sub-system}\label{sec-inducedmap}
\subsection{mille-feuilles}

The periodic orbits for the one-dimensional system $([-1,1],f)$ are dense. We pick some positive $\wh\delta$ very small such that the set of points $(x,y)$ satisfying 
\begin{equation}\label{def-delta-band}
\eta(x,y)>2\wh\delta
\end{equation}

is non-empty\footnote{Remind $\eta(x,y)$ was defined in Theorem \ref{theo-var-instable}.} . We also assume  
\begin{equation}
\label{equ2-condi-deltabanda}
f(\wh\delta)<-\wh\delta \text{ and }f(-\wh\delta)>\wh\delta.
\end{equation}
Then, we pick a $\wh\delta$-dense periodic orbit for $([-1,1],f)$, and consider two consecutive points of the orbit (for the relation of order $\le$) in $[-1,0[$ or $]0,1]$. We call them $P_{l}<P_{r}$. Remember that $0$ is neither periodic (for $f$) nor pre-periodic. Therefore, the periodic orbit $P_{l}, f(P_{l}), \ldots, P_{r},f(P_{r}),\ldots$ does not belong to the forward orbit of $0$. 

We consider the vertical  band $\CB$ in the two dimension system 
$$\CB=\left\{(x,y),\ P_{l}\le x\le P_{r},\ -1\le y\le 1\right\}.$$
The \emph{interior of the band} denotes points with in $\CB$ with first coordinate in $]P_{l},P_{r}[$
Then, we consider all the pieces of local unstable manifolds going as a graph over $[P_{l},P_{r}]$ from the left border of $\CB$ to the right border of $\CB$. We only consider pieces of manifolds without ``beak'' over  the whole interval $[P_{l},P_{r}]$. We get a set called mille-feuilles  and denoted $\CM_{0}$. By construction, the set is laminated. Each element of the lamination is called an \emph{ (unstable) leaf} of the mille-feuilles. 

\begin{lemma}
\label{lem-millefeuille-ferme}
The mille-feuilles $\CM_{0}$ is closed. 
\end{lemma}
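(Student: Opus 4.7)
Given a convergent sequence $(M_k)_{k\geq 0}\subset\CM_0$ with $M_k\to M_\infty$, each $M_k$ lies on a leaf $W_k=\mathrm{graph}(\varphi_k)$ for some $\frac{1}{\alpha}$-Lipschitz $\varphi_k:[P_l,P_r]\to[-1,1]$, by Theorem~\ref{theo-var-instable}. The first step is to apply Arzel\`a--Ascoli to this equi-Lipschitz family and extract a subsequence along which $\varphi_k$ converges uniformly to a $\frac{1}{\alpha}$-Lipschitz function $\varphi_\infty$. Set $W_\infty:=\mathrm{graph}(\varphi_\infty)$: then $M_\infty\in W_\infty$, and $W_\infty\subset\Lambda$ by closedness of $\Lambda$. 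The rest of the plan shows $W_\infty$ is itself a leaf of $\CM_0$.

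For each interior $x\in(P_l,P_r)$ I claim $(x,\varphi_\infty(x))\in\bigcup_n\Lambda_n$. Since $W_k$ is a graph over the closed interval $[P_l,P_r]$ without beak, the maximal unstable manifold $W^u_L(x,\varphi_k(x))$---which by Remark~\ref{rem-Wuopen} is an open embedded submanifold---is a graph over some open interval $(a_k,b_k)$ with $a_k<P_l$ and $b_k>P_r$. The symmetric half-width at $x$ therefore satisfies $\eta(x,\varphi_k(x))/2=\min(x-a_k,b_k-x)>\min(x-P_l,P_r-x)=:\rho(x)>0$, uniformly in $k$. Fixing $n(x)$ with $1/n(x)<\rho(x)$, I get $(x,\varphi_k(x))\in\Lambda_{n(x)}$ for every $k$, and compactness of $\Lambda_{n(x)}$ (Lemma~\ref{lem-lambdancompact}) yields $(x,\varphi_\infty(x))\in\Lambda_{n(x)}$. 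The piece $W_\infty\cap\{P_l<\pi_1<P_r\}$ is a connected subset of $\bigcup_n\Lambda_n$; Proposition~\ref{prop-classe-WuL} then places it inside a single maximal unstable manifold $W^u_L$, which by Remark~\ref{rem-Wuopen} is an open graph over some open interval $(a,b)\supseteq(P_l,P_r)$.

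To conclude, I upgrade this containment to $[P_l,P_r]\subsetneq(a,b)$. Every beak has first coordinate in the postcritical orbit $\CO_f^+(0)$, whereas $P_l,P_r$ lie in a periodic orbit of $f$ disjoint from $\CO_f^+(0)$ (since $0$ is neither periodic nor pre-periodic). The beak coordinates $a,b$ of $W^u_L$ therefore satisfy $a\neq P_l$ and $b\neq P_r$; combined with $a\leq P_l$ and $b\geq P_r$, this forces $a<P_l$ and $b>P_r$ strictly. Consequently $W_\infty$ coincides with $W^u_L|_{[P_l,P_r]}$ by Lipschitz continuity, and the latter is a graph over $[P_l,P_r]$ with no beak in that interval---a genuine leaf of $\CM_0$---so $M_\infty\in W_\infty\subset\CM_0$. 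The main obstacle is precisely this endpoint step: Lipschitz compactness alone leaves open the possibility $a=P_l$ or $b=P_r$, which would produce a spurious ``beak'' at the boundary of $[P_l,P_r]$ and push the limit leaf out of $\CM_0$; the symbolic observation that beak coordinates live in the countable set $\CO_f^+(0)$---which $P_l,P_r$ were deliberately chosen to avoid---is what rules this out.
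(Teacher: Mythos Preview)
Your proof is correct and rests on the same decisive observation as the paper's: the endpoints $P_l,P_r$ lie on a periodic $f$-orbit, hence are not in the post-critical orbit $\CO_f^+(0)$, so no beak of the limiting unstable manifold can sit at $P_l$ or $P_r$. Where you diverge from the paper is in the route to that endpoint. The paper exploits the lamination structure directly: it passes to a \emph{monotone} subsequence of leaves, takes the Lipschitz limit, and then argues that any accumulating sequence of beaks on one side of $\CB$ cannot limit onto the vertical borders (by the periodic-orbit observation) and otherwise stays a positive distance away, so the limit graph is admissible. You instead pull in the Section~\ref{sec-thC} machinery: Arzel\`a--Ascoli for the limit graph, compactness of $\Lambda_{n}$ (Lemma~\ref{lem-lambdancompact}) to certify that each interior limit point carries a local unstable manifold of controlled size, and the connected-component characterization (Proposition~\ref{prop-classe-WuL}, Remark~\ref{rem-Wuopen}) to glue these into a single maximal $W^u_L$ over an open interval $(a,b)\supseteq(P_l,P_r)$, before invoking the beak/periodic-orbit dichotomy to force $a<P_l<P_r<b$ strictly. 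Your route is longer but more explicit about \emph{why} the limit graph is genuinely an unstable leaf rather than just a Lipschitz curve; the paper's version is terser and leans on the reader to fill that in.
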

 \begin{proof}
We consider a family of manifolds converging. All these manifolds are $\frac1\alpha$-Lipschitz continuous graphs over $[P_{l},P_{r}]$. As they form a lamination, up to a subsequence, we can always assume that the sequence is monotone. It converges to a $\frac1\alpha$- Lipschitz graph over $[P_{l},P_{g}]$. 

It could happen that every (or only  infinitely many) graph in the monotone considered sequence has a beak on one side of the band $\CB$, and that this sequence of beaks converges to a point. The limit point cannot  be a beak and belong to the boundaries of the band, \ie to the vertical over $P_{l}$ or $P_{r}$ because the periodic orbit $P_{l}, f(P_{l}),\ldots$ does not belong to the forward orbit of 0. 

The unique other possibility  is that no beaks converge to the borders, thus they stay at a positive distance to the borders.

Therefore, the limit graph is admissible and $\CM_{0}$ is closed. 
\end{proof}

\begin{definition}
\label{def-vertiband}
Let $(x,y)$ be in the interior of the band $\CB$ and in $\CM_{0}$. Assume that $F^{n}(x,y)$ also belongs to the interior of $\CB$ and to $\CM_{0}$ ($n>1$). Then the connected component of $F^{-n}(\CB)\cap \CB$ which contains $(x,y)$ is called the vertical band associated to $(x,y)$ and time $n$
\end{definition}

We left it to the reader to check that contraction in the vertical direction shows that the connected component of $F^{-n}(\CB)\cap \CB$ which contains $(x,y)$  is indeed a vertical band of the form $[a,b]\times [-1,1]$.

 \begin{lemma}
\label{lem-nobeaks}
Let $(x,y)$ be in the interior of the band $\CB$ and in $\CM_{0}$. Let $\CB':=\disp [a,b]\times [-1,1]$ be the vertical band associated to $(x,y)$ and time $n$. Then for every $(\xi,\zeta)$ in $\CB'\cap \CM_{0}$, $F^{n}(W^{u}_{loc}(\xi,\zeta)\cap  \CB')$ is a complete unstable leaf of the mille-feuilles $\CM_{0}$
\end{lemma}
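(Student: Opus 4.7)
The plan is to establish that $F^n(W^u_{loc}(\xi,\zeta)\cap\CB')$ is a Lipschitz graph over the whole interval $[P_l,P_r]$ that avoids $\CO_F^+$, and is therefore a complete leaf of $\CM_0$. Since $(\xi,\zeta)\in\CM_0$, the set $W^u_{loc}(\xi,\zeta)\cap\CB'$ is a $\frac1\alpha$-Lipschitz graph of a function $\varphi:[a,b]\to[-1,1]$. Because $F$ is fibered and $F^n$ is continuous on $\CB'$ by construction, and because the unstable cone $\CC^u$ is invariant (Theorem~A), the image is automatically a $\frac1\alpha$-Lipschitz graph over $f^n([a,b])$. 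Two claims remain: (i) $f^n([a,b])=[P_l,P_r]$; and (ii) the image graph does not meet $\CO_F^+$ over $[P_l,P_r]$.

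For (i) I argue by contradiction. On $[a,b]$ the map $f^n$ is continuous and strictly monotone (say increasing) with $f^n([a,b])\subset[P_l,P_r]$. Suppose $f^n(a^+)>P_l$. Then $a$ cannot be the left boundary of $[P_l,P_r]$, so $a$ is a discontinuity of $f^n$, i.e.\ there exists $0\le j<n$ with $f^j(a)=0$. The standing assumption that the periodic orbit of $P_l$ is disjoint from $\CO_f^+(0)$ yields $f^n(a^+)=f^{n-j-1}(\pm 1)\in\CO_f^+(0)\cap(P_l,P_r)$. Now invoke the hypothesis on $(x,y)$: let $\psi:[P_l,P_r]\to[-1,1]$ be the graph function of the leaf of $\CM_0$ through $(x,y)$; because $g(0^-,\cdot)\equiv y_+$ and $g(0^+,\cdot)\equiv y_-$, the vertical fibre over $0$ collapses to a point under $F$, so $F^{j+1}(a,\psi(a))=(\pm 1,y_\pm)\in\CO_F^+$ and hence $F^n(a,\psi(a))\in\CO_F^+$. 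Let $L$ be the leaf of $\CM_0$ through $F^n(x,y)$. Since $L$ and $F^n(W^u_{loc}(x,y)\cap\CB')$ are two integral curves of the unstable line field $E^u$ through $F^n(x,y)$, by uniqueness they agree on the common $x$-interval $[f^n(a^+),f^n(b^-)]$. Taking limits as $x\searrow f^n(a^+)$ one gets $L(f^n(a^+))=F^n(a,\psi(a))\in\CO_F^+$, so $L$ has a beak in the interior of $[P_l,P_r]$, contradicting $L\in\CM_0$. Hence $f^n(a^+)=P_l$, and symmetrically $f^n(b^-)=P_r$.

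For (ii) I rule out beaks on the image graph over $[P_l,P_r]$. The endpoints $P_l,P_r$ lie on the periodic orbit, disjoint from $\CO_f^+(0)$, so no beak can sit there. At an interior point $x^*\in(P_l,P_r)$, writing $u^*=(f^n|_{[a,b]})^{-1}(x^*)\in(a,b)$, a beak on the image would require $F^j(u^*,\varphi(u^*))\in\{0\}\times[-1,1]$ for some $0\le j<n$, hence $f^j(u^*)=0$; but the interior $(a,b)$ of the connected component $[a,b]$ of $F^{-n}(\CB)\cap\CB$ contains no preimage of $0$ under any $f^j$ with $j<n$, a contradiction.

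The main obstacle is step (i). The crucial ingredient is the collapsing of the critical fibre $\{0\}\times[-1,1]$ under $F$ to the two points $(\pm 1,y_\pm)$; without this degeneracy, the image $F^n(a,\psi(a))$ would only be guaranteed to have $x$-coordinate in $\CO_f^+(0)$ but its $y$-coordinate could avoid the post-critical orbit, and the contradiction with $L$ being beak-free would fail.
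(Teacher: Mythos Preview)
Your approach is the paper's: both argue that a preimage of $0$ inside $[a,b]$ forces the leaf of $\CM_0$ through $F^n(x,y)$ to carry a point of $\CO_F^+$, contradicting $F^n(x,y)\in\CM_0$. The paper runs this contradiction for an \emph{interior} preimage (its ``no beaks'' paragraph) and simply asserts that $F^n(W)$ crosses $\CB$; you instead run it for a \emph{boundary} preimage (your step~(i), to get surjectivity) and then assert the interior claim in step~(ii). Your closing remark about the collapse $g(0^\pm,\cdot)\equiv y_\pm$ is exactly what the paper uses implicitly when it says the beak ``is created in the whole vertical''.

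There is, however, a real gap in your step~(ii). You write that ``the interior $(a,b)$ \ldots\ contains no preimage of $0$ under any $f^j$ with $j<n$'', treating this as a consequence of the definition of $\CB'$. It is not: if $c\in(P_l,P_r)$ satisfies $f^j(c)=0$ and both one-sided values $f^{n-j-1}(1),\,f^{n-j-1}(-1)$ happen to lie in $(P_l,P_r)$, then points on either side of $c$ belong to $f^{-n}([P_l,P_r])\cap[P_l,P_r]$, so $c$ sits in the interior of that topological connected component. Ruling out such $c$ is precisely the content of the paper's argument, and it is precisely your own step~(i) reasoning transplanted to an interior point. So the fix is to repeat your collapsing-fibre argument there rather than assert the conclusion.

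A minor point in step~(i): ``then $a$ cannot be the left boundary of $[P_l,P_r]$'' needs one more line. If $a=P_l$ then $f^n(a)$ lies on the periodic orbit of $P_l$; since $P_l,P_r$ are \emph{consecutive} points of that orbit in $[P_l,P_r]$, the only possible values are $P_l,P_r$, and monotonicity of $f^n$ on the component together with $f^n(x)\in(P_l,P_r)$ forces $f^n(a)=P_l$, contrary to $f^n(a^+)>P_l$.
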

\begin{proof}
Consider $(\xi,\zeta)\in \CB'\cap\CM_{0}$ and the associated piece of unstable leaf intersected with the mille-feuilles
$W:=W^{u}_{loc}(\xi,\zeta)\cap \CB'$. This piece of unstable leaf has no beaks. Remember that $F$ lets verticals invariant  and contracts them. Therefore, $F^{n}(\xi,\zeta)$ also belongs to $\CB$. The image $F^{n}(W)$ is a one-dimensional manifold which joins the left hand side of $\CB$ to the right hand side. 

We want to check that $F^{n}(W) $ is a leaf of the mille-feuilles $\CM_{0}$. This holds if and only if $F^{n}(W)$ is a piece of local unstable manifold and has no beaks (even on the borders).
By construction, it is a piece of global unstable manifold. We thus just have to check that it contains no beaks.

Assume, by contradiction, it has some beak over the interval $[P_{l},P_{r}]$. This beaks cannot be the image of some ``previous'' beak in $W$. Indeed, the dynamics expands the unstable direction (unless it creates discontinuity) and by definition there were no beaks in $W$. \emph{A fortiori} there is no beak in $W\cap \CB'$. 

This shows that the unique possibility is that the beak is created in $W\cap \CB'$ while taking the image by some $F^{j}$ with $0\le j\le n-1$. Seeing this in the one-dimensional system $([-1,1],f)$, this means that $F^{j}([a,b])$ contains $0$. This would also create a beak in the whole vertical, thus in $W^{u}_{loc}(x,y)\cap \CB'$ too. This would be in contradiction with  $F^{n}(x,y)\in \CM_{0}$.  
\end{proof}

\begin{remark}
\label{rem-propmarkov}
Due to Lemma \ref{lem-nobeaks}, we say that the mille-feuilles has the Markov property. 
$\blacksquare$\end{remark}

This last lemma shows that the first-return map into $\CM_{0}$ has good dynamical properties inherited from the Markov property: considering $(x,y)\CM_{0}$ and in the interior of $\CB$, we say that $n$ is the \emph{first return-time} into $\CM_{0}$ if $F^{n}(x,y)\in \CM_{0}$ and 
\begin{description}
\item[ $\bullet$] either $F^{n}(x,y)$ belongs to the interior of $\CB$,
\item[$\bullet$] or it is in the border for the topology in the unstable manifolds of such points. This means that $F^{n}(x,y')$ is accumulated by points $F^{n}(x',y')\in \CM_{0}$ which are in the interior of $\CB$ and such that $(x',y')\in W^{u}_{loc}(x,y)\cap \CM_{0}$. 
\end{description}

For the rest of the paper we denote by $\tau(x,y)$ the first return-time. If it does not exists, we simply say $\tau(x,y)=+\8$. As the first return-time is constant on verticals, we shall also write $\tau(x)$ if there is no ambiguity. 

For $(x,y)\in \CM_{0}$ and in the interior of $\CB$  and $n$ its first return-time, we denote by $\CB(x,y,n)$ the vertical band associated to $(x,y)$ and time $n$. 

The first return map $(x,y)\mapsto F^{\tau(x,y)}(x,y)$ is denoted by $\Phi$. 

\begin{remark}
\label{rem-bandas}
Two different vertical bands $\CB(x,y,n)$ and $\CB(x',y',n')$ have empty interior intersection; more precisely, they can coincide only on one single vertical (respectively  border from the left hand side and from the right hand side).
$\blacksquare$\end{remark}

The set of points in $\CM_{0}$ with finite first return-time may be empty. It is however possible to ensure this set is non-empty. 
Pick some periodic point $P=(x,y)$. It satisfies the past-stabilization property, thus has a piece of local unstable manifold $W^{u}_{lot}(P)$ with length $\delta(P)$. Then, adjust the length $\wh\delta<<\delta(P)$ in the construction of the mille-feuilles such that two consecutive periodic points of the $f$-orbits define an interval which contains $\pi_{1}(P)$. The assumption $\wh\delta<<\delta(P)$ shows that $P$ belongs to the mille-feuilles. As $P$ is periodic, it returns into it by iterations of $F$. 
For the rest of this section, we assume that the set of points with finite first return-time into the mille-feuilles is non-empty.

The different  first-returns generate countably many vertical bands, say $V_{0},V_{1},\ldots$ with disjoint interiors. This defines a ``partition'' called $\CV$:
each band $V_{i}$ is associated to an integer $n_{i}$, and $F^{n_{i}}(V_{i})$ is a horizontal stripe in $\CB$. We denote it by $G_{i}$. By definition of the first return-time, these stripes $G_{i}$ have disjoint interior in $\CB$. We denote by $\CG$ the collection of stripes $G_{i}$. 
Moreover Lemma \ref{lem-nobeaks} yields 
$$F^{n_{i}}(V_{i}\cap \CM_{0})=G_{i}\cap \CM_{0}.$$ 
We can thus consider the induced partitions, $\disp\bigvee_{k=0}^{+\8}\Phi^{-k}\CV$ and $\disp\bigvee_{k=0}^{+\8}\Phi^{-k}\CG$. Formally, 
the dynamics $\Phi$ on $\disp\bigvee_{k=0}^{+\8}\Phi^{-k}\CV\times\disp\bigvee_{k=0}^{+\8}\Phi^{-k}\CG$  is orbit-equivalent to the shift on $\{0,1,\ldots,\}^{\Z}$. 
The one dimensional dynamics $\pi_{1}\circ \Phi$ on $\disp\bigvee_{k=0}^{+\8}\Phi^{-k}\CV$ is orbit-equivalent to  the one-side shift $\{0,1,\ldots,\}^{\N}$. We developp now this later point.

\subsection{One dimensional sub-system}
We define a dynamics $\phi$ on $[P_{l},P_{r}]\cap \disp\bigvee_{k=0}^{+\8}\Phi^{-k}\CV$ by 
$$\phi(x)=\pi_{1}\circ \Phi(x,y),$$
where $y$ is any point such that $(x,y)\in \CM_{0}$. 
Equivalently we have $\phi(x)=f^{n_{i}}(x)$ if $(x,y)$ belongs to $V_{n_{i}}$. 
This can also be written 
$$\phi(x)=f^{\tau(x)}(x).$$
We point out that $\tau(x)$ is not necessarily the first return-time in $[P_{l},P_{r}]$ by iterations of $f$, even if it is the first return-time in $\CM_{0}$ by iterations of $F$ for $(x,y)$. Indeed, it may happen that $F^{k}(x,y)$  belongs to $\CB$ but not to $\CM_{0}$.

The  partition  in vertical band generates a trace on $[P_{l},P_{r}]$: $K_{n_{i}}:=V_{n_{i}}\cap [P_{l},P_{r}]$. This new partition is denoted by $\CK$.

\begin{lemma}
\label{lem-dense-interval}
The set $[P_{l},P_{r}]\cap \disp\bigvee_{k=0}^{+\8}\Phi^{-k}\CV$ is dense in $[P_{l},P_{r}]$.
\end{lemma}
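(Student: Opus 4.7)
The plan is to exploit the piecewise expanding Markov structure of $\phi$ on $D:=\bigcup_i K_{n_i}$ guaranteed by Lemma~\ref{lem-nobeaks}: each branch $\phi|_{K_{n_i}}=f^{n_i}|_{K_{n_i}}$ is an expanding bijection from $K_{n_i}$ onto $[P_l,P_r]$ with derivative at least $(\sqrt 2)^{n_i}$. Because $\tau$ depends only on $x$, the set $D$ coincides with $\{x\in[P_l,P_r]: f^n(x)\in[P_l,P_r]\text{ for some }n\ge 1\}$, and the intersection of $\bigvee_{k=0}^{+\infty}\Phi^{-k}\mathcal{V}$ with $[P_l,P_r]$ is precisely $\bigcap_{k\ge 0}\phi^{-k}(D)$.

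Step 1 is to show that $D$ is dense in $[P_l,P_r]$. Fix a nonempty open interval $J\subset[P_l,P_r]$ and iterate it by $f$. As long as $0\notin f^k(J)$, the image remains a connected interval of length at least $(\sqrt 2)^k|J|$; hence eventually either $f^k(J)$ is long enough to meet $[P_l,P_r]$, or some iterate contains $0$. In the latter case $f^{k+1}(J)$ splits into two sub-intervals with an endpoint at $1$ or $-1$, and by the hypotheses $f(\widehat\delta)<-\widehat\delta$, $f(-\widehat\delta)>\widehat\delta$ together with the $\widehat\delta$-density of the periodic orbit through $P_l,P_r$, each such sub-interval must cross $[P_l,P_r]$ after a uniformly bounded number of additional iterates. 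Either way some $x\in J$ has $f^n(x)\in[P_l,P_r]$ and is therefore in $D$. This is the most delicate step, because of the need to control the two pieces produced by a cut at the critical point.

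Step 2 combines density of $D$ with the Markov property of $\phi$ via a nested intervals argument. Given a nonempty open $J\subset[P_l,P_r]$, Step~1 yields an index $i_0$ with $J\cap K_{n_{i_0}}^\circ\neq\emptyset$ (the boundary points of the intervals $K_{n_i}$ form a countable set, so the interiors are still dense). Choose a nondegenerate closed subinterval $J_0\subset J\cap K_{n_{i_0}}$; its image $\phi(J_0)$ is a nondegenerate closed subinterval of $[P_l,P_r]$. Apply Step~1 to the interior of $\phi(J_0)$ to find an index $i_1$ with $\phi(J_0)\cap K_{n_{i_1}}^\circ\neq\emptyset$, and pull back by the homeomorphism $(\phi|_{K_{n_{i_0}}})^{-1}$ to obtain a nondegenerate closed subinterval $J_1\subset J_0$ on which $\phi^2$ is defined with $\phi(J_1)\subset K_{n_{i_1}}$. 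Inductively construct nested nonempty closed intervals $J_0\supset J_1\supset J_2\supset\cdots$ satisfying $\phi^j(J_k)\subset K_{n_{i_j}}$ for every $j\le k$. Since each $J_k$ is nonempty and closed, $\bigcap_k J_k\neq\emptyset$, and any of its points lies in $J\cap\bigcap_{k\ge 0}\phi^{-k}(D)$, completing the proof.
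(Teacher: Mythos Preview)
Your Step 1 contains a genuine gap, and it comes precisely from the sentence ``Because $\tau$ depends only on $x$, the set $D$ coincides with $\{x\in[P_l,P_r]: f^n(x)\in[P_l,P_r]\text{ for some }n\ge 1\}$.'' This identification is exactly what the paper warns against just before the lemma: it may happen that $F^{k}(x,y)$ belongs to the band $\mathcal{B}$ but not to $\mathcal{M}_0$. Concretely, $\tau(x)<\infty$ requires not only $f^n(x)\in[P_l,P_r]$ but also that the unstable leaf through $(x,y)\in\mathcal{M}_0$, pushed forward by $F^n$, is again a full leaf of $\mathcal{M}_0$, i.e.\ a beak-free graph over all of $[P_l,P_r]$. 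Your forward-iteration argument only produces a point $x\in J$ with $f^n(x)\in[P_l,P_r]$; at that moment the connected piece of $f^n([P_l,P_r])$ containing $f^n(x)$ may well fail to cover $[P_l,P_r]$, so you cannot conclude $x\in D$. To make the forward argument work you would have to show that some subinterval of $J$ maps diffeomorphically, with no cuts, onto an interval \emph{containing} $[P_l,P_r]$ --- a much stronger statement than ``meets'' or ``crosses'', and one that is not obviously handled by your cut analysis.

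The paper circumvents this difficulty by a different mechanism. It first produces a $\Phi$-fixed (hence $F$-periodic) point $P=(x_P,y_P)\in\mathcal{M}_0$ from the Markov structure of a single band $\mathcal{V}_{n_i}$. Since $\bigcup_j f^{-j}(\{x_P\})$ is dense in $[-1,1]$, one picks any $f$-preimage $x\in(P_l,P_r)$ of $x_P$, chooses $n$ large, and lets $V$ be the connected component of $\Phi^{-np}(\mathcal{B})$ containing $P$ and $V'\subset\mathcal{B}$ its pull-back through $x$. For $n$ large enough $V'$ is never cut along the way to $V$, and then $F^{j+k_n}$ maps each leaf of $V'\cap\mathcal{M}_0$ onto a full leaf of $\mathcal{M}_0$. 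The periodicity of $P$ is what guarantees the ``leaf stretches across $\mathcal{B}$'' conclusion that your argument is missing.

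Your Step 2, on the other hand, is correct and is actually cleaner than what the paper does: the paper's proof really only establishes that $D$ is dense (which is all it uses later), whereas your nested-interval argument neatly upgrades this to density of $\bigcap_{k\ge 0}\phi^{-k}(D)$ using the full-branch Markov property. So the strategy is sound; the gap is entirely in the identification of $D$ at the start of Step 1.
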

\begin{proof}
Consider a vertical band $\CV_{n_{i}}$ and its image $\CG_{n_{i}}$ by $\Phi$. These two sets intersect themselves and there is a unique $n_{i}$-periodic point in this intersection (with convention $\Phi_{\CV_{n_{i}}}=F^{n_{i}})$).

This proves that there exists periodic points in $\CM_{0}$. 

Hence, consider some $p$-periodic point, say $P=(x_{P},y_{P})$, in $\CM_{0}$. The set $\disp\cup_{j}f^{-j}(\{x_{P}\})$ is dense in $[-1,1]$. Consider $j$  and $x$ in $]P_{l},P_{r}[$ such that $f^{j}(x)=x_{P}$. Pick any $Q$ in $\Lambda$ satisfying $\pi_{1}(Q)=x$. Hence, $F^{j}(Q)$ is in the same vertical line than $P$ (see Figure \ref{Fig-dense}).

Now, consider for some integer $n$ (supposed very big) the connected component of  $\Phi^{-np}(\CB)$ which contains $P$. This is a vertical band, say $V$. 
This set also contains $F^{-j}(Q)$. Then,consider the connected component of $F^{-j}(V)$ which contains $Q$. This is a vertical band, say $V'$. We adjust the integer $n$ such that $V'\subset \inte\CB$. This is possible because $x$ belongs to $]P_{l},P_{r}[$. We also assume that $n$ is sufficiently big such that $f^{k}(V'\cap [P_{l},P_{r}])$ never contains $0$ for $0\le k\le j$. In other words, this interval is never cut (by iteration of $f$) and its image by $f$ is the whole interval $V\cap[P_{l},P_{r}]$.  
Finally  consider any $M$ in $V'\cap \CM_{0}$. The unstable local leaf $W^{u}_{loc}(M)$ overlaps $V'$ in both sides (because it overlaps $\CB$). Our assumptions on $n$ show that $F^{j}(W^{u}_{loc}(M)\cap V')$ is a piece of unstable leaf which overlaps in both sides $V$. Moreover $F^{j}(M)$ is in the band $V$. Say $\Phi^{np}(P)=:F^{k_{n}}(P)$; then the image by $F^{k_{n}}$ of $F^{j}(W^{u}_{loc}(M)\cap V')$ is a piece of unstable manifold which overlaps $\CB$ and $F^{j+k_{n}}(M)$ is in the band $\CB$. Consequently $j+k_{n}$ is a return-time for $M$ into $\CM_{0}$.

\begin{figure}[htbp]
\begin{center}
\includegraphics[scale=0.5]{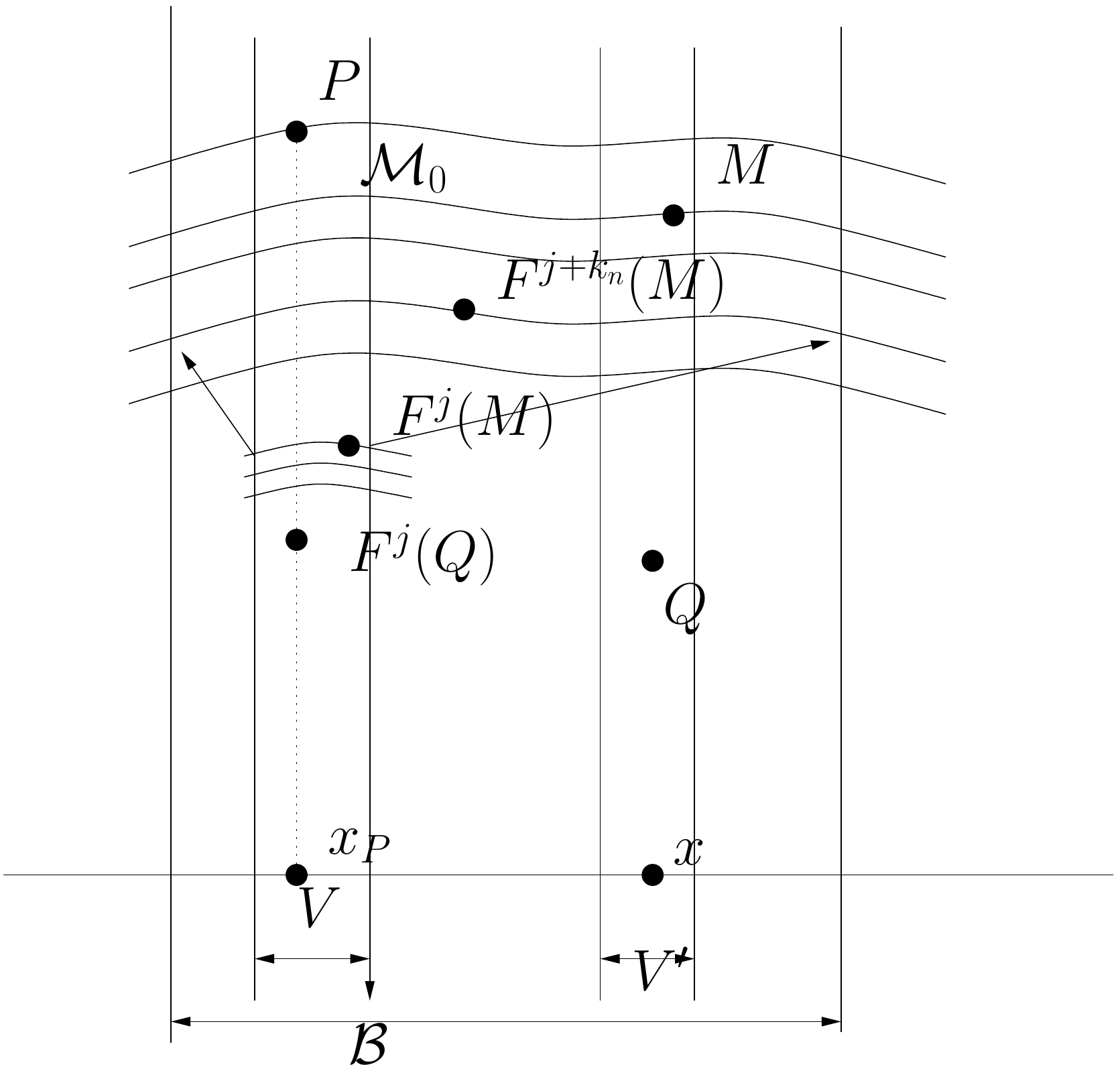}
\caption{density of $\CK$}
\label{Fig-dense}
\end{center}
\end{figure}

\end{proof}

We fix some unstable leaf $\CF_{0}$ of the mille-feuilles and consider it as the reference leaf. The projection $\pi_{\CF_{0}}$ on $\CF_{0}$ is defined by 
$$\pi_{\CF_{0}}(x,y)=(x,y_{0}),$$
where $y_{0}$ is such that $(x,y_{0})\in \CF_{0}$. 
To simplify the notations,and if it does not make difficulties to be understood, we shall also write $\pi_{\CF_{0}}(x)=y_{0}$, to mean that $(x,\pi_{\CF_{0}}(x))$ belongs to $\CF_{0}$. 

\bigskip
If $x$ and $x'$ are in $[P_{l},P_{r}]$, we set 
$$d(x,x')=\frac1{2^{\max\{n,\ |f^{n}(x)-f^{n}(x')|\le \wh\delta\}}},$$
where $\wh\delta$ is  defined in \eqref{def-delta-band} and is bigger than the size of the band $\CB$. 
As $f$ is expanding, if $x\neq x'$, eventually, $|f^{n}(x)-f^{n}(x')|>\wh\delta$ if the interval $[x,x']$ is not cut by 0. If it is cut by 0 (with length smaller than $\wh\delta$) at some iterate, say $n_{0}$, condition \eqref{equ2-condi-deltabanda} shows that one point is sent to $[-1,-\wh\delta[$ and the other one to $]\wh\delta,1]$. At that moment $n_{0}+1$, $|f^{n}(x)-f^{n}(x')|>\wh\delta$. In both cases, ${\max\{n,\ |f^{n}(x)-f^{n}(x')|\le \wh\delta\}}<+\8$. 

As $f$ is expanding by a factor at least $\sqrt2$,  for every $x$ and $x'$
\begin{equation}
\label{equ-majo-ditance}
|x-x'|\le 2^{\frac{\log\wh\delta}{\log2}+\frac12}\sqrt{d(x,x')}.
\end{equation}

\begin{definition}
\label{def-Cgamma}
We define the set $\CC^{\gamma}$ of continuous functions  $\varphi:[P_{l},P_{r}]\to\R$ satisfying
$$\sup_{x\neq x'}\frac{|\varphi(x)-\varphi(x')|}{d^{\gamma}(x,x')}<+\8.$$
 For $\varphi$ in $\CC^{\gamma}$, we set 
 $$||\varphi||_{\gamma}:=\sup_{x\neq x'}\frac{|\varphi(x)-\varphi(x')|}{d^{\gamma}(x,x')}+||\varphi||_{\8}.$$
\end{definition}
Clearly a function in $\CC^{\gamma}$ is continuous because $d(x,x')$ goes to 0 if $x$ goes to $x'$. Then  $||\varphi||_{\8}$ and $||\varphi||_{\gamma}$ are well defined. 

We let the reader check the next result:

\begin{proposition}
\label{prop-ccgammatoubon}
$||\ ||_{\gamma}$ is a norm. The normed space $(\CC^{\gamma},||\ ||_{\gamma})$ is a Banach space. 
\end{proposition}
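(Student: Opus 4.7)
The statement has two parts: verifying the norm axioms, and proving completeness. Both are fairly standard once the right observations are in place, so the plan is routine.

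For the norm part, I would simply check the three axioms. Write $[\varphi]_\gamma := \sup_{x\neq x'} |\varphi(x)-\varphi(x')|/d^\gamma(x,x')$, so that $\|\varphi\|_\gamma = [\varphi]_\gamma + \|\varphi\|_\infty$. Non-negativity is immediate; separation follows because $\|\varphi\|_\gamma=0$ forces $\|\varphi\|_\infty=0$, hence $\varphi\equiv 0$; absolute homogeneity holds because both $[\,\cdot\,]_\gamma$ and $\|\cdot\|_\infty$ are positively homogeneous of degree one; and the triangle inequality is inherited summand-wise, using $|(\varphi+\psi)(x)-(\varphi+\psi)(x')| \le |\varphi(x)-\varphi(x')| + |\psi(x)-\psi(x')|$ for the seminorm part.

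For completeness, let $(\varphi_n)$ be a Cauchy sequence in $(\mathcal{C}^\gamma,\|\cdot\|_\gamma)$. Since $\|\cdot\|_\infty \le \|\cdot\|_\gamma$, the sequence is uniformly Cauchy in $C([P_l,P_r])$, hence converges uniformly to some continuous $\varphi\colon[P_l,P_r]\to\mathbb{R}$. I claim $\varphi\in\mathcal{C}^\gamma$ and $\varphi_n\to\varphi$ in $\|\cdot\|_\gamma$. For the first claim, $(\,[\varphi_n]_\gamma\,)$ is bounded (being Cauchy in $\mathbb{R}$), say by $K$. For any $x\neq x'$, pass to the limit in $n$:
\[
\frac{|\varphi(x)-\varphi(x')|}{d^\gamma(x,x')} = \lim_{n\to\infty}\frac{|\varphi_n(x)-\varphi_n(x')|}{d^\gamma(x,x')} \le K,
\]
so $[\varphi]_\gamma\le K<\infty$ and $\varphi\in\mathcal{C}^\gamma$.

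For norm convergence, given $\varepsilon>0$ choose $N$ such that $\|\varphi_n-\varphi_m\|_\gamma<\varepsilon$ for $n,m\ge N$. For fixed $n\ge N$ and any $x\neq x'$,
\[
\frac{|(\varphi_n-\varphi_m)(x)-(\varphi_n-\varphi_m)(x')|}{d^\gamma(x,x')} \le \varepsilon,
\]
and letting $m\to\infty$ using pointwise convergence gives the same bound with $\varphi_m$ replaced by $\varphi$. Taking the supremum, $[\varphi_n-\varphi]_\gamma\le\varepsilon$. Combined with $\|\varphi_n-\varphi\|_\infty\le\varepsilon$ (for $n$ large, by uniform convergence), one concludes $\|\varphi_n-\varphi\|_\gamma\le 2\varepsilon$, proving $\varphi_n\to\varphi$ in $\mathcal{C}^\gamma$.

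The main (minor) subtlety is ensuring that the limit continuous function really inherits the Hölder-type bound in the metric $d$; this is exactly the Fatou-style passage to the limit above, which works because for each \emph{fixed} pair $(x,x')$ the denominator $d^\gamma(x,x')$ is a positive constant, so convergence of numerators transfers directly. No use of the underlying dynamics $f$ or properties of $d$ beyond its being a genuine metric (finite, positive off the diagonal) is required for this argument.
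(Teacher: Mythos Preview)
Your proof is correct and entirely standard; the paper itself does not give a proof at all but simply states ``We let the reader check the next result,'' so your argument is precisely the routine verification the authors had in mind. One small remark: you do not even need $d$ to satisfy the triangle inequality---as you implicitly note, all that is used is that $d^\gamma(x,x')$ is a fixed positive number for each pair $x\neq x'$, and this is exactly what the paper establishes just before the definition of $\mathcal{C}^\gamma$.
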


\section{Proof of Theorem D}\label{sec-thD}
\paragraph{Notation.} In this section we will consider several dynamical systems: $(\Lambda, F)$, $(\CM_{0},\Phi)$, and $([P_{l},P_{r}],\phi)$. Then, for each Birkhoff sum we shall write with respect to which dynamics it is considered. Namely we will write $S_{n}^{F}$, $S_{n}^{\Phi}$ and $S_{n}^{\phi}$. 

\subsection{Local equilibrium state for induced map}\label{subsec-reduc1dim}
\subsubsection{Reduction to a one-dimensional dynamics}
Let $A_{0}$ be a H\"older continuous function on $[-1,1]^{2}$. We set 
$$\omega(x,y)=\sum_{k=0}^{+\8}A_{0}\circ F^{k}(x,y)-A_{0}\circ F^{k}\circ \pi_{\CF_{0}}(x,y).$$
Assume $\tau(x,y)=n$ and set $\pi_{\CF_{0}}(x)=y_{0}$. Then we have 
\begin{eqnarray*}
\omega(x,y)&=&\sum_{k=0}^{+\8}A_{0}\circ F^{k}(x,y)-A_{0}\circ F^{k}\circ \pi_{\CF_{0}}(x,y)\\
&=& S_{n}^{F}(A_{0})(x,y)-S_{n}^{F}(A_{0})(x,y_{0})+\sum_{k=0}^{+\8}A_{0}\circ F^{k}(F^{n}(x,y))-A_{0}\circ F^{k}(F^{n}(x,y_{0}))\\
&=& S_{n}^{F}(A_{0})(x,y)-S_{n}^{F}(A_{0})(x,y_{0})+\omega(\Phi(x,y))-\sum_{k=0}^{+\8}A_{0}\circ F^{k}\circ\pi_{\CF_{0}}\circ \Phi(x,y)-A_{0}\circ F^{k}(F^{n}(x,y_{0})).
\end{eqnarray*}
 This yields
 $$S_{n}^{F}(A_{0})(x,y)=S_{n}^{F}(A_{0})(x,y_{0})-\sum_{k=0}^{+\8}\left(A_{0}\circ F^{k}\circ\pi_{\CF_{0}}\circ \Phi(x,y)-A_{0}\circ F^{k}(F^{n}(x,y_{0}))\right)+\omega(x,y)-\omega\circ\Phi(x,y).$$
Note that $\disp A(x):=S_{n}(A_{0})(x,y_{0})-\sum_{k=0}^{+\8}\left(A_{0}\circ F^{k}\circ\pi_{\CF_{0}}\circ \Phi(x,y)-A_{0}\circ F^{k}(F^{n}(x,y_{0}))\right)$ does not depend on $y$. We have 
\begin{equation}
\label{equ1-cobord}
S_{n}^{F}(A_{0})(x,y)=A(x)+\omega\circ \Phi(x,y)-\omega(x,y).
\end{equation}
Let $Z$ be a real number. The last equality shows that 
it is equivalent to find an equilibrium state for $(x,y)\mapsto S_{\tau(x,y}^{F}(A_{0})(x,y)-\tau(x,y)Z$ (and for the dynamical system $(\disp\bigvee_{k=0}^{+\8}\Phi^{-k}\CV\times\disp\bigvee_{k=0}^{+\8}\Phi^{-k}\CG,\Phi)$) or for $(x,y)\mapsto A(x)-\tau(x).Z$. 

\subsubsection{Study of the one-dimensional dynamics}\label{subsubsec-thermorenaud}

\begin{lemma}
\label{lem-bongamma}
There exists a positive $\gamma$  and a constant $C_{A}$ such that for every $x$ and $x'$ in the same $K_{n}$, 
$$\left|A(x)-A(x')\right|\le C_{A}d^{\gamma}(\phi(x),\phi(x')).$$ 
\end{lemma}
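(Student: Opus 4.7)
\medskip\noindent
The plan is to estimate the decomposition $A(x)=S_{n}^{F}(A_{0})(x,y_{0})-R(x)$, where
$$R(x):=\sum_{k=0}^{+\infty}\bigl(A_{0}\circ F^{k}\circ\pi_{\CF_{0}}\circ\Phi(x,y)-A_{0}\circ F^{k}(F^{n}(x,y_{0}))\bigr),$$
separately on its prefix and its tail. Fix $x,x'\in K_{n}$, set $y_{0}:=\pi_{\CF_{0}}(x)$, $y_{0}':=\pi_{\CF_{0}}(x')$, let $\beta\in(0,1]$ be the H\"older exponent of $A_{0}$, and let $m$ be the integer with $d(\phi(x),\phi(x'))=2^{-m}$. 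The sign condition \eqref{equ2-condi-deltabanda} ensures that a single cut of $[\phi(x),\phi(x')]$ by some $f^{j}$ would push the two images apart by more than $\wh\delta$; hence no cut occurs for $k\le m$, and backward contraction (by a factor $\sqrt{2}$ per step) yields $|f^{k}(\phi(x))-f^{k}(\phi(x'))|\le(\sqrt{2})^{k-m}\wh\delta$ throughout that range.

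\medskip\noindent
For the prefix $S_{n}^{F}(A_{0})(x,y_{0})-S_{n}^{F}(A_{0})(x',y_{0}')$, the assumption $x,x'\in K_{n}$ keeps $f^{k}(x),f^{k}(x')$ in a common monotonicity branch of $f$ for $0\le k\le n-1$, so $|f^{k}(x)-f^{k}(x')|\le(\sqrt{2})^{k-n}|\phi(x)-\phi(x')|$. The points $(x,y_{0}),(x',y_{0}')$ lie on $\CF_{0}$, and the absence of a cut keeps $F^{k}(x,y_{0}),F^{k}(x',y_{0}')$ on a common $1/\alpha$-Lipschitz unstable leaf above $[f^{k}(x),f^{k}(x')]$, so their Euclidean separation is comparable to $|f^{k}(x)-f^{k}(x')|$. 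H\"older continuity of $A_{0}$, a geometric summation, and \eqref{equ-majo-ditance} combine to give
$$|S_{n}^{F}(A_{0})(x,y_{0})-S_{n}^{F}(A_{0})(x',y_{0}')|\le C_{1}\,d(\phi(x),\phi(x'))^{\beta/2}.$$

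\medskip\noindent
For the tail, set $p_{k}:=F^{k}(\pi_{\CF_{0}}(\Phi(x,y)))$, $q_{k}:=F^{n+k}(x,y_{0})$, and define $p_{k}',q_{k}'$ analogously from $x'$. Each pair $(p_{k},q_{k}),(p_{k}',q_{k}')$ sits on a common vertical line with separation at most $2^{1-k}$, since $|\partial g/\partial y|\le 1/2$; this yields the \emph{vertical bound}
$$|A_{0}(p_{k})-A_{0}(q_{k})|+|A_{0}(p_{k}')-A_{0}(q_{k}')|\le C_{v}\cdot 2^{-k\beta}.$$
For $0\le k\le m$, the no-cut property above places $(p_{k},p_{k}')$ and $(q_{k},q_{k}')$ on common $1/\alpha$-Lipschitz unstable leaves with $x$-coordinates $f^{k}(\phi(x)),f^{k}(\phi(x'))$, giving the \emph{horizontal bound}
$$|A_{0}(p_{k})-A_{0}(p_{k}')|+|A_{0}(q_{k})-A_{0}(q_{k}')|\le C_{h}\cdot(\sqrt{2})^{(k-m)\beta}.$$
Bounding the summand of $R(x)-R(x')$ at level $k$ by whichever of these two is smaller and splitting the sum at the threshold $k=\lfloor m/3\rfloor$, where the two geometric rates coincide, gives $|R(x)-R(x')|\le C_{2}\,d(\phi(x),\phi(x'))^{\beta/3}$, whence the lemma follows with $\gamma:=\beta/3$.

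\medskip\noindent
The main obstacle is the tail estimate: each summand admits two competing H\"older controls, a vertical one valid for every $k$ (via uniform vertical contraction) and a horizontal one valid only up to the no-cut horizon $m$ (via the unstable-leaf $1/\alpha$-Lipschitz structure), and the choice of the balancing threshold $k=m/3$ is what produces the final exponent $\beta/3$. Once the no-cut horizon is identified from \eqref{equ2-condi-deltabanda}, the rest is routine H\"older-plus-geometric-series manipulation.
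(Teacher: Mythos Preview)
Your proof is correct and follows essentially the same strategy as the paper: bound the tail sum by splitting it into a range where the horizontal (same-leaf) comparison dominates and a range where the vertical contraction dominates. The only notable differences are that you handle the prefix $S_{n}^{F}(A_{0})(x,y_{0})-S_{n}^{F}(A_{0})(x',y_{0}')$ explicitly (the paper's proof omits it, treating it as routine), and you split the tail at $k=\lfloor m/3\rfloor$ rather than at $m/2$ as the paper does; your choice is the actual crossover of the two geometric rates and yields the sharper exponent $\gamma=\beta/3$, whereas the paper's split would give $\gamma=\beta/4$, but either value suffices for the lemma.
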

\begin{proof}
We pick $x$ and $x'$ in the same $K_{n}$. By definition we have $\tau(x)=\tau(x')=n$. Then 
$$\max\{k,\ |f^{k}(x)-f^{k}(x')|\le \delta\}=n+m,$$
for some non-negative integer $m$. We have $d(\phi(x),\phi(x'))=2^{-m}$.

We have to compute the difference 
$$\sum_{k=0}^{+\8}A_{0}\circ F^{k}\circ\pi_{\CF_{0}}\circ\Phi(x,\pi_{\CF_{0}}(x))-A_{0}\circ F^{k}\circ \Phi(x,\pi_{\CF_{0}}(x))-\sum_{k=0}^{+\8}A_{0}\circ F^{k}\circ\pi_{\CF_{0}}\circ\Phi(x',\pi_{\CF_{0}}(x'))-A_{0}\circ F^{k}\circ \Phi(x',\pi_{\CF_{0}}(x')).$$
It is well-known that this is done by cutting the sum in two parts.
 
On the one hand, we compute bound for 
$$\sum_{k=0}^{\frac{m}2}A_{0}\circ F^{k}\circ\pi_{\CF_{0}}\circ\Phi(x,\pi_{\CF_{0}}(x))-A_{0}\circ F^{k}\circ\pi_{\CF_{0}}\circ\Phi(x',\pi_{\CF_{0}}(x'))$$
and 
$$\sum_{k=0}^{\frac{m}2}A_{0}\circ F^{k}\circ \Phi(x,\pi_{\CF_{0}}(x))-A_{0}\circ F^{k}\circ \Phi(x',\pi_{\CF_{0}}(x')).$$
On the other hand we compute bound for 
$$\sum_{k=\frac{m}2+1}^{+\8}A_{0}\circ F^{k}\circ\pi_{\CF_{0}}\circ\Phi(x,\pi_{\CF_{0}}(x))-A_{0}\circ F^{k}\circ \Phi(x,\pi_{\CF_{0}}(x))$$
and 
$$\sum_{k=\frac{m}2+1}^{+\8}A_{0}\circ F^{k}\circ\pi_{\CF_{0}}\circ\Phi(x',\pi_{\CF_{0}}(x'))-A_{0}\circ F^{k}\circ \Phi(x',\pi_{\CF_{0}}(x')).$$
For both primary terms, we use that $f^{k}(x)$ and $f^{k}(x')$ are close. For both secondary terms we use that $F^{k}\circ\pi_{\CF_{0}}\circ\Phi(x,\pi_{\CF_{0}}(x))$ is close to $F^{k}\circ \Phi(x,\pi_{\CF_{0}}(x))$ and $F^{k}\circ\pi_{\CF_{0}}\circ\Phi(x',\pi_{\CF_{0}}(x'))$ is close to $F^{k}\circ \Phi(x',\pi_{\CF_{0}}(x'))$. We then use the H\"older regularity of $A_{0}$. The unstable leave is a Lipschitz graph, hence $\pi_{\CF_{0}}$ is Lipschitz continuous. 

All this allows to get a bound of the form $C_{A}\theta^{m}$, for some $0<\theta<1$  and $C_{A}$ which only depends on $A_{0}$.  The quantity $\theta^{m}$ is equal to 
 $d^{\gamma}(\phi(x),\phi(x'))$ with $\gamma=\disp\frac{|\log\theta|}{\log2}$. 
\end{proof}

\begin{definition}
\label{def-clz}
We define the transfer operator with parameter $Z$ by 
$$\CL_{Z}(\psi)(x)=\sum_{\phi(\xi)=x}e^{A(\xi)-\tau(\xi)Z}\psi(\xi).$$
\end{definition}

\begin{proposition}
\label{prop-zclz}
There exists a {\em critical} $Z_{c}$ such that for every $Z>Z_{c}$, for every continuous $\psi=[P_{l},P_{r}]\to \R$, for every $x$ in $[P_{l},P_{r}]$, 
the quantity $\CL_{Z}(\psi)(x)$ is well defined. 

$Z_{c}$ is critical in the sense that it is the minimal value with this property. 

For $Z>Z_{c}$, $\CL_{Z}$ is a linear operator acting on the set $\CC^{0}$ of continuous function on $[P_{l},P_{r}]$. 
\end{proposition}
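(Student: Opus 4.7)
The plan is to exploit that, for nonnegative $\psi$, every summand in
$$\mathcal{L}_{Z}(\psi)(x)=\sum_{\phi(\xi)=x}e^{A(\xi)-\tau(\xi)Z}\psi(\xi)$$
is nonnegative, so convergence is monotone in $Z$ and it suffices to establish pointwise finiteness for $Z$ large. First I would uniformly bound each summand: by \eqref{equ1-cobord} the function $A$ consists of a Birkhoff sum $S_{\tau(\xi)}^{F}(A_{0})$ along the reference leaf $\mathcal{F}_{0}$ plus the telescopic remainder $\omega$. A standard distortion computation, using that the unstable leaves of the mille-feuilles are $1/\alpha$-Lipschitz graphs (Theorem \ref{theo-var-instable}), that $A_{0}$ is H\"older, and that $F$ contracts verticals by at least $1/2$, shows that $\omega$ is bounded, so $|A(\xi)|\le \tau(\xi)\|A_{0}\|_{\infty}+C_{0}$ for some $C_{0}$ depending only on $A_{0}$ and $\mathcal{M}_{0}$. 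By Lemma \ref{lem-nobeaks} each cylinder $K_{n}$ supports exactly one $\phi$-preimage of a given $x$, and since the one-dimensional dynamics $f$ is at most $2$-to-$1$ there are at most $2^{n}$ cylinders with $\tau\equiv n$; hence the series is termwise dominated by
$$\|\psi\|_{\infty}\,e^{C_{0}}\sum_{n\ge 1}2^{n}\,e^{n(\|A_{0}\|_{\infty}-Z)},$$
which converges for every $Z>\|A_{0}\|_{\infty}+\log 2$.

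Next I would set
$$Z_{c}:=\inf\bigl\{Z:\mathcal{L}_{Z}(\mathbf{1})(x_{0})<+\infty\bigr\}$$
for a fixed reference point $x_{0}\in\,]P_{l},P_{r}[$, and check that $Z_{c}$ does not depend on $x_{0}$. The key input is the bounded distortion provided by Lemma \ref{lem-bongamma}: on each $K_{n}$ the oscillation of $A$ is controlled by $C_{A}d^{\gamma}(\phi(\xi),\phi(\xi'))$ with $C_{A}$ uniform in $n$. Consequently, for $\xi(x),\xi(x_{0})$ the two preimages lying in the same cylinder $K_{n}$, the ratio $e^{A(\xi(x))}/e^{A(\xi(x_{0}))}$ is bounded by $e^{C_{A}}$ regardless of $n$ (since $d\le 1$). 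This yields a termwise comparison $\mathcal{L}_{Z}(\mathbf{1})(x)\le e^{C_{A}}\mathcal{L}_{Z}(\mathbf{1})(x_{0})$ valid for every $x\in\,]P_{l},P_{r}[$, so that finiteness at one point forces uniform finiteness everywhere, and the threshold $Z_{c}$ is intrinsic.

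It remains to check that $\mathcal{L}_{Z}$ preserves $\mathcal{C}^{0}$. Each inverse branch $x\mapsto \xi(x)\in K_{n}$ of $\phi$ is a homeomorphism of $]P_{l},P_{r}[$ onto $K_{n}$, and by Lemma \ref{lem-bongamma} $A$ is continuous on each $K_{n}$, so every individual summand $x\mapsto e^{A(\xi(x))-nZ}\psi(\xi(x))$ is continuous. The termwise majorant from the first paragraph (enlarged by the factor $e^{C_{A}}$ from the second) is independent of $x$ and summable for $Z>Z_{c}$, so the series converges uniformly in $x$ and its limit $\mathcal{L}_{Z}(\psi)$ is continuous. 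Linearity in $\psi$ is immediate.

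The only slightly delicate point is the passage from pointwise to uniform convergence, which rests on having a distortion constant in Lemma \ref{lem-bongamma} that does not depend on the cylinder $K_{n}$. This uniformity is in turn a consequence of the Markov property (Lemma \ref{lem-nobeaks}) combined with the exponential contraction of the inverse branches of $f^{n}$; without it, cylinder-dependent constants could accumulate with $n$ and destroy both the uniform convergence and the independence of $Z_{c}$ on the reference point.
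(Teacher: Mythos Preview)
Your argument is correct and follows essentially the same route as the paper: reduce everything to convergence of $\mathcal{L}_{Z}(\mathbf{1})$ at a single reference point via the uniform distortion of Lemma~\ref{lem-bongamma}, then obtain continuity from the resulting uniform (in $x$) majorant. The paper's own proof is terser (it largely cites \cite{leplaideur1}) and records in addition the explicit formula $Z_{c}=\limsup_{n}\frac1n\log\sum_{\tau(\xi)=n}e^{A(\xi)}$, but your a~priori estimate showing finiteness for $Z>\|A_{0}\|_{\infty}+\log 2$ is a welcome concrete starting point that the paper omits.
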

\begin{proof}
The proof can be found in \cite{leplaideur1} Subsec. 4.1. 

Using Lemma \ref{lem-bongamma}, we show that convergence for  $Z$, for every $\psi$ and every $x$ is equivalent to convergence for $Z$, $\BBone$ and just one $x$. Then we get 
\begin{equation}
\label{eq-defzc}
Z_{c}=\limsup_{\ninf}\frac1n\log\left(\sum_{\phi(\xi)=x,\ \tau(\xi)=n}e^{A(\xi)}\right).
\end{equation}
Then, $\CL_{Z}$ acts on continuous functions because even if $\phi$ is not defined for every $x$, the inverse branches are well-defined and the operator is Markov: for any $x$ and $x'$ we can associate by pair the pre-images $\xi$ and $\xi'$. Moreover, there is contractions iterating backward. 
\end{proof}

\begin{proposition}
\label{prop-distoclzn}
There exists a positive constant $C_{A}$ which only depends on $A_{0}$ such that for every $Z>Z_{c}$, for every $x$ and $x'$ in $[P_{l},P_{r}]$, 
$$e^{-C_{A}}\le\frac{\CL_{Z}^{n}(\BBone)(x)}{\CL_{Z}^{n}(\BBone)(x')} \le e^{C_{A}}.$$
\end{proposition}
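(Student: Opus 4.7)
The aim is a standard bounded distortion estimate for the transfer operator, and the strategy is to compare $\CL_Z^n(\BBone)(x)$ and $\CL_Z^n(\BBone)(x')$ by setting up a bijection between the preimages of $x$ and $x'$ under $\phi^n$ that preserves the cylinder structure, and then using Lemma \ref{lem-bongamma} with the fact that $\phi$ is uniformly expanding in the metric $d$.

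First, I would iterate the definition of $\CL_Z$ to write
$$\CL_Z^n(\BBone)(x)=\sum_{\phi^n(\xi)=x}\exp\!\Bigl(S_n^\phi(A)(\xi)-Z\,S_n^\phi(\tau)(\xi)\Bigr),$$
and likewise for $x'$. The Markov property of the induced map (Lemma \ref{lem-nobeaks}) guarantees that for each admissible sequence of cylinders $K_{n_{i_0}},\dots,K_{n_{i_{n-1}}}$, there is exactly one preimage $\xi$ of $x$ and exactly one preimage $\xi'$ of $x'$ with $\phi^k(\xi),\phi^k(\xi')\in K_{n_{i_k}}$ for $0\le k\le n-1$. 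This gives a canonical bijection $\xi\leftrightarrow\xi'$. Since $\tau$ is constant on each $K_{n_i}$, the two paired preimages have $\tau(\phi^k(\xi))=\tau(\phi^k(\xi'))$ for every $k$, so the terms $Z\,S_n^\phi(\tau)$ cancel exactly in the ratio.

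Next I would control the difference of Birkhoff sums $S_n^\phi(A)(\xi)-S_n^\phi(A)(\xi')$. For every $k\in\{0,\dots,n-1\}$, the points $\phi^k(\xi)$ and $\phi^k(\xi')$ lie in the same $K_{n_{i_k}}$, and Lemma \ref{lem-bongamma} yields
$$\bigl|A(\phi^k(\xi))-A(\phi^k(\xi'))\bigr|\le C_A\,d^{\gamma}\!\bigl(\phi^{k+1}(\xi),\phi^{k+1}(\xi')\bigr).$$
Because $\phi=f^{\tau}$ with $\tau\ge 1$ and $f$ expands distances in $d$ by a factor of $2$ whenever the interval is not cut (the metric was designed precisely so that $d(f(u),f(v))\ge 2\,d(u,v)$ as long as $d(u,v)\le 1/2$, and cuts are ruled out along the common cylinder path by construction), $\phi$ expands $d$ by a factor $\ge 2$. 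Iterating $n-k-1$ times from $\phi^{k+1}(\xi)$ and $\phi^{k+1}(\xi')$ up to $x$ and $x'$ gives
$$d\bigl(\phi^{k+1}(\xi),\phi^{k+1}(\xi')\bigr)\le 2^{-(n-k-1)}\,d(x,x').$$
Summing the geometric series,
$$\bigl|S_n^\phi(A)(\xi)-S_n^\phi(A)(\xi')\bigr|\le C_A\sum_{k=0}^{n-1}2^{-\gamma(n-k-1)}\,d^{\gamma}(x,x')\le \frac{C_A}{1-2^{-\gamma}},$$
a bound depending only on $A_0$, not on $n$, $\xi$ or $x,x'$. Enlarging the constant $C_A$ once and for all, this uniform bound together with the bijection yields
$$\frac{\CL_Z^n(\BBone)(x)}{\CL_Z^n(\BBone)(x')}=\frac{\sum_\xi e^{S_n^\phi(A)(\xi)-Z\,S_n^\phi(\tau)(\xi)}}{\sum_{\xi'}e^{S_n^\phi(A)(\xi')-Z\,S_n^\phi(\tau)(\xi')}}\in\bigl[e^{-C_A},e^{C_A}\bigr],$$
which is the claim.

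The main obstacle is purely bookkeeping: one must check that the canonical pairing $\xi\leftrightarrow\xi'$ really is a bijection between $\phi^{-n}(x)$ and $\phi^{-n}(x')$ (this is where the Markov property of the mille-feuilles and the density statement of Lemma \ref{lem-dense-interval} are used) and that along the common cylinder itinerary the intervals joining $\phi^k(\xi)$ and $\phi^k(\xi')$ are never cut by $0$, so that the expansion estimate for $d$ applies step by step. Once this is in place the computation is the usual geometric-series distortion argument already employed in \cite{leplaideur1}.
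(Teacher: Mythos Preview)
Your proposal is correct and follows essentially the same route as the paper: pair up the $\phi^{n}$-preimages of $x$ and $x'$ via the Markov structure, note that the $Z\tau$ terms cancel because $\tau$ is constant on each $K_{n_i}$, and control $|S_n^{\phi}(A)(\xi)-S_n^{\phi}(A)(\xi')|$ via Lemma~\ref{lem-bongamma}. The paper's proof is more terse (it simply asserts the Birkhoff-sum bound as ``a direct consequence of Lemma~\ref{lem-bongamma}''), whereas you spell out the geometric-series summation using the built-in expansion of $\phi$ for the metric $d$; one small quibble is that Lemma~\ref{lem-dense-interval} is not actually needed for the bijection---the Markov property alone suffices.
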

\begin{proof}
This is a direct consequence of Lemma \ref{lem-bongamma}. If $\xi$ satisfies $\phi^{n}(\xi)=x$, then there exists $\xi'$ in the same element $\disp\bigvee_{k=0}^{n}\phi^{-k}(\CK)$ satisfying $\phi^{n}(\xi')=x'$. 
Lemma \ref{lem-bongamma} yields
$$\left|S_{n}^{\phi}(A)(\xi)-S_{n}^{\phi}(A)(\xi')\right|\le C_{A},$$
where $S_{n}^{\phi}(A)=A+A\circ\phi+\ldots+A\circ \phi^{n-1}$ and $C_{A}$ is a constant only depending on $A_{0}$. 
\end{proof}

\paragraph{\bf Results following \cite{leplaideur1}.}
We can use a theorem from Ionescu-Tulcea \& Marinescu \cite{ionescu-tulcea-marinescu} and we get for every $Z>Z_{c}$:

$\bullet$ There is an eigen- probability measure $\nu_{Z}$ for $\CL_{Z}^{*}$. The eigenvalue $\lambda_{Z}$ is the spectral radius of $\CL_{Z}$ and $\CL_{Z}^{*}$. 

$\bullet$ On $\CC^{\gamma}$, the spectrum of $\CL_{Z}$ is a single and simple dominated eigenvalue $\lambda_{Z}$, and the rest of the spectrum included into a disk of radius $\rho_{Z}\lambda_{Z}$ with $\rho_{Z}<1$. 

$\bullet$ The H\"older inequalities shows that 
$Z\mapsto\log\lambda_{Z}$ is convex. It is also decreasing and analytic on $]Z_{c}+\8[$.

$\bullet$ The unique (up to a multiplicative constant) eigen-function is 
$$H_{Z}:=\lim_{\ninf}\frac1n\sum_{k=0}^{n-1}\frac{\CL_{Z}^{k}(\BBone)}{\lambda_{Z}^{k}}.$$
It is a positive function in $\CC^{\gamma}$. 

$\bullet$  The measure defined by $d\mu_{Z}:=H_{Z}\,d\nu_{Z}$ is the unique equilibrium state for $([P_{l},P_{r}],\phi)$ associated to $A-Z.\tau$. It is an exact measure, hence mixing, hence ergodic (mixing yields uniqueness of the dominated eigenvalue for $\CL_{Z}$). 

$\bullet$ Uniqueness of the dominated eigenvalue and \cite{Hennion-Herve} chap. 3 yield that $Z\mapsto\lambda_{Z}$ is real analytic on $]Z_{c},+\8[$. 

$\bullet$ For every $x$, the set of pre-images by $\phi$ is dense in $\bigcup_{K\in \CK} K$. Moreover, Lemma \ref{lem-dense-interval} shows that $\bigcup_{K\in \CK} K$ is dense in $[P_{l},P_{r}]$.   

$\bullet$ All these results are valid as soon as $\CL_{Z}(\BBone)$ converges. This holds for $Z=Z_{c}$ if $\CL_{Z_{c}}(\BBone)$ converges (see \cite{leplaideur1} subset. 6.3).

\subsubsection{Extensions to the two-dimensional dynamics}
Copying \cite{leplaideur1}, we claim that $(\CM_{0}, \Phi)$ is the natural extension for $([P_{l},P_{r}],\phi)$. Indeed, and by construction of $\CM_{0}$, the discontinuity  generated by the line $\{x=0\}$ is not seen by $\phi$ and the Markov partition $\CK$. Therefore, for every $Z$, there exists an unique $\Phi$-invariant measure $\wh\mu_{Z}$ such that $\pi_{1*}\wh\mu_{Z}=\mu_{Z}$. 
It satisfies  $$h_{\wh\mu_{Z}}(\Phi)+\int A-Z\tau\,d\wh\mu_{Z}=\log\lambda_{Z}.$$
and it is the unique equilibrium state for $(\CM_{0},\Phi)$ and the potential $A-Z.\tau$. 
 
 By Equality \eqref{equ1-cobord}, $\wh\mu_{Z}$ is also an equilibrium state for $S_{\tau}^{F}(A_{0})-Z.\tau$.

 Moreover, for every $Z>Z_{c}$, we recall that $F$ lets verticals invariant (and contracts them). Moreover the density function $H_{Z}$ is bounded from below away from 0. Therefore, 
 the condition $\int \tau\,d\wh\mu_{Z}<+\8$ is equivalent to $\int\tau\,d\mu_{Z}<+\8$ and is also equivalent to $\int\tau\,d\nu_{Z}<+\8$. 
 
Now,  $\int\tau\,d\nu_{Z}<+\8$ holds if $Z>Z_{c}$ because Lemma \ref{lem-bongamma}
 shows 
 $$\int\tau\,d\nu_{Z}=-e^{\pm C_{A}}\frac{\partial \CL_{Z}(\BBone)(x)}{\partial Z},$$
 for any $x$. 
 
 Then, following \cite{dowker}, there exists $m_{Z}$, a  $F$-invariant probability, such that 
$$\wh\mu_{Z}=\frac{m_{Z}(.\cap \CM_{0})}{m_{Z}(\CM_{0})}.$$

In that case we get 
\begin{equation}
\label{equ-pressZgrand}
h_{m_{Z}}(F)+\int A_{0}\,dm_{Z}=Z+m_{Z}(\CM_{0})\log\lambda_{Z}.
\end{equation}

Moreover \begin{equation}
\label{equ-derivloglambda}
\frac{d\log\lambda_{Z}}{dZ}=\frac{-1}{m_{Z}(\CM_{0})}
\end{equation} 

Again, these results hold, if $\CL_{Z}(\BBone)$ converges for $Z=Z_{c}$ and the existence of $m_{Z}$ from the convergence of $\frac{\partial \CL_{Z}(\BBone)(x)}{\partial Z}$ for $Z=Z_{c}$ (and $x$ is any point). 

\subsection{Relative equilibrium associated to $\CM_{0}$}
\begin{definition}
\label{def-P0m0}
We call relative pressure for $A_{0}$ associated to $\CM_{0}$ the quantity:
$$\CP(A_{0},\CM_{0}):=\sup\left\{h_{m}(F)+ \int A_{0}\,dm,\ m(\CM_{0})>0\right\}.$$
Any $F$-invariant measure giving positive weight to $\CM_{0}$ and realizing this supremum is called a relative equilibrium state associated to $\CM_{0}$. 
\end{definition}
The goal of this subsection is to prove the next proposition and to make precise condition yielding existence (and uniqueness) of a relative equilibrium state associated to $\CM_{0}$. 

\begin{proposition}
\label{prop-maxmZ}
$\CP(A_{0},\CM_{0}):=\sup\left\{h_{m_{Z}}(F)+ \int A_{0}\,dm_{Z},\ Z)>Z_{c}\right\}.$
\end{proposition}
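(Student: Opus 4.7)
The plan is to prove the two inequalities separately. For $\geq$ the argument is immediate: each $m_Z$ with $Z>Z_c$ is an $F$-invariant probability with $m_Z(\CM_0)=1/\int\tau\,d\wh\mu_Z>0$ (from the last paragraph of Subsection \ref{subsec-reduc1dim}), hence $h_{m_Z}(F)+\int A_0\,dm_Z$ is a competitor in the supremum defining $\CP(A_0,\CM_0)$.

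For $\leq$, by affineness of the free energy and ergodic decomposition it suffices to treat an ergodic $F$-invariant $m$ with $m(\CM_0)>0$. Form the induced probability $\wh m:=m(\cdot\cap\CM_0)/m(\CM_0)$, which is $\Phi$-invariant with $\int\tau\,d\wh m=1/m(\CM_0)<\infty$ by Kac's formula. Abramov's formula then gives
\begin{equation*}
h_m(F)+\int A_0\,dm=m(\CM_0)\Bigl[h_{\wh m}(\Phi)+\int S_{\tau}^{F}A_0\,d\wh m\Bigr].
\end{equation*}
The cohomological identity \eqref{equ1-cobord} reduces $\int S_\tau^{F}A_0\,d\wh m$ to $\int A\,d\wh m$, provided the coboundary $\omega$ is $\wh m$-integrable; integrability of $\omega$ follows from an estimate paralleling Lemma \ref{lem-bongamma}, controlling its telescoping sum by the H\"older exponent of $A_0$ together with the geometric contraction rate in the vertical direction. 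Applying then the variational inequality associated to the equilibrium state $\wh\mu_Z$ of $(\CM_0,\Phi)$ with potential $A-Z\tau$, whose pressure equals $\log\lambda_Z$,
\begin{equation*}
h_{\wh m}(\Phi)+\int A\,d\wh m\le\log\lambda_Z+\frac{Z}{m(\CM_0)},
\end{equation*}
and multiplying through by $m(\CM_0)$ yields the one-parameter family of upper bounds
\begin{equation*}
h_m(F)+\int A_0\,dm\le m(\CM_0)\log\lambda_Z+Z,\qquad Z>Z_c.
\end{equation*}

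It remains to compare the envelope of these bounds with the values $h_{m_Z}(F)+\int A_0\,dm_Z=m_Z(\CM_0)\log\lambda_Z+Z$ provided by \eqref{equ-pressZgrand}. Setting $\Psi(Z):=m(\CM_0)\log\lambda_Z+Z$, relation \eqref{equ-derivloglambda} gives $\Psi'(Z)=1-m(\CM_0)/m_Z(\CM_0)$, so $\Psi$ is convex and is minimized at any $Z^*$ with $m_{Z^*}(\CM_0)=m(\CM_0)$; at such a point $\Psi(Z^*)=m_{Z^*}(\CM_0)\log\lambda_{Z^*}+Z^*=h_{m_{Z^*}}(F)+\int A_0\,dm_{Z^*}$ belongs to the right-hand family, and the desired inequality follows. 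The map $Z\mapsto m_Z(\CM_0)$ is continuous and monotone (by real analyticity and convexity of $\log\lambda_Z$) and, since Kac's inequality forces $m(\CM_0)\le 1/\min\tau$, its range is compatible with that constraint; such $Z^*$ exists for $m(\CM_0)$ in the interior of the range, while boundary values are handled by passing to sequences $Z_n\to Z_c^+$ or $Z_n\to+\infty$ and using the continuity of $Z\mapsto h_{m_Z}(F)+\int A_0\,dm_Z$. The main technical obstacles are precisely this boundary analysis (which is also where the condition yielding existence of a relative equilibrium state enters) together with the $\wh m$-integrability of $\omega$; both are routine extensions of the estimates of Lemma \ref{lem-bongamma} and of the analytic properties of $\lambda_Z$ listed after Proposition \ref{prop-distoclzn}.
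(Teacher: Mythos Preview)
Your approach and the paper's are the same in substance: both derive, for every admissible $m$ and every $Z>Z_c$, the inequality
\[
h_m(F)+\int A_0\,dm \;\le\; m(\CM_0)\log\lambda_Z+Z,
\]
which is exactly Lemma~\ref{lem-relbetaZ}, and then optimise. The paper packages the optimisation through the auxiliary pressure $\be\mapsto\CP(\be,\CM_0)$ and the convexity Lemma~\ref{lem-convexite}: once one knows that $\log\lambda_Z$ takes the value $0$ somewhere on $[Z_c,\infty)$, one simply plugs in that $Z_0$, getting $h_m(F)+\int A_0\,dm\le Z_0$ for all $m$ and $h_{m_{Z_0}}(F)+\int A_0\,dm_{Z_0}=Z_0$ from~\eqref{equ-pressZgrand}. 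You instead optimise per measure, looking for $Z^*$ with $m_{Z^*}(\CM_0)=m(\CM_0)$.

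There is a genuine gap in your boundary analysis. The range of $Z\mapsto m_Z(\CM_0)$ need not reach down to $0$: when $\bigl|\frac{d\log\lambda_Z}{dZ}\bigr|$ stays bounded as $Z\downarrow Z_c$ (cases~2 and~3 of the trichotomy following Theorem~\ref{theo-equil-CM0}), one has $a:=\lim_{Z\to Z_c^+}m_Z(\CM_0)>0$, and nothing prevents an ergodic $m$ from satisfying $0<m(\CM_0)<a$. Such an $m(\CM_0)$ is \emph{strictly below} the range, not a boundary value of it, so your limiting argument does not apply. For these $m$ your $\Psi$ is strictly increasing on $(Z_c,\infty)$ and $\inf_Z\Psi(Z)=m(\CM_0)\log\lambda_{Z_c}+Z_c$; to bound this above by $a\log\lambda_{Z_c}+Z_c=\lim_{Z\to Z_c^+}\bigl(h_{m_Z}(F)+\int A_0\,dm_Z\bigr)$ you need $\log\lambda_{Z_c}\ge 0$. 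That inequality is precisely Proposition~\ref{prop-estizc} (via $Z_c\le\CP(0,\CM_0)\Rightarrow\be_c\le 0$), a non-trivial estimate obtained through a separate Bernoulli-measure construction, which your proposal neither states nor invokes. The paper's organisation is cleaner exactly because it isolates this step and then only needs to evaluate at the single parameter where $\lambda_Z=1$, rather than matching $m(\CM_0)$ against the full range of $m_Z(\CM_0)$.

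A minor remark: the integrability of $\omega$ is not an obstacle at all, since the vertical contraction together with the H\"older regularity of $A_0$ makes the defining series geometrically convergent with bounds independent of the point, so $\omega$ is uniformly bounded.
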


\subsubsection{Key estimation for $Z_{c}$}

 Here, follows the key estimation for the proof of Theorem D.

\begin{proposition}
\label{prop-estizc}
Inequality $\disp Z_{c}\le \CP(A_{0},\CM_{0})$ holds. 
\end{proposition}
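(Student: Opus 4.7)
The strategy is to produce, for every $Z<Z_c$, an $F$-invariant probability $m$ charging $\CM_0$ whose free energy for $A_0$ is at least $Z$; the inequality then follows by letting $Z\to Z_c^-$.

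Fix $Z<Z_c$. For each integer $n_0\ge 1$, let $\CL_{n_0,Z}$ be the operator obtained from $\CL_Z$ by retaining in the sum only those inverse branches whose return-time $\tau$ is bounded by $n_0$. Because each cylinder $K_{n_i}$ is mapped bijectively onto $[P_l,P_r]$ by $\phi$ (Markov property, Lemma~\ref{lem-nobeaks}), this is the transfer operator of a \emph{finite full shift} on the symbol set $\{i:n_i\le n_0\}$, with H\"older potential $A-Z\tau$ satisfying the same distortion bound as in Proposition~\ref{prop-distoclzn} (the constant $C_A$ of Lemma~\ref{lem-bongamma} does not depend on $n_0$). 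Classical Ruelle--Perron--Frobenius theory yields a unique equilibrium state $\mu_{n_0,Z}$, a dominant eigenvalue $\Lambda_{n_0,Z}$, and
\begin{equation*}
h_{\mu_{n_0,Z}}(\phi)+\int(A-Z\tau)\,d\mu_{n_0,Z}=\log\Lambda_{n_0,Z}.
\end{equation*}

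Since $Z<Z_c$, the definition of $Z_c$ gives $\CL_Z(\BBone)(x_0)=+\infty$. Monotone convergence yields $\CL_{n_0,Z}(\BBone)(x_0)\uparrow +\infty$ as $n_0\to\infty$, and Gelfand's formula combined with the uniform distortion bound produces
\begin{equation*}
\log\Lambda_{n_0,Z}\ge \log\CL_{n_0,Z}(\BBone)(x_0)-C_A\longrightarrow +\infty.
\end{equation*}
Choose $n_0$ large enough so that $\log\Lambda_{n_0,Z}\ge 0$. Since $\mu_{n_0,Z}$ is supported on points whose $\tau$-value never exceeds $n_0$, we have $\int\tau\,d\mu_{n_0,Z}\le n_0<+\infty$.

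Lift $\mu_{n_0,Z}$ to a $\Phi$-invariant probability $\wh\mu$ on $\CM_0$ via the natural-extension construction of Subsection~\ref{subsubsec-thermorenaud}, then to an $F$-invariant probability $m$ on $\Lambda$ via Dowker's Kakutani skyscraper (as in the paragraph preceding~\eqref{equ-pressZgrand}), so that $m(\CM_0)=1/\int\tau\,d\wh\mu>0$. Abramov's formula, the cohomology identity~\eqref{equ1-cobord} (the coboundary $\omega$ is bounded, hence $\wh\mu$-integrates to zero against $\Phi-\mathrm{id}$), and the bound $\log\Lambda_{n_0,Z}\ge 0$ then give
\begin{equation*}
h_m(F)+\int A_0\,dm=\frac{h_{\wh\mu}(\Phi)+\int A\,d\wh\mu}{\int\tau\,d\wh\mu}=Z+\frac{\log\Lambda_{n_0,Z}}{\int\tau\,d\wh\mu}\ge Z.
\end{equation*}
Hence $\CP(A_0,\CM_0)\ge Z$ for every $Z<Z_c$, so $\CP(A_0,\CM_0)\ge Z_c$. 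The principal difficulty is the verification that the truncated operator $\CL_{n_0,Z}$ defines a finite full-shift subsystem on which the distortion constant is uniform in $n_0$ (so that the spectral-radius-to-pointwise-value comparison survives the limit); once this is granted, the divergence of $\log\Lambda_{n_0,Z}$ is straightforward and the lift/Abramov computation in the final step mirrors the material of Subsection~\ref{subsubsec-thermorenaud}.
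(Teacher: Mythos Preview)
Your argument is correct, but it follows a genuinely different route from the paper's. The paper fixes an integer $n$, keeps \emph{only} the cylinders with return time exactly $n$, and builds by hand the Bernoulli measure $\mu_n$ with weights $p_{n,j}\propto e^{A(M_{n,j})}$ on those cylinders (using the periodic points $M_{n,j}$ as reference). Because $\tau\equiv n$ on this Bernoulli measure, the Abramov/Kac step is trivial ($m_n(\CM_0)=1/n$), and the free energy of the resulting $F$-invariant $m_n$ is, up to $\pm C_A/n$, exactly $\frac1n\log\sum_j e^{A_{n,j}}$. The definition~\eqref{eq-defzc} of $Z_c$ as the $\limsup$ of this very quantity then gives $Z_c\le\CP(A_0,\CM_0)$ immediately.

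Your approach instead truncates by $\tau\le n_0$, invokes the full RPF machinery on the resulting finite subshift, and exploits divergence of $\CL_Z(\BBone)$ for $Z<Z_c$ to force $\log\Lambda_{n_0,Z}\ge 0$. The step $\log\Lambda_{n_0,Z}\ge\log\CL_{n_0,Z}(\BBone)(x_0)-C_A$ is correct (it follows from the supermultiplicativity $\CL^{k+1}(\BBone)(x)\ge e^{-C_A}\CL^k(\BBone)(x)\cdot\CL(\BBone)(x)$, itself a consequence of the uniform distortion), and you are right that $C_A$ is independent of $n_0$. The paper's route is more elementary---no spectral theory for the truncated operator, just an explicit entropy computation---and exploits the formula~\eqref{eq-defzc} for $Z_c$ head-on; yours is more structural and would adapt more readily to situations where one does not have such a clean expression for $Z_c$.
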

\begin{proof}
Let $n$ be an integer, and consider the finite vertical bands $\CV_{i}$ associated to the return time $\tau=n$ in the mille-feuilles $\CM_{0}$. 
The Markov intersection yields in each such band the existence and uniqueness of a $\phi$ periodic  point with period (exactly) equal to $n$. 

Denote by $M_{n,j}$ such periodic point and $K_{n,j}$ the associated cylinder. We set $A_{n,j}:=A(M_{n,j})$ and consider the Bernoulli measure\footnote{for the dynamics of $\phi$.} $\mu_{n}$ with weight 
$$p_{n,j}:=\frac{e^{A_{n,j}}}{\sum_{j}e^{A_{n,j}}}.$$
Bernoulli means here $\disp\mu_{n}(\cap_{j=0}^{p}\phi^{-j}\CV_{i_{j}})=\prod_{j=0}^{p}p_{n,i_{j}}$.

Therefore, $\disp h_{\mu_{n}}(\phi)=-\sum p_{n,j}\log p_{n,j}=-\sum_{j}\frac{A_{n,j}e^{A_{n,j}}}{\sum_{i}e^{A_{n,i}}}+\log\sum_{j}e^{A_{n,j}}$. 
Lemma \ref{lem-bongamma}, shows 
$\disp\int A\,d\mu_{n}=\pm C_{A}+\sum_{j}A_{n,j}p_{n,j}$,
and then 
$$h_{\mu_{n}}(\phi)+\int A\,d\mu_{n}=\log\left(\sum_{j}e^{A_{n,j}}\right)\pm C_{A}.$$
Note that $\disp\int \tau\,d\mu_{n}=n$, hence there exists a $F$-invariant probability $m_{n}$ such that 
$$\mu={\pi_{1}}_{*}\left(\frac{m_{n}(.\cap \CM_{0})}{m_{n}(\CM_{0})}\right).$$
Furthermore $m_{n}(\CM_{0})=\disp\frac1n$. Consequently the free energy of $A_{0}$\footnote{Remember that $A$ is up to a coboundary for the first return into $\CM_{0}$ the induced potential $S_{\tau(.)}(A_{0})(.)$} for $m_{n}$ satisfies
$$\CP(A_{0},\CM_{0})\ge h_{m_{n}}(F)+\int A_{0}\,dm_{n}=\frac1n\log\left(\sum_{j}e^{A_{n,j}}\right)\pm \frac1nC_{A}.$$
This yields: 
\begin{equation}
\label{equ-majozc}
\frac1n\log\left(\sum_{j}e^{A_{n,j}}\right)\le \CP(A_{0},\CM_{0})+\frac1nC_{A}.
\end{equation}

On the other hand, let $x$ be any point in $[P_{l},P_{r}]$. There exists a unique preimage $\xi_{j}$ of $x$ in $\CK_{i}$ (for $\phi$). We recall Equality \eqref{eq-defzc} with these notations: 
$$Z_{c}=\limsup_{n\to+\8}\frac1n\log\left(\sum_{j}e^{A(\xi_{j})}\right).$$
Lemma \ref{lem-bongamma} shows that each $A(\xi_{j})$ can be replaced by $A(M_{n,j})=A_{n,j}$.

Hence, considering a subsequence of $n$'s such that $\disp\frac1n\log\left(\sum_{j}e^{A_{n,j}}\right)$ converges to $Z_{c}$ and doing $n\to +\8$, Inequality \eqref{equ-majozc} shows that  $Z_{c}\le \CP(A_{0},\CM_{0})$ holds. 
\end{proof}

\subsubsection{Existence and uniqueness for relative equilibrium}\label{subsubsec-relatequilmille}
Here we prove Proposition \ref{prop-maxmZ} and state condition yielding existence or not of a relative equilibrium state for $A_{0}$ associated to the mille-feuilles $\CM_{0}$.

Let $A_{0}$  and $\CM_{0}$ be as above.
Set 
\begin{equation}
\label{eq-def-cpbeta}
\CP(\be,\CM_{0}):=\sup\left\{h_{m}(F)+\int A_{0}\,dm+\be.m(\CM_{0}),\ m(\CM_{0})>0\right\}.
\end{equation}

\newcommand{\clo}{\CL_{Z}}
\newcommand{\bbone}{\BBone_{[P_{l},P_{r}]}}

We show  a relation between $\be$ and $Z$. 
\begin{lemma}
\label{lem-relbetaZ}
For $Z>Z_{c}$, $m_{Z}$ realizes the maximum in Equality \eqref{eq-def-cpbeta} with $\be:=-\log\lambda_{Z}$. This maximal value is equal to $Z$. 
\end{lemma}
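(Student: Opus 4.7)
The plan is to first verify that $m_{Z}$ actually attains the value $Z$, and then use the Abramov--Kac induction to reduce the supremum in \eqref{eq-def-cpbeta} to a supremum computed on $\CM_{0}$ for the induced map $\Phi$, where the variational principle for the potential $A - Z\tau$ with pressure $\log\lambda_{Z}$ applies.

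For the first step, just plug $\be = -\log\lambda_{Z}$ into \eqref{equ-pressZgrand}:
\[
h_{m_{Z}}(F) + \int A_{0}\,dm_{Z} + \be\cdot m_{Z}(\CM_{0}) = Z + m_{Z}(\CM_{0})\log\lambda_{Z} - m_{Z}(\CM_{0})\log\lambda_{Z} = Z.
\]
So $m_{Z}$ realizes the announced value, and the content of the lemma is that no other $F$-invariant probability $m$ with $m(\CM_{0})>0$ beats this value.

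For the upper bound, let $m$ be any $F$-invariant probability with $m(\CM_{0})>0$. Set $\wh\mu := m(\,\cdot\cap\CM_{0})/m(\CM_{0})$, which is $\Phi$-invariant. By Kac's formula $\int\tau\,d\wh\mu = 1/m(\CM_{0}) < +\8$, so Abramov's formula yields
\[
h_{m}(F) = m(\CM_{0})\,h_{\wh\mu}(\Phi) \quad\text{and}\quad \int A_{0}\,dm = m(\CM_{0})\int S_{\tau}^{F}(A_{0})\,d\wh\mu.
\]
Now invoke the coboundary identity \eqref{equ1-cobord}: $S_{\tau}^{F}(A_{0})(x,y) = A(x) + \omega\circ\Phi(x,y) - \omega(x,y)$. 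Once I check that $\omega$ is $\wh\mu$-integrable (which follows from the H\"older regularity of $A_{0}$ and the exponential contraction of $F$ along verticals, exactly the ingredient used in the proof of Lemma \ref{lem-bongamma}), the coboundary integrates to zero and $\int S_{\tau}^{F}(A_{0})\,d\wh\mu = \int A\,d\wh\mu$. Therefore
\[
h_{m}(F)+\int A_{0}\,dm + \be\, m(\CM_{0}) = m(\CM_{0})\Bigl[h_{\wh\mu}(\Phi) + \int A\,d\wh\mu + \be\Bigr].
\]

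The last step applies the variational principle at level of the induced map $\Phi$ on $\CM_{0}$. Since $Z>Z_{c}$, the results imported from \cite{leplaideur1} (summarized in Subsection \ref{subsubsec-thermorenaud}) tell us that $\log\lambda_{Z}$ is the topological pressure of $A - Z\tau$ for $\phi$ (and therefore for $\Phi$, which is the natural extension). Consequently, for any $\Phi$-invariant probability $\wh\mu$,
\[
h_{\wh\mu}(\Phi) + \int A\,d\wh\mu - Z\int\tau\,d\wh\mu \;\le\; \log\lambda_{Z} = -\be.
\]
Rearranging and multiplying by $m(\CM_{0}) = 1/\int\tau\,d\wh\mu$ yields
\[
h_{m}(F) + \int A_{0}\,dm + \be\,m(\CM_{0}) \;\le\; Z,
\]
with equality exactly when $\wh\mu$ is the unique equilibrium state $\wh\mu_{Z}$ for $A - Z\tau$, i.e.\ when $m = m_{Z}$. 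This proves both the value of the supremum and that $m_{Z}$ realizes it.

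The main obstacle I expect is the bookkeeping around the coboundary term $\omega$: one has to justify that $\omega$ is integrable against every $\wh\mu$ with $m(\CM_{0})>0$, and that ``$\int A_{0}\,dm = m(\CM_{0})\int S_{\tau}^{F}(A_{0})\,d\wh\mu$'' really holds as stated (which is the standard Abramov formula applied to the induced potential). Everything else is a direct combination of the formula \eqref{equ-pressZgrand}, Kac's lemma, and the variational principle for $\Phi$ with parameter $Z > Z_{c}$.
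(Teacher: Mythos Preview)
Your proof is correct and follows essentially the same route as the paper's: induce to $\wh\mu$ on $\CM_{0}$, invoke the variational principle for the induced potential with pressure $\log\lambda_{Z}$, and unwind via Abramov/Kac. The only difference is that the paper applies the variational inequality directly to the potential $S_{\tau}^{F}(A_{0})-Z\tau$ (for which $\wh\mu_{Z}$ is already declared the equilibrium state), so your detour through the coboundary \eqref{equ1-cobord} and the attendant worry about integrability of $\omega$ are unnecessary.
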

\begin{proof}
Pick $Z>Z_{c}$.
Set $\be:=-\log\lambda_{Z}$ and consider some measure $m$, $F$-invariant such that $m(\CM_{0})>0$. We denote by $\wh\mu$ the conditional measure 
$$\wh\mu:=\frac{m(.\cap\CM_{0})}{m(\CM_{0})}.$$
It is a $\Phi$-invariant probability. 

We recall that $\wh\mu_{Z}$ is the local equilibrium state for $(\CM_{0},\Phi)$ and the potential $S_{\tau(.)}^{F}(A_{0})-Z\tau(.)$. This yields
\begin{eqnarray*}
h_{\wh\mu}(\Phi)+\int S_{\tau(\xi)}^{F}(A_{0})(\xi)-Z.\tau(\xi)\,d\wh\mu&\le &\log\lambda_{Z}=-\be\\
&&\text{ with equality iff }\wh\mu=\wh\mu_{Z}\\
&\Updownarrow&\\
 h_{m}(F)+\int A_{0}\,dm+\be.m(\CM_{0})&\le & Z\\
&&\text{ with equality iff }m=m_{Z}.
\end{eqnarray*}
\end{proof}

The main consequence of Lemma \ref{lem-relbetaZ} is the formula 
\begin{equation}
\label{equ-egalcpbe-Z}
\CP(-\log\lambda_{Z},\CM_{0})=Z.
\end{equation}
We let the reader check that $\be\mapsto \CP(\be,\CM_{0})$ is increasing and convex, thus continuous. Moreover, $Z\mapsto \log\lambda_{Z}$ is decreasing. Therefore $\lim_{Z\downarrow Z_{c}}\log\lambda_{Z}$ exists. Let us denote it by $-\be_{c}\le +\8$. Hence, Equality \eqref{equ-egalcpbe-Z} also holds for $Z=Z_{c}$ and $-\be_{c}$ instead of $\log\lambda_{Z}$. 
We recall that by Proposition \ref{prop-estizc}, $Z_{c}\le \CP(A_{0},\CM_{0})=\CP(0,\CM_{0})$. Hence we have 
 $$\CP(\be_{c},\CM_{0})=Z_{c}\le \CP(0,\CM_{0}),$$
 which implies $\be_{c}\le 0$. In other words, 
 \begin{equation}
\label{equ-limloglambdaZ}
\lim_{Z\downarrow Z_{c}}\log\lambda_{Z}\ge 0.
\end{equation}

We let the reader check the next technical result. 
\begin{lemma}
\label{lem-convexite}
Let $\varphi$ be a convex and decreasing continuously differentiable function defined on $]a,b[$. Then $\psi:z\mapsto z-\disp\frac{\varphi(z)}{\varphi'(z)}$ attains its maximal value, either on  the unique $z_{0}$ such that $\varphi(z_{0})=0$ or on $a$ (by continuity) if $\varphi<0$ on $]a,b[$. 
\end{lemma}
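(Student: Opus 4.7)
The geometric key is that $\psi(z)$ is exactly the $x$-intercept of the tangent line to the graph of $\varphi$ at $(z,\varphi(z))$: writing $T_{z}(w):=\varphi(z)+\varphi'(z)(w-z)$, one checks $T_{z}(\psi(z))=0$. Convexity of $\varphi$ is equivalent to $\varphi(w)\ge T_{z}(w)$ for every $w,z\in\,]a,b[$, and decreasingness gives $\varphi'(z)\le 0$; one may assume strict decrease, so $\varphi'(z)<0$, since any interval of constancy reduces the lemma to a trivial subcase. The two alternatives in the statement correspond to whether $\varphi$ has a zero in $]a,b[$ or remains strictly negative throughout.

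\noindent\textbf{Case 1: $\varphi(z_{0})=0$ for some $z_{0}\in\,]a,b[$.} I would evaluate the convexity inequality at $w=z_{0}$, namely $0=\varphi(z_{0})\ge \varphi(z)+\varphi'(z)(z_{0}-z)$, rearrange to $\varphi'(z)(z_{0}-z)\le-\varphi(z)$, and divide by $\varphi'(z)<0$ (for $z\neq z_{0}$). The inequality reverses and gives $z_{0}-z\ge -\varphi(z)/\varphi'(z)$, i.e.\ $\psi(z)\le z_{0}$. Since $\psi(z_{0})=z_{0}$, the maximum equals $z_{0}$ and is attained at $z_{0}$.

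\noindent\textbf{Case 2: $\varphi<0$ on $]a,b[$.} The plan is to show that $\psi$ is non-increasing; then $\sup_{]a,b[}\psi=\lim_{z\downarrow a}\psi(z)$, which is what ``attained on $a$ by continuity'' means. Fix $z_{1}<z_{2}$ in $]a,b[$ and set $\Delta:=z_{2}-z_{1}$. Convexity with $w=z_{1}$ and pivot $z_{2}$ reads $\varphi(z_{1})\ge\varphi(z_{2})-\Delta\,\varphi'(z_{2})$, which I would rewrite as $\varphi(z_{2})\le\varphi(z_{1})+\Delta\,\varphi'(z_{2})$. Multiplying by $\varphi'(z_{1})<0$ flips the inequality and a direct rearrangement yields
\[
\varphi(z_{2})\varphi'(z_{1})-\varphi(z_{1})\varphi'(z_{2})-\Delta\,\varphi'(z_{1})\varphi'(z_{2})\;\ge\; \varphi(z_{1})\bigl(\varphi'(z_{1})-\varphi'(z_{2})\bigr).
\]
The right-hand side is nonnegative since $\varphi(z_{1})<0$ and $\varphi'(z_{1})\le\varphi'(z_{2})$ (convexity makes $\varphi'$ non-decreasing). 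Dividing by the positive quantity $\varphi'(z_{1})\varphi'(z_{2})$ delivers $\varphi(z_{2})/\varphi'(z_{2})-\varphi(z_{1})/\varphi'(z_{1})\ge\Delta$, i.e.\ $\psi(z_{2})\le\psi(z_{1})$, which is the desired monotonicity.

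\noindent\textbf{Main obstacle.} Under a $\CC^{2}$ hypothesis one has the one-line identity $\psi'(z)=\varphi(z)\varphi''(z)/\varphi'(z)^{2}$, whose sign would immediately be controlled by the sign of $\varphi$; this is not available with only $\CC^{1}$ regularity, so the monotonicity of $\psi$ in Case 2 must be obtained by the algebraic manipulation above, combining both convexity inequalities between $z_{1}$ and $z_{2}$ together with the monotonicity of $\varphi'$. The remaining care is to handle the points where $\varphi'(z)=0$, which, under the implicit strict-decrease assumption, do not occur and the formula defining $\psi$ makes sense throughout $]a,b[$.
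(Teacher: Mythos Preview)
Your argument is correct. The tangent-intercept interpretation together with the convexity inequality $\varphi(w)\ge\varphi(z)+\varphi'(z)(w-z)$ handles Case~1 in one line, and your algebraic manipulation in Case~2 is sound: the rearrangement to
\[
\varphi(z_{2})\varphi'(z_{1})-\varphi(z_{1})\varphi'(z_{2})-\Delta\,\varphi'(z_{1})\varphi'(z_{2})\;\ge\;\varphi(z_{1})\bigl(\varphi'(z_{1})-\varphi'(z_{2})\bigr)\ge 0
\]
checks out, and dividing by $\varphi'(z_{1})\varphi'(z_{2})>0$ indeed yields $\psi(z_{1})\ge\psi(z_{2})$.

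As for comparison with the paper: there is nothing to compare, since the paper explicitly leaves this lemma to the reader (``We let the reader check the next technical result''). Your treatment is more than sufficient to fill that gap. The only cosmetic point is that the reduction to the strictly decreasing case could be stated more directly: if $\varphi'(z)=0$ anywhere then $\psi$ is undefined there, so the very formulation of the lemma already presupposes $\varphi'<0$ throughout.
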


Equality \eqref{equ-derivloglambda} can be re-written 
\begin{equation}
\label{equ-mzmax}
h_{m_{Z}}(F)+\int A_{0}\,dm_{Z}=Z-\frac{\log\lambda_{Z}}{\frac{d\log\lambda_{Z}}{dZ}},
\end{equation}
setting $\varphi(Z):=\log\lambda_{Z}$. As $\lim_{Z\to+\8}\varphi(Z)=-\8$, Equality \eqref{equ-limloglambdaZ} shows that either there exists a unique $Z>Z_{c}$ such that $\lambda_{Z}=1$ (\ie  $\ \varphi(Z)=0$), or $\lim_{Z\downarrow Z_{c}}\lambda_{Z}=1^{-}$. 

Note that  the function $\psi$ in Lemma \ref{equ-mzmax} is continuous, thus 
$$\CP(A_{0},\CM_{0})=\CP(0,\CM_{0})=\sup\{h_{m_{Z}}(F)+\int A_{0}\,dm_{Z},\ \lambda_{Z}<1\}.$$
This finishes the proof of Proposition \ref{prop-maxmZ}.


%

%
%
%
%
%

\bigskip
Now, we state conditions yielding existence uniqueness and/or non-existence of a relative equilibrium state associated to $\CM_{0}$. 

\begin{theorem}
\label{theo-equil-CM0}
 Then, there exists  a relative equilibrium state for $A_{0}$ associated to $\CM_{0}$ if and only if the right derivative  for $\be\mapsto \CP(\be,\CM_{0})$ at $0$ is positive. 
\end{theorem}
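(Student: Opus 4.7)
The plan is to translate the question into a statement about the convex, analytic function $\varphi(Z):=\log\lambda_Z$ defined on $]Z_c,+\infty[$, exploiting the duality already recorded in Lemma~\ref{lem-relbetaZ} and Equation~\eqref{equ-egalcpbe-Z}: for $\beta>\beta_c$, where $\beta_c:=-\lim_{Z\downarrow Z_c}\log\lambda_Z\in[-\infty,0]$ by \eqref{equ-limloglambdaZ}, the map $\beta\mapsto \CP(\beta,\CM_0)$ agrees with the inverse function $Z(\beta)$ of $\beta\mapsto -\log\lambda_Z$. Combined with \eqref{equ-derivloglambda}, this gives the explicit formula
$$\CP'(0^+,\CM_0)\;=\;\lim_{\beta\downarrow 0}m_{Z(\beta)}(\CM_0)\;=\;\lim_{Z\downarrow Z(0^+)}m_Z(\CM_0),$$
which is the bridge between the geometric quantity (right derivative) and the thermodynamic quantity (mass on $\CM_0$ of the candidate equilibrium states $m_Z$).

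For the necessary direction, suppose $m_0$ is a relative equilibrium state associated to $\CM_0$. Then $m_0(\CM_0)>0$ and, because $\CP(\beta,\CM_0)$ is defined in \eqref{eq-def-cpbeta} as the supremum over $m$ of the affine maps $\beta\mapsto h_m(F)+\int A_0\,dm+\beta\,m(\CM_0)$, the lower bound $\CP(\beta,\CM_0)\ge \CP(0,\CM_0)+\beta\,m_0(\CM_0)$ holds for every $\beta>0$. Dividing by $\beta$ and letting $\beta\downarrow 0$ gives $\CP'(0^+,\CM_0)\ge m_0(\CM_0)>0$.

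For the sufficient direction, assume $\CP'(0^+,\CM_0)>0$ and split on the sign of $\beta_c$. If $\beta_c<0$, then by strict monotonicity and analyticity of $\varphi$ on $]Z_c,+\infty[$, there exists a unique $Z_0:=Z(0)>Z_c$ with $\lambda_{Z_0}=1$; Lemma~\ref{lem-relbetaZ} applied at $\beta=0$ then says that the measure $m_{Z_0}$ constructed in Section~\ref{subsec-reduc1dim} realizes $\CP(0,\CM_0)$, so $m_{Z_0}$ is the sought relative equilibrium state (with $m_{Z_0}(\CM_0)>0$ by \eqref{equ-derivloglambda}). If instead $\beta_c=0$, i.e.\ $\lim_{Z\downarrow Z_c}\lambda_Z=1^-$, the hypothesis $\CP'(0^+,\CM_0)>0$ rewrites as $\lim_{Z\downarrow Z_c}m_Z(\CM_0)>0$, equivalently $\lim_{Z\downarrow Z_c}\varphi'(Z)>-\infty$ via \eqref{equ-derivloglambda}. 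By the formula $\int\tau\,d\nu_Z=-e^{\pm C_A}\partial_Z\CL_Z(\BBone)(x)$ noted in Section~\ref{subsubsec-thermorenaud}, this precisely asserts the convergence of $\partial_Z\CL_Z(\BBone)$ at $Z=Z_c$, together with convergence of $\CL_{Z_c}(\BBone)$ itself. The extension remarks closing Sections~\ref{subsubsec-thermorenaud} and \ref{subsubsec-relatequilmille} then yield a well-defined $F$-invariant measure $m_{Z_c}$ with $m_{Z_c}(\CM_0)>0$, which by Lemma~\ref{lem-relbetaZ} (extended to $Z=Z_c$) is a relative equilibrium state associated to $\CM_0$.

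The main obstacle is the boundary case $\beta_c=0$: one must promote the one-sided positivity of $\CP'$ to the integrability condition $\int\tau\,d\nu_{Z_c}<+\infty$ and then invoke the delicate extension of the Ionescu--Tulcea--Marinescu machinery from $Z>Z_c$ down to the critical value $Z=Z_c$. Everything outside of this boundary analysis reduces to convex bookkeeping encoded in Lemma~\ref{lem-convexite} and Equation~\eqref{equ-mzmax}.
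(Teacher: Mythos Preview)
Your necessary direction is identical to the paper's. For the sufficient direction, however, you take a genuinely different route.

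The paper argues softly: it picks near-maximizers $m_{\eps,\beta}$ of $\CP(\beta,\CM_0)$ for small $\beta>0$, uses the assumed positive right derivative together with convexity to force $m_{\beta^2,\beta}(\CM_0)\ge\alpha/2$ uniformly, extracts a weak accumulation point $m$ as $\beta\to0$, checks $m(\CM_0)>0$ via compactness of $\CM_0$, and concludes by upper semi-continuity of entropy (Corollary~\ref{cor-entropy-scs}). No case split on $\beta_c$ is needed, and nothing about the transfer operator at $Z_c$ is invoked.

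Your approach is structural: you exploit the explicit family $(m_Z)_{Z>Z_c}$ and the $Z\leftrightarrow\beta$ duality of Lemma~\ref{lem-relbetaZ} and \eqref{equ-derivloglambda} to identify the equilibrium state constructively as $m_{Z_0}$ (when $\beta_c<0$) or $m_{Z_c}$ (when $\beta_c=0$). This is more informative---it pinpoints \emph{which} measure is the equilibrium and dovetails neatly with the three-case trichotomy the paper lists just after the theorem---but it comes at a cost. In the boundary case $\beta_c=0$ you must promote the finiteness of $\varphi'(Z_c^+)$ to convergence of $\partial_Z\CL_{Z_c}(\BBone)$ and then invoke the extension of the Ionescu--Tulcea--Marinescu machinery and of Lemma~\ref{lem-relbetaZ} down to $Z=Z_c$. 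The paper does assert these extensions (end of \S\ref{subsubsec-thermorenaud} and the two-dimensional subsection), so your reliance on them is legitimate within the paper's framework, but the passage from $\lim_{Z\downarrow Z_c}\int\tau\,d\mu_Z<\infty$ to $\int\tau\,d\nu_{Z_c}<\infty$ deserves one line (uniform bounds $e^{-C_A}\le H_Z\le e^{C_A}$ plus monotone convergence of $\sum_\xi\tau(\xi)e^{A(\xi)-\tau(\xi)Z}$ as $Z\downarrow Z_c$). The paper's compactness argument sidesteps all of this entirely.
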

\begin{proof}
The function $\be\mapsto \CP(\be,\CM_{0})$ is non-decreasing and convex. It thus admits a left and right derivative at any point. They are non-negative.

Let us first assume that there exists some relative equilibrium state for $A_{0}$ associated to $\CM_{0}$, say $m$. 
Then, for every $\be>0$, $\CP(\be,\CM_{0})\ge \CP(A_{0},\CM_{0})+\be.m(\CM_{0})$. This yields that the right derivative for $\CP(\be,\CM_{0})$ at $0$ is at least $m(\CM_{0})>0$.

Let us prove the converse. Assume that the left derivative at $0$ is $\al>0$.
For every $\eps>0$ and $\be>0$, we can find $m_{\eps,\be}$ such that $m_{\eps,\be}(\CM_{0})>0$ and 
$$h_{m_{\eps,\be}}(F)+\int A_{0}\,dm_{\eps,\be}+\be.m_{\eps,\be}(\CM_{0})>\CP(\be,\CM_{0})-\eps.$$
The assumption on the left derivative and the convex property yield
$$\CP(\be,\CM_{0})\ge \CP(0,\CM_{0})+\be\al.$$
Hence, we can choose $\eps=\be^{2}$ and assume that $\be$ is sufficiently small such that 
\begin{equation}
\label{equ1-minopression}
h_{m_{\eps,\be}}(F)+\int A_{0}\,dm_{\eps,\be}+\be.m_{\eps,\be}(\CM_{0})>\CP(0,\CM_{0})+\be\frac\al2
\end{equation}
holds.  Now, by definition $\disp h_{m_{\eps,\be}}(F)+\int A_{0}\,dm_{\eps,\be}\le \CP(0,\CM_{0})=\CP(A_{0},\CM_{0})$. Thus we get for every $\be>0$
$$\be m_{\be^{2},\be}(\CM_{0})\ge \be\frac\al2.$$
Therefore, for every sufficiently small positive $\be$, $m_{\be^{2},\be}(\CM_{0})>\frac\al2$. 

\medskip
Let us consider any accumulation point $m$ for $m_{\be^{2},\be}$ as $\be\to0$. We claim that $m(\CM_{0})$ is positive. Indeed, either $m(\partial\CM_{0})=0$ and then a standard computation yields
$$m(\CM_{0})\ge \frac\al2,$$
or $m(\partial\CM_{0})>0$ and then $m(\CM_{0})>0$ because $\CM_{0}$ is compact, thus $\partial\CM_{0}\subset\CM_{0}$.

We have seen above (see Corollary \ref{cor-entropy-scs}) that the entropy is upper semi-continuous. Therefore,  doing $\be\to0$, \eqref{equ1-minopression} implies
$$h_{m}(F)+\int A_{0}\,dm\ge \CP(A_{0},\CM_{0}).$$
Hence,  $m$ is a relative equilibrium state for $A_{0}$ associated to $\CM_{0}$. 
\end{proof}

Consequently, and according to Subsubsection \ref{subsubsec-thermorenaud} we get 3 possible cases (see Fig. \ref{fig-3cases}):
\begin{enumerate}
\item $Z_{c}<\CP(0,\CM_{0})$ and $m_{Z}$ with $Z=\CP(0,\CM_{0})$ is the unique relative equilibrium state for $A_{0}$ associated to $\CM_{0}$. In that case $\lambda_{Z}=1$. 

\item $Z_{c}=\CP(0,\CM_{0})$ and $\disp\left|\frac{d\log\lambda_{Z}}{dZ}\right|<+\8$. Then, $m_{Z}$ with $Z=Z_{c}$ is the unique relative equilibrium state for $A_{0}$ associated to $\CM_{0}$. In that case $\lambda_{Z_{c}}=1$. 

\item $Z_{c}=\CP(0,\CM_{0})$ and $\disp\left|\frac{d\log\lambda_{Z}}{dZ}\right|=+\8$. Then, there is no relative equilibrium state for $A_{0}$ associated to $\CM_{0}$. In that case $\lambda_{Z_{c}}=1$. 
\end{enumerate}

\begin{figure}[htbp]
\begin{center}
\includegraphics[scale=0.5]{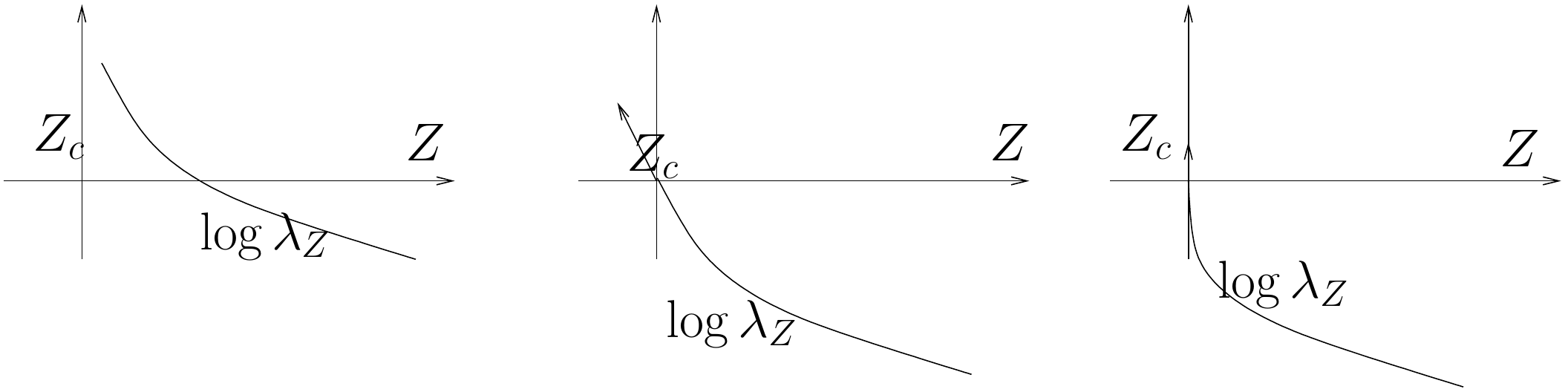}
\caption{3 possibles cases}
\label{fig-3cases}
\end{center}
\end{figure}

\subsection{Existence and uniqueness of the relative equilibrium among good measures}

\begin{definition}
\label{def-fatmeas}
We say that a $F$-invariant measure $m$ is fat if every mille-feuilles has positive weight.
\end{definition}

\begin{proposition}
\label{prop-maxfat}
The supremum of the free energies among good measures is also the supremum  among fat measures:
$$\sup\left\{h_{m}(F)+\int A_{0}\,dm,\ m\text{ is good}\right\}=\sup\left\{h_{m}(F)+\int A_{0}\,dm,\ m\text{ is fat}\right\}.$$
\end{proposition}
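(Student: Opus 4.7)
The proposition has two inequalities; I prove them separately. The direction $\sup_{m\ \mathrm{good}}(h+\int A_0\,d\cdot) \le \sup_{m\ \mathrm{fat}}(h+\int A_0\,d\cdot)$ is obtained by perturbing a good measure with a reference fat measure, while the reverse direction uses ergodic decomposition together with the dichotomy Proposition \ref{prop-cuplambdanergo} and contains the only delicate step.

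For the easy direction, I first construct a reference fat measure $\nu^{*}$. The family of mille-feuilles is countable (parametrized by pairs of consecutive points of $\wh\delta$-dense $f$-periodic orbits for $\wh\delta\in\{1/k\}_{k\ge 1}$, and $f$-periodic orbits themselves form a countable family); enumerate the mille-feuilles as $\{\CM_{0}^{(i)}\}_{i\ge 1}$. For each $i$, the proof of Lemma \ref{lem-dense-interval} exhibits a $\Phi$-periodic point in $\CM_{0}^{(i)}$, and the ergodic Dirac-average $\nu_{i}$ on its $F$-orbit satisfies $\nu_{i}(\CM_{0}^{(i)})>0$. Then $\nu^{*}:=\sum_{i\ge 1}2^{-i}\nu_{i}$ is $F$-invariant with $\nu^{*}(\CM_{0}^{(i)})\ge 2^{-i}\nu_{i}(\CM_{0}^{(i)})>0$ for every $i$, hence fat, and has finite free energy since entropy is bounded by topological entropy and $A_{0}$ is bounded. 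For any good ergodic $\mu$, the convex combination $m_{\varepsilon}:=(1-\varepsilon)\mu+\varepsilon\nu^{*}$ is fat for every $\varepsilon\in(0,1]$, and by affinity of entropy and of integration,
\[
h_{m_{\varepsilon}}(F)+\int A_{0}\,dm_{\varepsilon}=(1-\varepsilon)\left(h_{\mu}(F)+\int A_{0}\,d\mu\right)+\varepsilon\left(h_{\nu^{*}}(F)+\int A_{0}\,d\nu^{*}\right),
\]
which tends to $h_{\mu}(F)+\int A_{0}\,d\mu$ as $\varepsilon\to 0$. Taking suprema yields the first inequality.

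For the reverse inequality, let $m$ be fat and ergodically decompose $m=\int\mu\,dP(\mu)$. Every mille-feuilles is contained in $\cup\Lambda_{n}$ (its leaves are unstable manifolds graphing over $[P_{l},P_{r}]$, so each point of a leaf belongs to $\Lambda_{n}$ for $n$ large with respect to $(P_{r}-P_{l})^{-1}$), hence $m(\cup\Lambda_{n})>0$. Proposition \ref{prop-cuplambdanergo} then gives the dichotomy $\mu(\cup\Lambda_{n})\in\{0,1\}$ for $P$-a.e.\ component, so the decomposition splits as $m=\alpha\,m_{G}+(1-\alpha)\,m_{B}$ with $\alpha>0$, where $m_{G}$ (resp.\ $m_{B}$) is the normalized good (resp.\ bad) part. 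Affinity of free energy gives
\[
h_{m}(F)+\int A_{0}\,dm=\alpha\left(h_{m_{G}}(F)+\int A_{0}\,dm_{G}\right)+(1-\alpha)\left(h_{m_{B}}(F)+\int A_{0}\,dm_{B}\right),
\]
and the good contribution is bounded by $\sup_{\mathrm{good}}(h+\int A_{0}\,d\cdot)$ by averaging.

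The principal obstacle is to bound the bad contribution $h_{m_{B}}(F)+\int A_{0}\,dm_{B}$ by $\sup_{\mathrm{good}}(h+\int A_{0}\,d\cdot)$. The key observation is that bad ergodic components satisfy $\mu_{B}(\CM_{0})=0$ for every mille-feuilles (since $\CM_{0}\subset\cup\Lambda_{n}$), so the Markov-induction machinery of Section \ref{sec-inducedmap} is blind to $m_{B}$; in particular Abramov's formula combined with \eqref{equ1-cobord} yields the clean identity $h_{m_{G}}(F)+\int A_{0}\,dm_{G}=m_{G}(\CM_{0})(h_{\wh\mu}(\Phi)+\int A\,d\wh\mu)$ with $\wh\mu=m(\cdot\cap\CM_{0})/m(\CM_{0})$, which Proposition \ref{prop-maxmZ} bounds by $\CP(A_{0},\CM_{0})$, but this controls only the good side. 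To close the gap I would approximate each bad ergodic $\mu$ in the weak-$*$ topology by ergodic good measures produced as empirical averages along long orbit segments staying inside $\cup\Lambda_{n}$, using its density in $\Lambda$ (Proposition \ref{prop-cuplambdandense}) and pieces of the SRB measure constructed there. Upper semi-continuity of the entropy (Corollary \ref{cor-entropy-scs}) goes the wrong way to transfer the bound directly, so the technical heart of the argument is to choose the approximants $\mu_{k}$ carefully enough that $h_{\mu_{k}}(F)+\int A_{0}\,d\mu_{k}\ge h_{\mu}(F)+\int A_{0}\,d\mu-o(1)$; this is the principal difficulty. Once it is established, $h_{m_{B}}(F)+\int A_{0}\,dm_{B}\le\sup_{\mathrm{good}}(h+\int A_{0}\,d\cdot)$ follows and the proof is complete.
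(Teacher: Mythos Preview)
Your argument for $\sup_{\mathrm{good}}\le\sup_{\mathrm{fat}}$ via convex combination with a reference fat measure is fine. The problem is entirely in the other direction, and you have correctly located where your argument breaks: you need to bound the free energy of the bad part $m_{B}$ of a fat measure by $\sup_{\mathrm{good}}$, and your proposed approximation of bad ergodic components by good ones runs into the wrong-way upper semi-continuity of entropy. This is not a technicality that a cleverer choice of approximants will repair; nothing in the paper gives any lower control on the free energy of bad measures, and indeed the appendix explains that bad measures are precisely the ones the induction cannot see.

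The paper sidesteps this difficulty completely by a different route. Instead of comparing the two suprema directly, it shows that the Gibbs-type measures $m_{Z}$ (for $Z>Z_{c}$) built from the induced system on a fixed mille-feuilles $\CM_{0}$ are themselves \emph{fat}: any other mille-feuilles $\CM_{1}$ receives positive $m_{Z}$-measure, by pushing a thin vertical sub-band of $\CM_{0}$ (which has positive $\wh\mu_{Z}$-measure) forward into $\CM_{1}$ via the same transitivity mechanism as in Lemma~\ref{lem-dense-interval}. Since each $m_{Z}$ is ergodic with $m_{Z}(\CM_{0})>0\,$, it is also good. Proposition~\ref{prop-maxmZ} then gives
\[
\CP(A_{0},\CM_{0})=\sup_{Z>Z_{c}}\Bigl(h_{m_{Z}}(F)+\int A_{0}\,dm_{Z}\Bigr)\le\sup_{\mathrm{good}},
\]
while every fat measure satisfies $m(\CM_{0})>0$ and hence has free energy at most $\CP(A_{0},\CM_{0})$. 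This yields $\sup_{\mathrm{fat}}\le\CP(A_{0},\CM_{0})\le\sup_{\mathrm{good}}$ with no reference to bad components whatsoever. The moral: rather than decomposing an arbitrary fat measure, exhibit a single family of measures that is simultaneously good and fat and already realizes the supremum.
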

\begin{proof}
Actually, we shall prove that any measure of the form $m_{Z}$ is fat. Remember that $m_{Z}$ gives positive weight to any open vertical band in $\CM_{0}$. 

Then, we just copy the proof of Lemma \ref{lem-dense-interval}. We pick another mille-feuilles, say $\CM_{1}$ and consider some periodic point inside $\CM_{1}$ (see Figure \ref{fig-fatmz}). 

\begin{figure}[htbp]
\begin{center}
\includegraphics[scale=0.6]{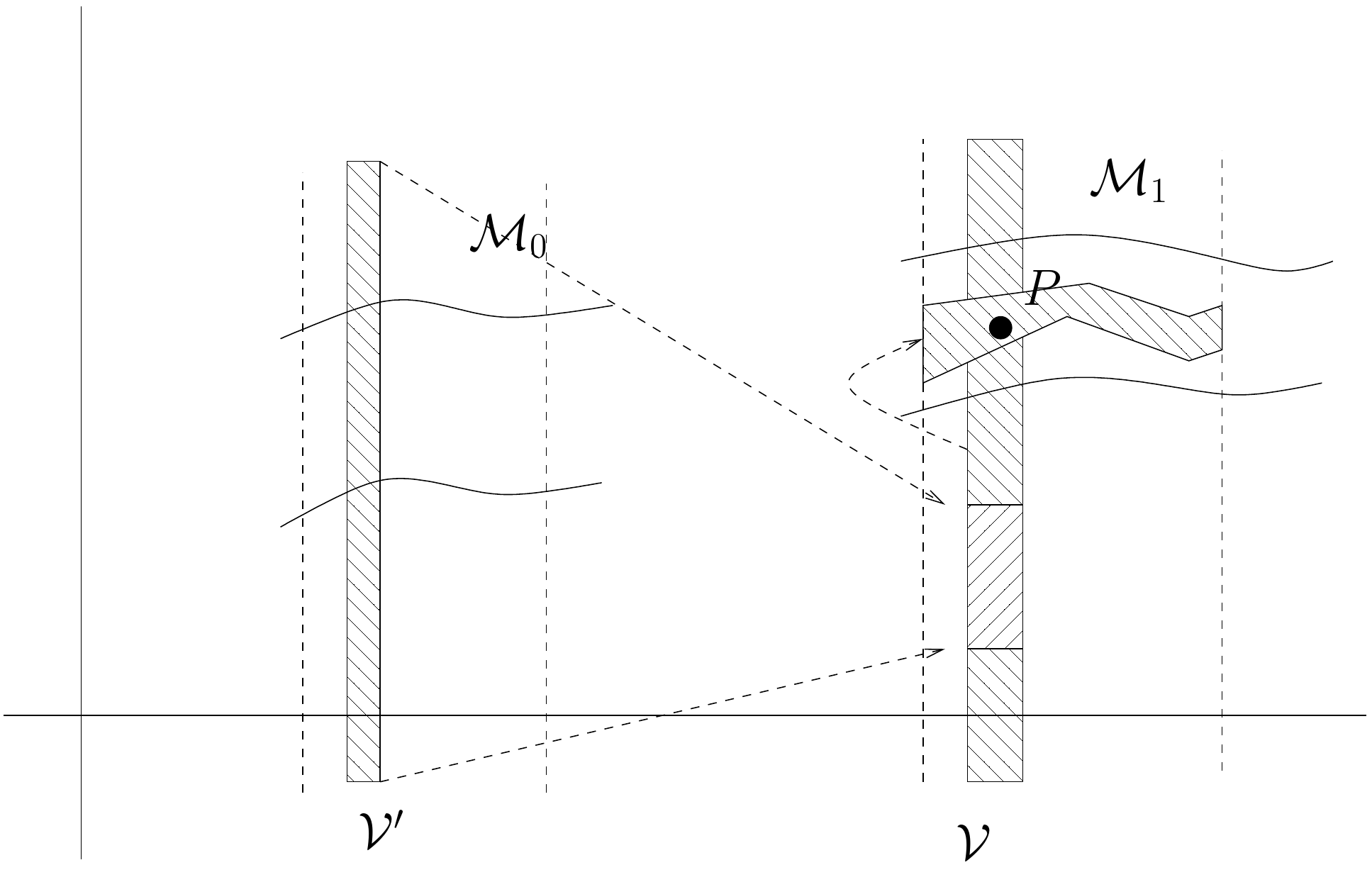}
\caption{$\CM_{1}$ has positive $m_{Z}$-measure}
\label{fig-fatmz}
\end{center}
\end{figure}

Then we consider a very thin vertical band $\CV$ whose size will be specified later. We consider some preimage of the $x$-coordinate of the periodic point which belongs to the interior of the vertical band defining $\CM_{0}$ and such that the vertical band $\CV'$ mapped onto $\CV$ (in the horizontal direction) is also inside the vertical band defining $\CM_{0}$ (here we adjust the size of $\CV$). This vertical band $\CV'$ has positive $\wh\mu_{Z}$-measure, hence positive $m_{Z}$-measure. It is mapped by the good iterate of $F$ into the vertical band $\CV$ and then into some horizontal stripe $\CG$ in $\CM_{1}$. This proves that $\CM_{1}$ has positive $m_{Z}$-measure.

\end{proof}

As an immediate consequence of Proposition \ref{prop-maxfat} we get
\begin{proposition}
\label{prop-samepress}
Let $\CM_{0}$ and $\CM_{1}$ be two mille-feuilles. Then $\CP(A_{0},\CM_{0})=\CP(A_{0},\CM_{1})$.
\end{proposition}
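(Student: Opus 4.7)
The plan is to obtain the equality as an immediate by-product of the two preceding propositions, since the hard work (showing that the natural candidate equilibrium measures give positive mass to \emph{every} mille-feuilles, not just the one used to construct them) has already been done in Proposition \ref{prop-maxfat}. The strategy is a symmetric inclusion argument on the set of admissible test measures in the definitions of $\CP(A_0,\CM_0)$ and $\CP(A_0,\CM_1)$.

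First, by Proposition \ref{prop-maxmZ}, the relative pressure $\CP(A_0,\CM_0)$ can be computed as the supremum of the free energies $h_{m_Z}(F)+\int A_0\,dm_Z$ taken over the one-parameter family of measures $(m_Z)_{Z>Z_c}$ constructed in Subsection \ref{subsec-reduc1dim} via the induced one-dimensional thermodynamic formalism on $\CM_0$. The proof of Proposition \ref{prop-maxfat} established that each such $m_Z$ is \emph{fat}, i.e.\ charges every mille-feuilles positively. In particular, $m_Z(\CM_1)>0$ for every $Z>Z_c$.

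Consequently, each $m_Z$ is an admissible measure in the supremum defining $\CP(A_0,\CM_1)$, which immediately yields
\[
\CP(A_0,\CM_0)=\sup_{Z>Z_c}\left\{h_{m_Z}(F)+\int A_0\,dm_Z\right\}\le \CP(A_0,\CM_1).
\]
Exchanging the roles of $\CM_0$ and $\CM_1$ (the construction of the induction scheme, the critical value $Z_c$, and the eigenmeasures depends on the chosen mille-feuilles, but Proposition \ref{prop-maxfat} applies to any of them) gives the reverse inequality $\CP(A_0,\CM_1)\le \CP(A_0,\CM_0)$, hence equality.

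There is no real obstacle here, as the proposition is essentially a bookkeeping consequence of Proposition \ref{prop-maxfat}; the substantive content — namely that the density-type Markov structure of $\CM_0$ is rich enough to spread mass into any other mille-feuilles $\CM_1$ via pre-images of a periodic point lying in $\CM_1$ — was already carried out there.
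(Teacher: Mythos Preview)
Your proof is correct and follows essentially the same approach as the paper. The paper's version is slightly more compact: it observes directly that both $\CP(A_{0},\CM_{0})$ and $\CP(A_{0},\CM_{1})$ coincide with $\sup\{h_{m}(F)+\int A_{0}\,dm:\ m\text{ is fat}\}$ (since every $m_{Z}$ is fat and every fat measure charges both mille-feuilles), whereas you unpack this into two inequalities and a symmetry argument; the content is identical.
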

\begin{proof}
Any fat measure gives positive weight to any mille-feuilles. Then, 
$$\CP(A_{0},\CM_{0})=\sup\left\{h_{m}(F)+\int A_{0}\,dm,\ m\text{ is fat}\right\}=\CP(A_{0},\CM_{1}).$$
\end{proof}
Then, the description of the 3 possible cases at the end of Subsubsection \ref{subsubsec-relatequilmille}  concludes the proof of Theorem D. 

\appendix
\section{Relative and global equilibrium state}\label{sec-appen}

Even if we presented here a way to be somehow emancipated from the question of global equilibrium state, the question still remains. In particular, when is the relative equilibrium state among good measure also a/the global equilibrium ?

At that point, and to complete the panorama on the thermodynamic formalism for Lorenz map, we mention the existence of two others works on that topic (\cite{juliano-thesis,luciano-thesis}) we are aware on. As far as we know, both also deal with that question of global equilibrium state. 
Nevertheless, they are not yet  published nor exist as preprints. 
For that reason we shall not comment more on these works.

\bigskip
To compare relative and global equilibrium states, one strategy is obviously to find conditions  on the potential ensuring that any equilibrium state must be a good measure. 

Following \cite{buzzi-thermo}, a ``bad'' measure is exactly a measure shadowed by the critical set. Then Lemma 7.3 in \cite{buzzi-thermo} shows that any bad measure must have zero entropy\footnote{the proof is done with $f'(0)=0$ but the main ingredient is $|Df(0)|=+\8$}.

Obviously, a simple condition is to work with potentials such that 
$$\max A_{0}-\min A_{0}<h_{top}(F).$$
This ensures that any equilibrium state must have positive entropy, thus be a good measure. Nevertheless such a condition is very restrictive because it forbids to deal with $\be. A_{0}$, letting $\be\to+\8$.

\medskip
We present here another way to ensures that a relative equilibrium is also a global one.  
If $\mu$ is a bad measure and an equilibrium state for $A_{0}$, then it is also a $A_{0}$-maximizing measure:
$$\int A_{0}\,d\mu=\max\{\int A_{0}\,d\nu\}.$$
Therefore, a way to ensure that the relative equilibrium state among good measures is also a/the  global equilibrium state, is to find conditions on $A_{0}$ yielding that any maximizing measure is a good measure. 

We remind that it is conjectured for uniformly hyperbolic dynamical systems that any generic potential in Lipschitz (or H\"older) norm admits a unique maximizing measure and it has periodic support\footnote{\ie supported on a single periodic orbit}. Obviously any $F$-invariant measure with periodic support is a good measure. 
Therefore, the goal is to find reasonable conditions on $A_{0}$ such that it admits an unique maximizing measure, and it has periodic support. 

It is very easy to construct potentials $A_{0}$ with this property. But then, the question is to describe how big this set of potential is. 
This question cannot be solved so easily, but we can however give some elements of answer. 

We remind (see the proof of Corollary \ref{cor-entropy-scs}) that there is a canonical semi-conjugacy between a subshift $\K$ in $\{0,1\}^{\N}$ (not of finite type) and $([-1,1],f)$. Any point, except the pre-critical orbit   has a unique coding, any preimage of 0 has two. 
Moreover, expansivity yields that the lift of the potential $A_{0}$ in that subshift is again H\"older. 

We remind that H\"older or Lipschitz functions are the same in the shift (up to a change of the definition of the distance). The lifted potential, say $\wt A_{0}$ is only defined on the associated subshift but it can be extended (in a Lipschitz way) such that any maximizing measure for the extension is also a maximizing measure for $\wt A_{0}$. This can be done setting 
$$\wt A_{ext}(x):=\min\{A(y)-d(x,y),\ y\in\K\ d(x,y)=d(x,\K)\}.$$

Then, we use the result in \cite{bousch-walters} saying that generically for the Walters norm (weaker than the Lipschitz but stronger than the $\CC^{0}$ norms) a potential admits a unique maximizing measure and it has periodic support.

Nevertheless, this result does not immediately yields that the set of potentials for $([-1,1],f)$ which admit a unique maximizing and with periodic support measure  is generic in the sense of the Walters norm for $([-1,1],f)$. Indeed a perturbation in the subshift $\K$ may change the potential in such a way that it cannot be seen as the lifted of a potential in $[-1,1]$. However, this result indicates that it is highly probable that potentials with a single maximizing measure and with periodic support form a big set of potentials. For these potentials, there exists a unique global equilibrium state.

%
%
%
%
%
%
%

\bibliographystyle{plain}
\bibliography{mabiblio}

\noindent
\texttt{R. Leplaideur. Laboratoire de Math\'ematiques,
UMR 6205,\\
 Universit\'e de
Bretagne Occidentale,
 6 av. Victor Le Gorgeu, CS 93837, F-29238 BREST Cedex 3\\
  renaud.leplaideur@univ-brest.fr}

\bigskip  
\noindent
\texttt{V. Pinheiro. }  

\end{document}